\documentclass[11pt]{article}

\usepackage{enumitem}
\usepackage{amsmath, amssymb, amsthm}
\usepackage{physics}
\usepackage{graphicx}
\usepackage{subcaption}
\usepackage{xcolor}
\usepackage{tikz}
\usepackage{hyperref}
\usepackage{algorithm, algorithmic}
\usepackage{array}
\usepackage{lmodern}
\usepackage{geometry}
\usepackage{indentfirst}
\usepackage{titlesec}
\usepackage{fancyhdr}
\usepackage{appendix}

\newtheorem{theorem}{Theorem}[section]
\newtheorem{lemma}[theorem]{Lemma}
\newtheorem{definition}[theorem]{Definition}
\newtheorem{proposition}[theorem]{Proposition}
\newtheorem{corollary}[theorem]{Corollary}

\newtheoremstyle{remarkitalic} 
  {}{}                 
  {\normalfont}        
  {}                   
  {\itshape}           
  {.}                  
  { }                  
  {}                   

\theoremstyle{remarkitalic}
\newtheorem{remark}[theorem]{Remark}

\makeatother
\titleformat{\section}
  {\Large\bfseries}
  {\thesection}{0.5em}{}

\hypersetup{
    colorlinks = true,
    linkcolor  = red,
    urlcolor   = black,
    citecolor  = blue
}

\begin{document}
\title{Existence of multi-solitons for logarithmic Schrödinger
equation with a perturbation of a smooth $L^2$--subcritical function nonlinearity} 
	\author{Mohamed Bensaid\\Univ. Lille, CNRS, Inria, UMR 8524 - Laboratoire Paul Painlevé, F-59000 Lille, France\\ \url{mohamed.bensaid@univ-lille.fr}}
\maketitle
\begin{abstract}
    In this paper, we study the nonlinear Schrödinger equation with a nonlinearity combining a focusing logarithmic term and a smooth $L^2$--subcritical function.
    Our main result is the construction of multi-soliton solutions for this equation. These special configurations behave asymptotically, for large times, as the superposition of several solitons moving with distinct velocities. The proof relies on an iterative construction method combined with localized energy estimates adapted to the mixed structure of the nonlinearity. 
\end{abstract}
\section{Introduction }

\subsection{Settings }
We consider the nonlinear Schrödinger equation with a nonlinearity combining a logarithmic term and a smooth $L^2$--subcritical function.
\begin{equation}\label{NLS}\tag{NLSlog-g}
i\partial_t u + \Delta u + \lambda u \log|u|^2+ ug(|u|^2) = 0,
\qquad x \in \mathbb{R}^d,
\end{equation}
where $d \ge 1$ and $\lambda>0$.

This equation is of particular interest because it combines two types of nonlinearities that appear in different physical and mathematical contexts. 
Studying such a combination allows us to extend the known results for purely logarithmic or purely power-type nonlinearities.

In this work, we focus on the construction of multi-soliton solutions for \eqref{NLS} under suitable assumptions on the nonlinearity $g$, in \cite{ben2026}
the Cauchy problem has been shown to be globally well-posed in the Orlicz space $$W_1(\mathbb{R}^d)=\{u\in H^1(\mathbb{R}^d)\mid\abs{u}^2\log \abs{u}\in L^1(\mathbb{R}^d)\}.$$

The equation $\eqref{NLS}$ admits special traveling wave solutions called solitons: given a frequency $\omega$, an
initial phase $\gamma$, an initial position and speed $x,v \in \mathbb{R}^d$, and a solution $\Phi_{\omega} \in H^1(\mathbb{R}^d)$ of
\begin{equation}\label{050}
    \Delta\Phi_{\omega} - \omega\Phi_{\omega} + \lambda \Phi_{\omega} \log \abs{\Phi_{\omega}}^2 + \Phi_{\omega} g(\abs{\Phi_\omega}^2) = 0,
\end{equation}
a soliton solution of \eqref{NLS} traveling along the line $x = x_0 + v t$ is given by
\begin{equation}\label{000}
    Q_{\omega, \gamma, v, x_0}(t, x) := 
\Phi_\omega(x - v t - x_0)
e^{i\left( \frac{1}{2} v \cdot x - \frac{1}{4} |v|^2 t + \omega t + \gamma \right)}.
\end{equation}
The soliton resolution conjecture is a fundamental prediction for nonlinear dispersive equations. It states that any globally defined solution with finite energy asymptotically decomposes into a finite sum of isolated solitons, each moving at a constant velocity, together with a residual dispersive term. In other words, regardless of the initial complexity of the solution, the temporal evolution tends to isolate persistent stable structures (the solitons), while the excess energy is carried away by dispersive radiation.

The construction of multi-soliton solutions is directly motivated by this conjecture. A multi-soliton describes a solution whose long-time behavior is given by a superposition of solitons predicted by the conjecture. More precisely, let $N \geq 2$, 
$\omega_1, \dots, \omega_N \in \mathbb{R}$, 
$\gamma_1, \dots, \gamma_N \in \mathbb{R}$, 
$v_1, \dots, v_N \in \mathbb{R}^d$, 
$x_1, \dots, x_N \in \mathbb{R}^d$, 
and $\Phi_1, \dots, \Phi_N \in H^1(\mathbb{R}^d)$ solutions of the stationary equation~\eqref{050} 
(with $\omega$ replaced by $\omega_1, \dots, \omega_N$).  
Define for $j = 1, \dots, N$ the soliton components
\begin{equation}
    Q_j(t,x) := Q_{ \omega_j, \gamma_j, v_j, x_j}(t,x), 
\qquad 
Q(t,x) := \sum_{j=1}^{N} Q_j(t,x).
\end{equation}
Note that $Q_j$ satisfies
\begin{equation}\label{147}
    -\Delta Q_j
- Q_j \log|Q_j|^2
- Q_j g(|Q_j|^{2})
+\Big(\omega_j+\frac{|v_j|^2}{4}\Big) Q_j
+ i v_j \cdot \nabla Q_j
=0.
\end{equation}
And due to the nonlinearity, $Q(t,x)$ is generally not a solution of \eqref{NLS}. 
A \emph{multi-soliton} is a solution $u$ of \eqref{NLS}, defined on $[\Upsilon_0, +\infty)$ 
for some $\Upsilon_0 \in \mathbb{R}$, such that
$$
\lim_{t \to +\infty} \norm{ u(t) - Q(t) }_{H^1(\mathbb{R}^d)} = 0.
$$
\subsection{Nonlinear Schrödinger equation with power-type nonlinearity}
The idea begins with the classical case where $g$ is a pure power, i.e., 
$g(s) = \mu s^{\sigma}$, with $0 < \sigma < \frac{2}{d-2}$ if $d \geq 3$, 
and $0 < \sigma < \infty$ if $d = 1,2$, and $\lambda=0$. The equation reduces to the nonlinear power-type Schrödinger equation
\begin{equation}\label{NLSP}\tag{NLSP}
i\partial_t u + \Delta u + \mu |u|^{2\sigma}u = 0,
\qquad x \in \mathbb{R}^d.
\end{equation}
When $\mu < 0$, the Cauchy problem is globally well-posed in $H^1(\mathbb{R}^d)$ for $0<\sigma<\frac{2}{d-2}$. By contrast, when $\mu>0$, global well-posedness holds only under the condition
$0<\sigma<\frac{2}{d}$ (see \cite{1}). Moreover, the nonlinear Schrödinger equation possesses several invariances: invariance under spatial 
and temporal translations, Galilean invariance, and invariance under multiplication by a complex constant of modulus $1$. According to Noether's theorem, each invariance gives rise to a conservation law, namely energy, mass, and angular momentum.
$$\mathcal{E}(u)=\frac12\norm{\nabla u}_2^2-\frac{\mu}{2\sigma+2}\int\abs{u}^{2\sigma+2}\mathrm{d}x,\quad M(u)=\frac12\int \abs{u}^2\mathrm{d}x,\quad \mathcal{J}(u)=\frac12\Im\int \overline{u}\nabla u\mathrm{d}x.$$
The existence of soliton, which is unique, positive, radially symmetric, and radially decreasing, has been established in several works \cite{15,4,Kwong1989,Serrin,6,lecoz2009}, 
and we recall the following classical results:

\begin{itemize}
\renewcommand{\labelitemi}{$\bullet$}
\item For $0 < \sigma < \frac{2}{d}$, the solitons
are orbitally stable in $H^1(\mathbb{R}^d)$ (see \cite{Cazenave1982}). Multi-solitons were
constructed in this setting by Martel and Merle \cite{22}.

\item For $\sigma = \frac{2}{d}$, solitons are unstable, however, multi-solitons were
constructed by Merle \cite{Merle1990}.

\item For $\frac{2}{d} < \sigma < \frac{2}{d-2}$, solitons are unstable (see \cite{Gril}). Multi-solitons were
constructed in this setting by Martel, Merle, and Côte \cite{Cote2011}; see also Combet \cite{com}.
\end{itemize}
\subsection{Nonlinear Schrödinger equation with logarithmic-type nonlinearity}
Among nonlinear Schrödinger models, the logarithmic nonlinearity occupies a special role due to its distinctive analytical features, in particular its singular behavior at the origin and its slow growth at infinity. This leads to the logarithmic Schrödinger equation
\begin{equation}\label{NLSlog}\tag{logNLS}
i\partial_t u + \Delta u + \lambda u\log|u|^2 = 0,
\qquad x \in \mathbb{R}^d.
\end{equation}
It was introduced by Białynicki--Birula and Mycielski \cite{3}, to address a physical requirement
known as \emph{separability of independent systems}. 
$$
u_{0} = u_{1,0} \otimes u_{2,0}, \quad \text{i.e.} \quad
u_{0}(x_1,x_2) = u_{1,0}(x_1) u_{2,0}(x_2),
\quad \forall x_1 \in \mathbb{R}^{d_1}, \ \forall x_2 \in \mathbb{R}^{d_2},
$$
the solution $u$ to \eqref{NLSlog} in dimension $d=d_1+d_2$ with initial data $u|_{t=0}=u_{0}$ is
$$
u(t) = u_1(t) \otimes u_2(t),
$$
where $u_j$ is the solution to \eqref{NLSlog} in dimension $d_j$ with initial data $u_{j,0}$, for $j=1,2$.

Moreover, \eqref{NLSlog} possesses the same invariance as \eqref{NLSP} along with an additional scaling invariance: if $u = u(t,x)$ satisfies \eqref{NLSlog}, 
then for any $\kappa > 0$,
$$
u_\kappa(t,x) = \kappa  u(t,x)  e^{2 i \lambda t \log \kappa},
$$
also satisfies \eqref{NLSlog}. We also have a conservation laws, namely energy, mass, and angular momentum.
$$\mathcal{E}(u)=\frac12\norm{\nabla u}_2^2-\frac{\lambda}{2}\int\abs{u}^{2}(\log|u|^2-1)\mathrm{d}x,\quad M(u)=\frac12\int \abs{u}^2\mathrm{d}x,\quad J(u)=\frac12\Im\int \overline{u}\nabla u\mathrm{d}x.$$

The Cauchy theory for the logarithmic Schrödinger equation was first established by 
Cazenave and Haraux~\cite{8} in the case $\lambda>0$, in the Orlicz space $W_1(\mathbb{R}^d).$ 

The case $\lambda < 0$ was treated by Carles and Gallagher \cite{10}. The Cauchy theory was revisited and simplified by Hayashi and Ozawa \cite{19}.
The existence of a soliton has been established (see \cite{14}) for any $\omega \in \mathbb{R}$ and $\lambda > 0$. It is unique up to translation and is positive, radially symmetric, and radially decreasing. In this case, the solution is given explicitly and is called the \emph{Gausson}:
\[
\Phi_{\omega}(x) = e^{\frac{\omega + d}{2}} e^{-\frac{|x|^2}{2}}.
\]
We now recall the following result: The solitons
are orbitally stable in $W_1(\mathbb{R}^d)$ (see \cite{Ardila2016}). Multi-solitons were
constructed in this setting by G. Ferriere \cite{15}.
\subsection{Towards more general nonlinearities}
A first generalization of \eqref{NLSlog} is obtained by combining the logarithmic term with a power-type nonlinearity,
leading to the equation \begin{equation}\label{NLSlp}\tag{NLSlogP}
i\partial_t u + \Delta u + \lambda u\log\abs{u}^2+\mu u\abs{u}^{2\sigma} = 0,
\qquad x \in \mathbb{R}^d,\ d\ge 1.
\end{equation}

This equation was introduced by Carles and Gallagher \cite{13} in connection with a natural question, namely whether the large-time behavior of a power-type perturbation of equation \eqref{NLSlog} can be described.
Since the logarithmic nonlinearity has a strong influence on the dynamics (see \cite{13}),
the relevant comparison should be made with solutions of \eqref{NLSlog}, rather than with those
of \eqref{NLSP}. Due to the different methods used to study the cases of \eqref{NLSP} and \eqref{NLSlog}, 
an additional difficulty arises in the analysis of this equation.
The existence of solutions was established by Carles and Gallagher \cite{13} in the space $\Sigma_\alpha$
for $\lambda,\mu < 0$.
More recently, uniqueness in $H^1(\mathbb{R}^d)$ has been proved by Hayashi in~\cite{14}, without addressing existence. The existence of solitons has been established recently in \cite{32}, and the orbital stability of solitons is not yet established. In this work, we focus only on the construction of multi-solitons for \eqref{NLS}, which includes the case \eqref{NLSlp}. Recall that energy for \eqref{NLS} defined in the following way

\begin{equation}\label{0101}
E(u) := \frac{1}{2} \int |\nabla u|^2 \mathrm{d}x 
       - \frac{\lambda}{2} \int \abs{u}^2(\log\abs{u}^2-1)  \mathrm{d}x
       - \int G(|u|^2) \mathrm{d}x,
\end{equation}
where
$
G(y) := \frac12\int_0^{y} g(s) \mathrm{d}s
$ for all $y\in \mathbb{R_+}$. Inspired by the power-type NLS case, we assume that the function $g$ satisfies the following assumptions:
\begin{enumerate}[label=(A\arabic*), ref=(A\arabic*)]
\item \label{A1}
$g \in \mathcal{C}^0([0,+\infty),\mathbb{R})\cap \mathcal{C}^1((0,+\infty),\mathbb{R})$, 
$g(0)=0$, and
$
\lim_{s\to 0} s g'(s)=0.$
\item \label{A2}
There exist constants $C>0$ and $\sigma>0$ such that
$$
|g'(s)| \le C s^{\sigma-1}, \quad \text{for all } s\geq 1,
$$
where
$$
\begin{cases}
0<\sigma<\dfrac{2}{d-2}, & \text{if } d\ge 3,\\[6pt]
0<\sigma<+\infty, & \text{if } d=1,2,
\end{cases}
$$
\end{enumerate}
The assumptions \ref{A1}-\ref{A2} are introduced to ensure the local existence of solutions. In addition, we distinguish between the focusing and the defocusing cases:
\begin{enumerate}[label=(B\arabic*), ref=(B\arabic*)]
    \item \label{B1} Focusing case: There exists $s_0>0$ such that 
          $G(s_0)>0.$
    \item \label{B2} Defocusing case: For all $s_0>0$, 
          $G(s_0)\leq0.$
\end{enumerate}
\begin{remark}
When $\lambda = 0$ and $g$ satisfies assumptions \ref{A1}--\ref{A2}, the Cauchy problem is locally well-posed in $H^1$ (see \cite[Chapter 4]{6}). In addition, if assumption \ref{B1} also holds, the solution is globally well-posed provided that $\sigma < \frac{2}{d}$ (see \cite[Chapter 4]{6}). Moreover, solitons exist (see \cite{4}), and multi-solitons were constructed by Côte and Le Coz \cite{12}.
\end{remark}
We now turn to the case $\lambda > 0$, for which global well-posedness can be established under suitable assumptions on the nonlinearity.
\begin{theorem}\cite[Theorem 1.1]{ben2026}\label{100}
    For $\lambda > 0$ and \ref{B2}, 
or \ref{B1} with $0 < \sigma < \frac{2}{d}$, 
for any initial data $u_0 \in W_1(\mathbb{R}^d)$, there exists a unique global solution $
u \in \mathcal{C}_b(\mathbb{R}, W_1(\mathbb{R}^d))\cap\mathcal{C}^1(\mathbb{R},W_1'(\mathbb{R}^d)).
$ for \eqref{NLS},
Moreover, the mass $M(u(t))$, the angular momentum $\mathcal{J}(u(t))$, and the energy $E(u(t))$ are conserved in time.
\end{theorem}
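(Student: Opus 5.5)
We would follow the classical compactness strategy of Cazenave--Haraux for \eqref{NLSlog}, treating $g$ as a perturbation controlled by the $H^1$-theory of \cite[Chapter 4]{6}.

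\textbf{Approximate problems.} First we regularize the logarithm near the origin. For $\varepsilon>0$ we replace $\lambda u\log|u|^2$ by a truncated nonlinearity, for instance $\lambda u\log(\varepsilon+|u|^2)$ split into its Lipschitz small-amplitude part and its sublinear-growth large part. Together with \ref{A1}--\ref{A2}, this makes the full nonlinearity $H^1$-subcritical and locally Lipschitz. Kato's method then gives a local $H^1$ solution $u_\varepsilon$ that conserves the regularized mass, energy $E_\varepsilon$ and momentum.

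\textbf{Uniform bounds.} The main step is an a priori bound on $\|u_\varepsilon(t)\|_{H^1}$ that is uniform in $\varepsilon$ and in $t$. Write $E$ as
\[
\tfrac12\|\nabla u\|_2^2+\tfrac{\lambda}{2}\int |u|^2\log^-|u|^2-\tfrac{\lambda}{2}\int|u|^2\log^+|u|^2+\tfrac{\lambda}{2}\|u\|_2^2-\int G(|u|^2).
\]
\begin{itemize}
\item The term $\int|u|^2\log^+|u|^2\lesssim \int |u|^{2+\delta}$ is controlled by Gagliardo--Nirenberg with an arbitrarily small power of $\|\nabla u\|_2$, using mass conservation.
\item Under \ref{B2}, $-\int G\ge 0$ is harmless.
\item Under \ref{B1}, \ref{A1}--\ref{A2} give $|G(s)|\lesssim s^{1+\theta}+s^{\sigma+1}$ near $0$ (with $\theta>0$ small, from $sg'(s)\to0$) and at infinity. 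Gagliardo--Nirenberg yields $\int |u|^{2\sigma+2}\lesssim \|u\|_2^{2\sigma+2-\sigma d}\|\nabla u\|_2^{\sigma d}$, and $\sigma d<2$ is exactly what lets the kinetic term absorb this.
\end{itemize}
Hence $\|\nabla u_\varepsilon\|_2$ and $\int |u_\varepsilon|^2|\log|u_\varepsilon|^2|$ stay bounded, so the local solutions extend globally with uniform bounds in $W_1$.

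\textbf{Passage to the limit and uniqueness.} Using the equation, $\partial_t u_\varepsilon$ is bounded in $L^\infty_{loc}(W_1')$. Aubin--Lions and a diagonal extraction then give $u_\varepsilon\to u$ in $C([-T,T],L^2_{loc})$ and weak-$*$ in $L^\infty(W_1)$, and $u$ solves \eqref{NLS} in $W_1'$. Uniqueness comes from the Cazenave--Haraux inequality $|\Im((z_1\log|z_1|^2-z_2\log|z_2|^2)(\bar z_1-\bar z_2))|\le 2|z_1-z_2|^2$, which gives $\frac{d}{dt}\|u-v\|_2^2\le 4\lambda\|u-v\|_2^2+2\int|u g(|u|^2)-vg(|v|^2)||u-v|$. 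The $g$-term is treated as in the $H^1$ theory with Strichartz estimates, or more simply via the local Lipschitz bound $|zg(|z|^2)-wg(|w|^2)|\lesssim(1+|z|^{2\sigma}+|w|^{2\sigma})|z-w|$ and a Gronwall argument in a Strichartz norm.

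\textbf{Conservation and continuity.} Mass and momentum conservation pass to the limit directly. Lower semicontinuity gives $E(u(t))\le E(u_0)$, and applying this backward in time from $t$ upgrades it to equality. Conservation of mass and energy, plus weak continuity, then yield strong continuity in $W_1$, hence $u\in\mathcal C_b(\mathbb R,W_1)\cap\mathcal C^1(\mathbb R,W_1')$.

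\textbf{Main obstacle.} We expect two delicate points. The first is uniqueness: the logarithm is not Lipschitz, and it must be combined with the $g$-term, which in $d\ge3$ needs Strichartz estimates rather than plain $L^2$ energy. The second is convergence of the nonlinear terms, where $\int G(|u_\varepsilon|^2)\to\int G(|u|^2)$ requires tightness from a localized-mass (virial-type) estimate. We would first try to get both from \cite[Chapter 4]{6}.
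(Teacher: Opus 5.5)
The paper gives no proof of Theorem~\ref{100}. It is imported from \cite{ben2026}, and the introduction separately credits $H^1$-uniqueness for the log--power model to Hayashi, so there is no internal argument to compare with.

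\textbf{Existence part.} On its own merits, this is the standard Cazenave--Haraux scheme and is fine in outline: regularization, bounds uniform in $\varepsilon$ and $t$ from mass and energy (the sign of $G$ under \ref{B2}, Gagliardo--Nirenberg with $\sigma d<2$ under \ref{B1}), tightness from a localized mass estimate, and Fatou for the $\log^-$ part. Two small corrections:
\begin{itemize}
\item $sg'(s)\to0$ does not give $|G(s)|\lesssim s^{1+\theta}$ near $0$; take $g(s)=1/|\log s|$ near $0$. This is harmless, since $|G(s)|\lesssim s+s^{\sigma+1}$ suffices: the linear part is controlled by the mass.
\item With $\log(\varepsilon+|u|^2)$ you only control the regularized $\log^-$ term uniformly. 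So $u(t)\in W_1(\mathbb{R}^d)$ should come from Fatou in the limit, not from a uniform bound on $\int|u_\varepsilon|^2\bigl|\log|u_\varepsilon|^2\bigr|$.
\end{itemize}

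\textbf{The genuine gap is uniqueness.} You flag it but do not resolve it, and the remedy you propose cannot work as stated.
\begin{itemize}
\item The difference $u\log|u|^2-v\log|v|^2$ is not Lipschitz in $w=u-v$; it is only log-Lipschitz near $0$. So it cannot be placed in a dual Strichartz norm and absorbed by a Gronwall argument in $L^q_tL^r_x$.
\item What makes this term harmless is the cancellation of Lemma~\ref{002}, and that acts only inside the $L^2$ pairing.
\item In that $L^2$ identity, however, the $g$-term leaves $\int(1+|u|^{2\sigma}+|v|^{2\sigma})|w|^2\,\mathrm{d}x$. Closing Gronwall then essentially requires $u,v\in L^{2\sigma}_tL^\infty_x$.
\end{itemize}
This last condition is automatic for $d=1$. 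For $d\ge3$ it fails:
\begin{itemize}
\item Gagliardo--Nirenberg and the $H^1$ bound only give $\frac{\mathrm{d}}{\mathrm{d}t}\|w\|_2^2\lesssim\|w\|_2^2+\|w\|_2^{2(1-\theta)}$ with $\theta=\frac{d\sigma}{2\sigma+2}\in(0,1)$. This inequality does not force $w\equiv0$.
\item You cannot upgrade $u$ to $L^q_tW^{1,r}_x$ by derivative Strichartz estimates, because $\nabla(u\log|u|^2)$ contains $\log|u|^2\,\nabla u$. That term is not controlled for $u\in W_1(\mathbb{R}^d)$.
\item For $d\ge4$, even $W^{1,r}$ with admissible $r\le\frac{2d}{d-2}$ does not embed into $L^\infty$.
\end{itemize}
In $d=2$ you would at least need a Yudovich/Osgood-type refinement, which is not in your sketch. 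So a genuinely new ingredient is needed to couple the $L^2$ mechanism for the logarithm with the Strichartz mechanism for $g$.

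This gap also propagates. Your proof of energy conservation applies lower semicontinuity backward from time $t$, and it identifies the backward solution with $u$ only through uniqueness. Your strong continuity in $W_1(\mathbb{R}^d)$ rests on that conservation law, so both steps inherit the gap.
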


\begin{definition}
    Let $c > 0$ be arbitrary. We say that $u_c \in S(c)$ is a \emph{ground state} if
$$
E(u_c) = \inf \big\{ E(u) \mid  u \in S(c) \big\},
$$
where $S(c):=\{u\in W_1(\mathbb{R}^d)\mid \norm{u}_2=c\}.$
\end{definition}
The existence of traveling wave solutions \eqref{050} is not guaranteed for all $\omega \in \mathbb{R}$, however, they do exist for certain values of $\omega$.
\begin{theorem}\cite{27}\label{06}
    Let $\lambda=1$. Suppose $d \ge 2$, $c > 0$, and that one of the following three conditions holds:
\begin{itemize}
    \item[$\bullet$] \ref{B2} holds and $\sigma > 0$;
    \item[$\bullet$] \ref{B1} holds and $0 < \sigma < \frac{2}{d}$;
\end{itemize}
Then the equation \eqref{050} admits a ground state, which is positive, radially symmetric, decreasing.
\end{theorem}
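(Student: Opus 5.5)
The plan is a direct minimization on $S(c)$ by concentration--compactness. Assume $\lambda=1$. Set $I(c):=\inf\{E(u)\mid u\in S(c)\}$.

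\emph{Step 1: $I(c)$ is finite and minimizing sequences are bounded in $W_1$.} For the logarithmic term I will use the logarithmic Sobolev inequality
\[
\int |u|^2\log|u|^2\,\mathrm{d}x\le \frac{a^2}{\pi}\norm{\nabla u}_2^2+\big(\log\norm{u}_2^2-d(1+\log a)\big)\norm{u}_2^2 ,
\]
valid for all $a>0$. Choosing $a$ small absorbs this term into $\frac14\norm{\nabla u}_2^2$.

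For the $g$-term:
\begin{itemize}
\item Under \ref{B2}, $-\int G\ge 0$, so nothing needs to be controlled.
\item Under \ref{B1} with $\sigma<2/d$, \ref{A1}--\ref{A2} give $|G(s)|\le C(s+s^{\sigma+1})$. Gagliardo--Nirenberg then gives $\int|u|^{2\sigma+2}\le C c^{2\sigma+2-d\sigma}\norm{\nabla u}_2^{d\sigma}$, and since $d\sigma<2$ this is also absorbed.
\end{itemize}
Hence $E(u)\ge \frac18\norm{\nabla u}_2^2-C(c)$ on $S(c)$.

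Writing $|u|^2\log|u|^2=A(|u|)-B(|u|)$ with $A,B\ge0$ (the usual splitting of Cazenave), we have $\int A(|u|)$ bounded by $H^1$ norms. Boundedness of $E$ then bounds $\int B(|u|)$, so the sequence is bounded in $W_1$.

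\emph{Step 2: strict subadditivity.} I want $I(c)<0$ and
\[
I(c)<I(\alpha)+I(\sqrt{c^2-\alpha^2}),\qquad 0<\alpha<c .
\]
The logarithmic scaling is the key tool. For $\theta>1$,
\[
E(\theta u)=\theta^2E(u)-\tfrac{1}{2}\theta^2\log\theta^2\norm{u}_2^2-\int\big(G(\theta^2|u|^2)-\theta^2G(|u|^2)\big).
\]
The middle term is strictly negative. For the last term, I will arrange that it has the right sign, or is $o(\theta^2\log\theta^2)$ relative to the logarithmic gain. This is where I expect difficulty under \ref{B2}, where $G$ could be very negative. There I would argue with the Gausson-type test function to get $I(c)<0$, and then use the standard ``$I(\theta c)<\theta^2 I(c)$'' argument. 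In that argument, the logarithmic term gives a strict $-\theta^2\log\theta^2\, c^2/2$ gain, which dominates once one checks that $G(\theta^2 s)-\theta^2G(s)$ is controlled via \ref{A1}. Strict subadditivity then follows in the classical way.

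\emph{Step 3: compactness.} Apply Lions' lemma to a minimizing sequence $u_n$.
\begin{itemize}
\item Vanishing would force $\int B(|u_n|)\to$ its limit while $\int A(|u_n|)\to0$ and $\int G(|u_n|^2)\to 0$ (the latter from the subcritical bounds). This gives $\liminf E\ge0>I(c)$, a contradiction.
\item Dichotomy is excluded by Step 2.
\end{itemize}
So, after translations, $u_n\to u$ in $L^2$, hence in $L^p$ for $2\le p<2^*$. Weak lower semicontinuity of $\norm{\nabla\cdot}_2^2$ and Fatou applied to $\int B$ give $E(u)\le I(c)$, so $u$ is a minimizer.

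\emph{Step 4: properties of the minimizer.} Replace $u$ by $|u|$, which stays in $S(c)$ without increasing $E$. Then take the Schwarz symmetrization $|u|^*$; the potential terms are preserved by equimeasurability and the gradient does not increase by P\'olya--Szeg\H{o}. This yields a nonnegative, radial, nonincreasing minimizer.

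The Lagrange multiplier rule in $W_1$ (using the $\mathcal{C}^1$ structure on $W_1$ from \cite{8}) gives \eqref{050} for some $\omega$. Elliptic regularity, and the strong maximum principle applied where $\log$ is locally bounded below, then give positivity. Since $s\log s^2\to0$, a comparison argument works near zeros, which I would treat carefully.

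The main obstacle is Step 2 in the defocusing case, i.e.\ controlling the scaling defect of $G$ against the logarithmic gain.
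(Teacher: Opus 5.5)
The paper does not prove this statement; it is quoted from the cited reference, so your proposal has to stand on its own. The overall architecture is reasonable: coercivity on $S(c)$, compactness of minimizing sequences, symmetrization, Lagrange multiplier. However, two steps fail as written.

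\textbf{Step 2.} Scaling the amplitude, $u\mapsto\theta u$, cannot give $I(\theta c)<\theta^2 I(c)$ for general $g$. For $u\in S(c)$,
\[
E(\theta u)-\theta^2E(u)=-c^2\theta^2\log\theta+\int\big(\theta^2G(|u|^2)-G(\theta^2|u|^2)\big).
\]
Take the admissible defocusing choice $g(s)=-s^\sigma$. The last integral is then $\frac{\theta^{2\sigma+2}-\theta^2}{2(\sigma+1)}\int|u|^{2\sigma+2}\ge0$, so
\[
E(\theta u)-\theta^2E(u)=(\theta-1)\Big(\frac{\sigma}{\sigma+1}\int|u|^{2\sigma+2}-c^2\Big)+O\big((\theta-1)^2\big).
\]
The sign depends on the ratio $\int|u|^{2\sigma+2}/c^2$, which nothing controls along minimizing sequences. \ref{A1} only describes $g$ near $s=0$, so it cannot make this defect small against the logarithmic gain. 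Under \ref{B1} with a general $g$ the defect has no sign either.

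The missing idea is to raise the mass by dilation rather than by amplitude. Set $u_\theta(x):=u(\theta^{-2/d}x)$. Every potential term (logarithmic and $G$) scales exactly by $\theta^2$ and the kinetic term by $\theta^{2-4/d}$, so whatever $G$ is,
\[
E(u_\theta)=\theta^2E(u)-\tfrac12\big(\theta^2-\theta^{2-4/d}\big)\norm{\nabla u}_2^2 .
\]
Combine this with a lower bound $\inf_n\norm{\nabla u_n}_2>0$ along minimizing sequences, which comes from two facts:
\begin{itemize}
\item Your log-Sobolev inequality, optimized in $a$, gives $\int|u|^2\log|u|^2\le c^2\log c^2+\frac d2c^2\log\frac{2\norm{\nabla u}_2^2}{\pi e d c^2}$, which tends to $-\infty$ as $\norm{\nabla u}_2\to0$.
\item Meanwhile $|\int G(|u|^2)|\le\varepsilon c^2+C_\varepsilon\int|u|^{2\sigma+2}$, and the last integral tends to $0$ by Gagliardo--Nirenberg.
\end{itemize}
Hence $E\to+\infty$ on $S(c)$ as $\norm{\nabla u}_2\to0$, and the dilation yields $I(\theta c)<\theta^2 I(c)$ in both cases. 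The classical deduction of strict subadditivity from this inequality needs no sign on $I$.

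\textbf{Step 3.} The claim $I(c)<0$ is false in general, so excluding vanishing via $\liminf E\ge0>I(c)$ breaks down. For $g\equiv0$, which satisfies \ref{B2}, the same optimized inequality shows $I(c)=c^2\big(\frac{d+1}{2}+\frac d4\log\pi-\log c\big)$, attained by a Gausson. This is positive for small $c$, and \ref{B2} only raises $E$.

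Instead, use the logarithm directly. Under vanishing, $u_n\to0$ in $L^p$ for $2<p<2^*$. Hence for every $M>0$,
\[
\int_{\{|u_n|^2\ge e^{-M}\}}|u_n|^2\le e^{M(p-2)/2}\norm{u_n}_p^p\to0,
\]
so $\int_{\{|u_n|<1\}}|u_n|^2\log(1/|u_n|^2)\ge Mc^2-o(1)$. Since $M$ is arbitrary, this contradicts the $W_1$ bound from your Step 1.

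Two further remarks.
\begin{itemize}
\item The hypothesis $d\ge2$ points to a shorter route that avoids Lions' lemma and subadditivity. Your Step 4 shows nothing is lost by minimizing over symmetric decreasing functions. For these, Strauss' bound $|u(x)|\lesssim|x|^{-(d-1)/2}\norm{u}_{H^1}$ gives $\log(1/|u_n|^2)\ge(d-1)\log R-C$ on $\{|x|>R\}$. The $W_1$ bound then yields $\int_{|x|>R}|u_n|^2\lesssim 1/\log R$ uniformly in $n$, which is $L^2$ tightness.
\item For positivity, $-\Delta u\le0$ near a zero, so the classical strong maximum principle does not apply. V\'azquez's version does, since $\int_0^{1/2}\frac{ds}{s\sqrt{\log(1/s)}}=\infty$.
\end{itemize}
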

Note that the previous theorem can be extended to a wider class of nonlinearities. Under appropriate assumptions on the nonlinearity, together with a strong sublinearity condition, one can establish the existence of a ground state (see \cite{27}). However, uniqueness remains an open question in this setting, and the orbital stability of the ground state has not yet been established.

\subsection{Main result}

In this paper, we extend the techniques previously used to construct multi-solitons for the \eqref{NLSP} and \eqref{NLSlog} equations, in order to build multi-solitons for the \eqref{NLS}. This leads to the following main result.
\begin{theorem}\label{09}
 Let $N \geq 2$, 
$\omega_1, \dots, \omega_N \in \mathbb{R}$, 
$\gamma_1, \dots, \gamma_N \in \mathbb{R}$, 
$v_1, \dots, v_N \in \mathbb{R}^d$, 
$x_1, \dots, x_N \in \mathbb{R}^d$, and suppose that for each chosen $\omega_j$, the corresponding $\Phi_{\omega_j}$ exists. \\
We define
$$
v_\star := \min_{j \neq k} \lvert v_j - v_k \rvert .
$$
If $v_\star >0$, then there exists $\Upsilon_0 \in \mathbb{R}$ and a solution 
$
u \in \mathcal{C}([\Upsilon_0,\infty), W_1(\mathbb{R}^d))
$ of \eqref{NLS}
such that for all $ t \in [\Upsilon_0, \infty) $, 
$$
\norm{ u(t) - \sum_{j=1}^N Q_j(t)}_{H^1}
\leq 
\exp\!\left( -\frac{(v_\star (t-\Upsilon_0))^2}{8} \right),
$$where $Q_j$ is defined in~\eqref{000}.
\end{theorem}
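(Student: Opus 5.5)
We follow Ferriere's construction of multi-solitons for \eqref{NLSlog}, which needs neither orbital stability nor coercivity of the linearized operator. Such tools are unavailable here, since uniqueness and stability of $\Phi_\omega$ are open (see the discussion after Theorem \ref{06}). The construction uses only that the solitons have Gaussian-type decay and are well separated.

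Take an increasing sequence $T_n\to+\infty$ and let $u_n$ solve \eqref{NLS} with final data $u_n(T_n)=Q(T_n)$. Theorem \ref{100} makes $u_n$ global in $W_1(\mathbb{R}^d)$. The goal is a uniform bound
\[
\norm{u_n(t)-Q(t)}_{H^1}\le e^{-(v_\star(t-\Upsilon_0))^2/8},\qquad t\in[\Upsilon_0,T_n],
\]
with $\Upsilon_0$ independent of $n$. Once this holds, the sequence $u_n(\Upsilon_0)$ is bounded in $W_1$; we will also need a uniform bound on $\int |x|^2|u_n(\Upsilon_0)|^2$, which follows from the Gaussian localization of $Q$ and a virial-type estimate. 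This gives compactness in $L^2$. By continuity of the flow (stability of \eqref{NLS} in $L^2$ through the logarithmic Lipschitz-type inequality of Cazenave--Haraux, plus weak $H^1$ limits), we pass to the limit and obtain a solution $u$ that inherits the bound by weak lower semicontinuity.

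For the uniform bound, set $w=u_n-Q$. Then $w$ solves
\[
i\partial_t w+\Delta w=-\big[F(Q+w)-F(Q)\big]-\Big[F(Q)-\sum_j F(Q_j)\Big],
\]
where $F(u)=\lambda u\log|u|^2+ug(|u|^2)$.

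First we estimate the interaction source $\sum_j F(Q_j)-F(Q)$. On the region where $Q_j$ dominates, the other solitons have size $e^{-|x-v_kt-x_k|^2/2}$ times polynomial factors, because the ground states decay like Gaussians for logarithmic equations. This decay has to be proved as a preliminary lemma from \eqref{050}. Since $\abs{x-v_jt}+\abs{x-v_kt}\ge v_\star t$, the source is bounded in $H^1$ by roughly $e^{-(v_\star t)^2/4}\cdot$poly$(t)$. The $\log$ term is the delicate part: $|z\log|z|^2-z'\log|z'|^2|$ is not Lipschitz. We control it with the Cazenave--Haraux inequality
\[
\abs{\Im\big((z\log|z|^2-z'\log|z'|^2)(\bar z-\bar z')\big)}\le 2|z-z'|^2,
\]
and we use assumptions \ref{A1}--\ref{A2} for $g$.

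Next comes an energy-type estimate for $w$, run backward from $T_n$, where $w(T_n)=0$.

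For the mass, the inequality above gives
\[
\left|\frac{d}{dt}\norm{w}_2^2\right|\le 4\lambda\norm{w}_2^2+C\norm{w}_2\norm{\text{source}}_2+C\norm{w}_{2}\norm{w}_{H^1}\,(\text{$g$-part}).
\]

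For the gradient part we use the modified functional
\[
\mathcal H(w)=\tfrac12\norm{\nabla w}^2-\Re\int\big(\text{second-order expansion of }E\text{ around }Q\big),
\]
localized around each soliton by Galilean cutoffs. Its time derivative is bounded by $C\norm{w}_{H^1}^2$ plus source terms. The logarithm's second derivative blows up where $Q$ is small, so we use the bound $|\nabla(\cdot)|$ only through $\log$ growth. We absorb it with a weight $\log(1/|Q|)\lesssim 1+|x|^2$, using the Gaussian tails.

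Gronwall backward in time then gives
\[
\norm{w(t)}_{H^1}^2\le C\int_t^{T_n}e^{C(s-t)}e^{-(v_\star s)^2/4}\,ds .
\]
The Gaussian beats the exponential growth $e^{Cs}$ for $t\ge\Upsilon_0$ large, which produces the rate $e^{-(v_\star t)^2/8}$ after the time shift to $\Upsilon_0$. A bootstrap handles the nonlinear terms in $w$, which are small when $\norm{w}_{H^1}\le 1$.

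The main obstacle is the $H^1$-level control of $\nabla\big[F(Q+w)-F(Q)\big]$. The logarithmic term is non-Lipschitz, and because we lack coercivity we cannot simply linearize. Following Ferriere's route, we first close the $L^2$ estimate, which is clean thanks to the inequality above. We then obtain $H^1$ control by interpolating between that $L^2$ decay and a uniform $H^2$ (or weighted $H^1$) bound on $u_n$, propagated with exponential growth in time. The Gaussian-squared decay of the $L^2$ error survives interpolation, with the constant $1/8$ chosen to leave room for it.
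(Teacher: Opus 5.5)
Your construction of $u_n$, the $L^2$ estimate via the Cazenave--Haraux inequality, and the compactness and limiting steps agree in substance with the paper. Two small cautions there. First, the virial bound on $\int|x|^2|u_n(\Upsilon_0)|^2$ is uniform in $n$ only if you compare with $\int|x|^2|Q(t)|^2$ and use the decay of $w$; the naive bound grows with $T_n$. Second, the Cazenave--Haraux inequality gives $L^2$ stability for the logarithmic part only, not for the $g$-part.

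The genuine gap is the $H^1$ estimate. You offer two routes and, as described, neither closes.

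The first route is a localized quadratic functional from a second-order expansion of $E$ around $Q$, closed by Gronwall. It breaks on the logarithm.
\begin{itemize}
\item The map $z\mapsto|z|^2\log|z|^2$ is not $C^2$ at $0$.
\item The quadratic coefficient $\log|Q(t,x)|^2$ is unbounded. It behaves like $-|x-x_j^*(t)|^2$ in the tails, and it is $-\infty$ at zeros of $Q=\sum_jQ_j$, which interference between solitons can create. So the weight bound $\log(1/|Q|)\lesssim1+|x|^2$ holds for each $Q_j$ but not for $Q$.
\item The Taylor remainder is uncontrolled wherever $|w|\gtrsim|Q|$, i.e.\ everywhere outside the soliton cores.
\item Differentiating such a functional in time produces weighted terms like $\int|x-x_j^*(t)|\,|w|^2$. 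These come from $\partial_t\log|Q|$ and from integrating $\Delta w$ by parts against $\log|Q|^2$, and they are not bounded by $C\|w\|_{H^1}^2$.
\end{itemize}

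The second route interpolates the $L^2$ decay against an $H^2$ bound propagated with exponential growth. It fails on uniformity and on rates.
\begin{itemize}
\item Any such propagated bound (e.g.\ from the Cazenave--Haraux difference-quotient argument on $\partial_tu_n$) starts from the data at $T_n$. It therefore carries a factor $e^{C(T_n-t)}$, which is unbounded as $n\to\infty$ for fixed $t$.
\item The rate bookkeeping is also off. The logarithmic interaction $Q\log|Q|^2-\sum_kQ_k\log|Q_k|^2$ is not a product of two solitons: near $Q_j$ it is $|Q_k|$ times a polynomial. So its $L^2$ norm, and hence $\|w\|_2$, is only about $e^{-(v_\star t)^2/8}$, not $e^{-(v_\star t)^2/4}$.
\item Then $\|w\|_{H^1}\le\|w\|_{L^2}^{1/2}\|w\|_{H^2}^{1/2}$ gives at best $e^{-(v_\star t)^2/16}$. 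That is not the claimed rate, and in general it does not close the bootstrap, since your own $L^2$ estimate uses $\|w\|_{H^1}$ through the $g$-term.
\end{itemize}

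The missing idea is that you need neither to differentiate a $w$-dependent functional nor any coercivity beyond the trivial one modulo $L^2$. The paper uses the localized action $S^{\rm loc}(t,v)=E(v)+\sum_{j\ge1}\bigl(\omega_j+\frac{|v_j|^2}{4}\bigr)M_j(t,v)+\sum_{j\ge1}v_j\cdot\mathcal{J}_j(t,v)$. It is built on a partition of unity $\psi_j$ equal to $1$ on balls of radius about $\frac{v_\star(t-\Upsilon_1)}{2}$ around the solitons.

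For the upper bound: since $E$ is conserved, $\frac{d}{dt}S^{\rm loc}(t,u_n(t))$ consists only of localized mass and momentum fluxes supported where $\nabla\psi_j\neq0$. These are Gaussian-small under the bootstrap hypothesis. Integrating back from $T_n$, where $u_n=Q$, gives $S^{\rm loc}(t,u_n(t))-\sum_jS_j(Q_j)\lesssim\Upsilon_*^{-1}e^{-(v_\star(t-\Upsilon_1))^2/4}$.

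For the lower bound, the Taylor expansion is replaced by Ferriere's one-sided inequality $F(z_1)-F(z_2)-dF(z_2)(z_1-z_2)\le2|z_1-z_2|^2\bigl(\log\max(|z_1|,|z_2|)+1\bigr)$ for $F(s)=|s|^2(\log|s|^2-1)$. It is applied around $Q_j$ (not $Q$) on the support of $\psi_j$. Because $F$ enters $E$ with a minus sign, this is exactly the direction needed. After the soliton equation cancels the first-order terms, one gets $S^{\rm loc}(t,u_n)-\sum_jS_j(Q_j)\ge\frac12\|\nabla w_n\|_2^2$ minus three kinds of error: $L^2$-type terms, superquadratic terms ($\|w_n\|_{H^1}^{2+\delta}$, $\|w_n\|_{H^1}^{2\sigma+2}$), and Gaussian interaction terms. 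Together with the $L^2$ estimate, this closes the $H^1$ bootstrap at rate $1/8$ with no interpolation loss.
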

\begin{remark}
    Note that the convergence is only obtained in $H^1$--norm here. However, we do believe that a convergence in $W_1(\mathbb{R}^d)$ should hold.
\end{remark}

For the nonlinear Schrödinger equation with a general nonlinearity of the form $g_1(|u|^2) u$, 
the function $g_1$ usually needs to satisfy a flatness property at $0$ for such a result to hold 
(for example, in \cite{22, 12}: $ g_1(0) = 0 $ and $\lim_{s \to 0} s g_1'(s) = 0 $). 
Indeed, setting $g_1(s) = \lambda \log(s) + g(s)$, we observe that $g_1(s)\to -\infty$ as $s\to 0^+$. A similar situation occurs in the case of \eqref{NLSlog}. 

This paper is organized as follows. In Section~\ref{1}, we prove that the ground state exhibits a Gaussian decay. Section~\ref{3} explains how the uniform $H^1$--estimates together with the compactness property yield the construction of multi-solitons. In Section~\ref{4}, we establish the main uniform $H^1$--estimates. Section~\ref{5} is devoted to proving the compactness property. The Appendix contains several technical lemmas along with their proofs. By the scaling
\[
u(t,x)\mapsto u\left(\lambda t,\sqrt{\lambda}\,x\right), \qquad \lambda>0,
\]
the equation is transformed into one with nonlinearity
\[
g_\lambda(s)=g(s)\lambda^{-1}.
\]
Since $g_\lambda$ satisfies the same assumptions \ref{A1}--\ref{A2} as $g$, it suffices to establish the result for $\lambda=1$. Accordingly, we shall assume $\lambda=1$ throughout.

\textbf{Notation:} We use the notation $A \lesssim B$ to denote the inequality $A \leq C B$ for some constant $C > 0$ which is independent of the time and of $n$. We also define $f^{+} := \max(0,f)$. For $f,g\in L^2(\mathbb{R}^d)$, we set  $$\Re(f,g):=\Re(\int f\overline{g})\quad \text{ and }\quad \Im(f,g):=\Im(\int f\overline{g}).$$



\section{ Properties of the solitary waves}\label{1}
Let $\Delta_r$ denote the radial Laplacian, that is, for any radial function $f=f(r)$,
$$
\Delta_r f = f''(r) + \frac{d-1}{r} f'(r).
$$

Consider the stationary equation \eqref{050}. From Theorem \ref{06}, we may assume that $\Phi_{\omega}$ is radial, positive, and strictly decreasing in $r=|x|$.  

In the case of \eqref{NLSP}, the ground state exhibits exponential decay (see \cite{4}). Moreover, in the case of \eqref{NLSlog}, the ground state can be computed explicitly and displays Gaussian decay (see \cite{5}). In our setting, for $r$ large and thus when the solution goes to 0, the logarithmic nonlinearity is stronger than a power nonlinearity, which suggests that one might expect a Gaussian-type decay. We obtain the precise decay properties in Lemma \ref{011} by means of the maximum principle. 

To justify the forthcoming computations rigorously, we need to ensure that the solution $\Phi_{\omega}$ of \eqref{050} is sufficiently regular.
\begin{lemma}
    Let $\Phi_{\omega}$ be the solution of \eqref{050}. Then $\Phi_{\omega}$ is of class $\mathcal{C}^2$.
\end{lemma}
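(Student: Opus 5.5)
The plan is to treat \eqref{050} as a linear Poisson-type equation $\Delta\Phi_\omega = F$ with right-hand side
$$F := \omega\Phi_\omega - \Phi_\omega\log|\Phi_\omega|^2 - \Phi_\omega g(|\Phi_\omega|^2),$$
and to bootstrap regularity: first $W^{2,p}_{\mathrm{loc}}$ for every $p<\infty$, then $\mathcal{C}^{1,\alpha}$, then Hölder continuity of $F$, and finally $\mathcal{C}^{2,\alpha}$ by Schauder theory. Since $\Phi_\omega$ is radial, positive and decreasing by Theorem \ref{06}, one could also argue through the radial ODE $\Phi''+\frac{d-1}{r}\Phi' = F$ for $r>0$. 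I prefer the elliptic route, because it handles the origin directly.

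\textbf{Step 1 (integrability of $F$).} The function $\Phi_\omega$ lies in $W_1(\mathbb{R}^d)\subset H^1$. Its three nonlinear terms are controlled as follows.
\begin{itemize}
\item Logarithmic term: $|s\log s^2|\lesssim s^{1-\varepsilon}+s^{1+\varepsilon}$ for every $\varepsilon>0$.
\item Term with $g$: by \ref{A1}--\ref{A2}, $|g(s)|\lesssim 1+s^{\sigma}$, hence $|\Phi g(|\Phi|^2)|\lesssim |\Phi|+|\Phi|^{2\sigma+1}$.
\end{itemize}
Since $2\sigma+1<\frac{d+2}{d-2}$, the right-hand side $F$ belongs to $L^{q}_{\mathrm{loc}}$ for some $q>\frac{2d}{d+2}$, with $H^1$-subcritical growth. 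The Brezis--Kato / Moser iteration, or the standard bootstrap as in \cite[Chapter 4]{6} for subcritical nonlinearities, then gives $\Phi_\omega\in L^\infty_{\mathrm{loc}}$. Actually $\Phi_\omega\in L^\infty$: positivity and radial monotonicity give boundedness away from $0$, and at $0$ the bootstrap applies. Consequently $F\in L^p_{\mathrm{loc}}$ for all $p<\infty$, because the logarithmic term is bounded on bounded sets of $\Phi$ ($s\log s^2\to0$ as $s\to0$).

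\textbf{Step 2 ($\mathcal{C}^{1,\alpha}$).} Calderón--Zygmund estimates give $\Phi_\omega\in W^{2,p}_{\mathrm{loc}}$ for all $p<\infty$. Morrey's embedding then yields $\Phi_\omega\in \mathcal{C}^{1,\alpha}$ for every $\alpha<1$.

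\textbf{Step 3 (Hölder continuity of $F$; the main obstacle).} The nonlinearity $h(s)=s\log s^2 + s g(s^2)$ is not Lipschitz at $s=0$, because $h'(s)=\log s^2+2+\dots\to-\infty$. It is nevertheless Hölder continuous of any order $\beta<1$ on bounded sets: $s\log s^2$ is $\beta$-Hölder near $0$, and $sg(s^2)$ is $\mathcal{C}^1$ on $(0,\infty)$ with derivative $g(s^2)+2s^2g'(s^2)\to 0$ as $s\to0$ by \ref{A1}. Composing with the $\mathcal{C}^{1}$ function $\Phi_\omega$ shows that $F\in \mathcal{C}^{0,\beta}_{\mathrm{loc}}$. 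Moreover, since $\Phi_\omega>0$ everywhere, on any compact set $\Phi_\omega\ge c>0$. There $h$ is genuinely $\mathcal{C}^1$, so the singularity of the logarithm at $0$ is never reached locally. This is the key point that makes the argument clean, and I would emphasize it.

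\textbf{Step 4 (conclusion).} Schauder interior estimates for $\Delta\Phi_\omega=F$ with $F\in\mathcal{C}^{0,\beta}_{\mathrm{loc}}$ give $\Phi_\omega\in\mathcal{C}^{2,\beta}_{\mathrm{loc}}$, and in particular $\Phi_\omega\in\mathcal{C}^2$. The only delicate point is the local $L^\infty$ bound at the origin in Step 1 when $d\ge3$; I would handle it via the standard subcritical bootstrap, using that the logarithm contributes only a sublinear-plus-slightly-superlinear term, which is dominated by the $|\Phi|^{2\sigma+1}$ growth.
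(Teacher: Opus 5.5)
Your proposal is correct and follows essentially the same route as the paper. The paper also uses the $H^1$-subcritical growth of the nonlinearity, through the one-sided bound $-\Delta\Phi_\omega\le q(\Phi_\omega)$ with $|q(y)|\lesssim 1+|y|^{2\sigma+1}$ and a Brezis--Kato-type argument taken from \cite{33}, to get $\Phi_\omega\in L^p_{\mathrm{loc}}$ for all $p<\infty$, and then appeals to standard elliptic regularity. Your Step 3 (H\"older continuity of $s\mapsto s\log s^2$, or $\Phi_\omega$ bounded below on compacts, so that Schauder theory applies despite the non-Lipschitz logarithm) makes explicit what the paper covers only by the phrase ``standard regularity arguments''.
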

\begin{proof} The idea is to use a technique similar to the one employed in the case of \eqref{NLSlog} (see \cite{14}). First, we remark that any non-negative solution of \eqref{050} satisfies
$$
-\Delta \Phi_{\omega} \leq q(\Phi_{\omega}), \quad \text{where} \quad q(\Phi_{\omega}) = -\omega \Phi_{\omega} + \Big(\Phi_{\omega} \log\Phi_{\omega}^2\Big)^+ -\Phi_{\omega}g(\abs{\Phi_\omega}^2).
$$
Since by assumption \ref{A1}-\ref{A2}, we have 
$$
|q(y)| \leq C_\sigma (1 + |y|^{2\sigma+1}) \quad \text{for all } y \in \mathbb{R},
$$
by repeating the argument of the proof of \cite[Lemma B.3]{33}, it is possible to show that 
$$
\Phi_{\omega} \in L^p_{\mathrm{\rm loc}}(\mathbb{R}^d) \quad \text{for all } p < \infty.
$$
Then, by standard regularity arguments applied to the equation \eqref{050}, we get $\Phi_{\omega}\in \mathcal{C}^2$.
\end{proof}

Now, we recall the standard Gaussian tail estimate.
\begin{lemma}\label{gausse}
    For any $m\in \mathbb{R}$ and any $r_0>0$, we have
$$
\int_{\abs{x}\geq r} \abs{x}^{m} e^{-\frac{(\abs{x}-r_0)^2}{2}}\mathrm{d}x
    \lesssim  r^{d+m-2} e^{-\frac{(r-r_0)^{2}}{2}},
    \qquad \forall r\ge \sqrt{2}+r_0.
$$
\end{lemma}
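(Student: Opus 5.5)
We pass to polar coordinates and reduce the estimate to a one-dimensional Gaussian tail bound. Write $|x|=s$, so that
$$
\int_{|x|\ge r}|x|^m e^{-\frac{(|x|-r_0)^2}{2}}\,\mathrm{d}x=|\mathbb{S}^{d-1}|\int_r^\infty s^{m+d-1}e^{-\frac{(s-r_0)^2}{2}}\,\mathrm{d}s .
$$
Set $k:=m+d-1$. The goal becomes
$$
I(r):=\int_r^\infty s^{k}e^{-\frac{(s-r_0)^2}{2}}\,\mathrm{d}s\lesssim r^{k-1}e^{-\frac{(r-r_0)^2}{2}},\qquad r\ge \sqrt2+r_0 .
$$
Note that $r^{d+m-2}=r^{k-1}$, so this is exactly the claimed exponent.

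The key step is an integration by parts that exploits $\frac{\mathrm{d}}{\mathrm{d}s}e^{-\frac{(s-r_0)^2}{2}}=-(s-r_0)e^{-\frac{(s-r_0)^2}{2}}$. We write
$$
s^k=\frac{s^k}{s-r_0}\,(s-r_0),
$$
which is legitimate since $s-r_0\ge\sqrt2>0$ on the domain. Integrating by parts then gives
$$
I(r)=\frac{r^k}{r-r_0}e^{-\frac{(r-r_0)^2}{2}}+\int_r^\infty \frac{\mathrm{d}}{\mathrm{d}s}\Big(\frac{s^k}{s-r_0}\Big)e^{-\frac{(s-r_0)^2}{2}}\,\mathrm{d}s .
$$
The boundary term at infinity vanishes by Gaussian decay. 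The derivative is
$$
\frac{k s^{k-1}}{s-r_0}-\frac{s^k}{(s-r_0)^2}.
$$

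To close the estimate, we bound the remainder by a fraction of $I(r)$ and absorb it into the left-hand side. Discarding the negative piece, the remainder is at most $\int_r^\infty \frac{|k|s^{k-1}}{s-r_0}e^{-\frac{(s-r_0)^2}{2}}\,\mathrm{d}s$. This is only a lower-order correction, but the factor $\frac{1}{s(s-r_0)}$ need not be small when $r_0$ is large relative to $r$. We therefore use the cruder route instead: for $s\ge r$,
$$
\frac{s^{k-1}}{s-r_0}\le \frac{s^k}{(s-r_0)^2}\cdot\frac{s-r_0}{s}\le \frac{1}{2}\,s^k\,\frac{1}{s-r_0}\cdot\frac{2}{s}.
$$
When $|k|$ is small, the negative term dominates. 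In general, we compare $\frac{|k|s^{k-1}}{s-r_0}$ with $\varepsilon s^k$. This holds once $s(s-r_0)\ge |k|/\varepsilon$, which is true for $s$ large.

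It is simplest to state the lemma with an implicit constant depending on $m$, $d$, and possibly $r_0$, since the paper's $\lesssim$ allows constants independent of time. With that convention, we split into two cases. For $r$ large, the remainder is at most $\frac12 I(r)$ and can be absorbed, giving $I(r)\le \frac{2r^k}{r-r_0}e^{-\frac{(r-r_0)^2}{2}}$. For $r$ in the compact range $[\sqrt2+r_0,R]$, the claim is trivial by continuity and positivity of both sides.

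It remains to convert $\frac{r^k}{r-r_0}$ into $r^{k-1}$. We need $\frac{r}{r-r_0}\lesssim 1$, which holds as soon as $r\ge 2r_0$. For $\sqrt2+r_0\le r\le 2r_0$, we have $\frac{r}{r-r_0}\le \frac{2r_0}{\sqrt2}$, again a constant depending on $r_0$.

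The main obstacle is this uniformity issue in $r_0$. The hypothesis $r\ge\sqrt2+r_0$ only guarantees $r-r_0\ge\sqrt2$, and hence $(r-r_0)^2\ge 2$. That is exactly what makes the term $\frac{s^k}{(s-r_0)^2}$ comparable to at most $\frac12 s^k$ in the $k\le 0$ case, where it can be absorbed. For $k>0$, we accept constants depending on $r_0$ and $k$, which is all that is needed later, since $r_0$ will be a fixed soliton-related quantity.
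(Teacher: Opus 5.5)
Your proof is correct, but it takes a different route from the paper.

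The paper shifts $\rho\mapsto\rho+r_0$ and bounds $(\rho+r_0)^{m+d-1}\lesssim_{r_0}\rho^{m+d-1}$ on $\rho\ge r-r_0\ge\sqrt2$. It then identifies the incomplete Gamma function $\Gamma\bigl(\frac{m+d}{2},\frac{(r-r_0)^2}{2}\bigr)$. For $y\ge1$ it bounds this by $y^{s-1}e^{-y}$, iterating the recursion $\Gamma(s,y)=y^{s-1}e^{-y}+(s-1)\Gamma(s-1,y)$ until the exponent is nonpositive. This is exactly where the threshold $\sqrt2$ enters (it gives $y\ge1$). The result is a constant, depending on $r_0$ and $m$ just as yours does, valid on the whole range $r\ge\sqrt2+r_0$ with no compactness step. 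The last step converts $(r-r_0)^{m+d-2}$ into $r^{m+d-2}$, as you do.

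You instead integrate by parts once, directly against the weight $s-r_0$. The remainder is at most $\frac{k}{r(r-r_0)}I(r)$, so you absorb it once $r$ is large, and you cover the bounded range $[\sqrt2+r_0,R]$ by continuity. This is more elementary and avoids both the shift and the Gamma function. The price is a non-explicit constant on the compact range. Iterating your integration by parts would recover the paper's uniform argument, since the Gamma recursion is precisely that iteration.

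Two points to clean up:
\begin{itemize}
\item The chain of inequalities in your ``cruder route'' consists of identities and proves nothing.
\item Your closing remark about absorbing in the case $k\le0$ is backwards. The remainder integrand is $\frac{s^{k-1}}{s-r_0}\bigl(k-\frac{s}{s-r_0}\bigr)$, and $\frac{s}{s-r_0}>1$, so it is nonpositive for every $k\le1$. For those $k$ you get $I(r)\le\frac{r^k}{r-r_0}e^{-\frac{(r-r_0)^2}{2}}$ directly, with no absorption. Absorption and the compactness step are needed only when $k>1$.
\end{itemize}
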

\begin{proof}
   By a polar change of variables, we have
\begin{align*}
    \int_{|x|\ge r} |x|^m e^{-(|x|-r_0)^2/2} \mathrm{d}x
= C_d \int_r^{\infty} \rho^{m+d-1} e^{-(\rho-r_0)^2/2} \mathrm{d}\rho&=C_d \int_{r-r_0}^{\infty} (\rho+r_0)^{m+d-1} e^{-\rho^2/2} \mathrm{d}\rho
\\&\leq C_d(r_0,|m|)  \Gamma\Big(\frac{m+d}{2}, \frac{(r-r_0)^2}{2}\Big),
\end{align*}
where, for $y \ge 1$ and $s \in \mathbb{R}$, $\Gamma$ is the incomplete Gamma function defined by
$$
\Gamma(s,y) = \int_y^\infty x^{s-1} e^{-x}  \mathrm{d}x.
$$
By induction, for all $\ell \in \mathbb{N}^*$, we have
$$
\Gamma(s,y) = \sum_{k=1}^{\ell} y^{s-k} e^{-y} +\Gamma(s-\ell, y) \prod_{k=1}^{\ell} (s-k)  .
$$
Now take $\ell = \lfloor s \rfloor + 1$ if $s-1>0$ and $\ell=1$ if $s-1\leq 0$ . Since $y \geq 1$, we deduce that
\begin{equation*}\label{gamma}
    \Gamma(s,y) \lesssim y^{s-1} e^{-y}.
\end{equation*}
Applying this inequality \eqref{gamma} with $y = \frac{(r-r_0)^2}{2}$ and $s = \frac{m+d}{2}$, we obtain
\begin{equation*}
    \int_{|x|\ge r} |x|^m e^{-(|x|-r_0)^2/2}  \mathrm{d}x
\lesssim r^{d+m-2} e^{-(r-r_0)^2/2}.\qedhere
\end{equation*}
\end{proof}
The following bounds on $\Phi_\omega$ are then established.
\begin{lemma}\label{011}
Let $\omega \in \mathbb{R}$ be such that $\Phi_\omega$ exists. Then there exist constants $C_1, C_2, r_0, \delta > 0$ such that, for every $r > 0$, the radial solution $\Phi_\omega$ of~\eqref{050} satisfies
$$
    C_1e^{-\delta r^2}\leq
    \Phi_{\omega}(r)
    \leq
    C_2 e^{-\frac{(r-r_0)^{2}}{2}} .
$$
\end{lemma}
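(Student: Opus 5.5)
The plan is to compare $\Phi_\omega$ with explicit Gaussian sub- and supersolutions of the radial equation
\[
-\Delta_r \Phi + \omega\Phi - \Phi\log\Phi^2 - \Phi g(\Phi^2)=0
\]
on an exterior region $\{r\ge R\}$, and then use the maximum principle. Positivity, radial symmetry and monotonicity come from Theorem~\ref{06}, and $\mathcal{C}^2$ regularity from the preceding lemma. Since $\Phi_\omega\in H^1$ is radially decreasing, we have $\Phi_\omega(r)\to0$ as $r\to\infty$. By \ref{A1} and continuity, $g(\Phi_\omega^2(r))\to 0$. Hence on $\{r\ge R\}$, for $R$ large, the equation reads
\[
\Delta_r\Phi_\omega = V(r)\,\Phi_\omega,\qquad V(r):=\omega-\log\Phi_\omega^2-g(\Phi_\omega^2),
\]
and $-\log\Phi_\omega^2$ dominates: $V(r)\to+\infty$.

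\textbf{Upper bound.} Take $\psi(r)=A e^{-(r-r_0)^2/2}$. A direct computation gives
\[
\Delta_r\psi=\Big((r-r_0)^2-1-\tfrac{(d-1)(r-r_0)}{r}\Big)\psi,
\]
while $-\log\psi^2=(r-r_0)^2-2\log A$. Set $w=\Phi_\omega-\psi$. On the set where $w>0$ we have $\Phi_\omega>\psi$, so $-\log\Phi_\omega^2<-\log\psi^2$. This monotonicity has the wrong sign for a direct comparison, so instead I would work in the variable $\phi=-\log\Phi_\omega$ (or use the function $\Phi_\omega\log\Phi_\omega$, as in the logNLS literature). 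The cleaner route is an energy/ODE argument. Multiply the radial ODE by $\Phi_\omega'$. Since $\Phi_\omega'<0$, one obtains for large $r$
\[
\Phi_\omega'' \ge \Big(\tfrac12\log\tfrac{1}{\Phi_\omega^2}-C\Big)\Phi_\omega,
\]
which yields $(\Phi_\omega')^2\gtrsim \Phi_\omega^2\log(1/\Phi_\omega^2)$ after integrating from $r$ to $\infty$, with boundary terms vanishing by the decay of $\Phi_\omega,\Phi_\omega'$. Equivalently,
\[
\frac{d}{dr}\sqrt{\log(1/\Phi_\omega^2)}\ \ge\ c,
\]
and the sharp constant should come out as $c=1$ up to lower order. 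The $(d-1)/r$ term and $g$ contribute only $o(1)$ relative corrections, and these are absorbed by shifting $r_0$. Integrating gives $\log(1/\Phi_\omega)\ge (r-r_0)^2/2-C$, i.e.\ the upper bound for $r\ge R$. For $r\le R$ the bound is trivial since $\Phi_\omega\le\Phi_\omega(0)$, after enlarging $C_2$.

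\textbf{Lower bound.} Use the maximum principle with the subsolution $\psi_-(r)=\varepsilon e^{-\delta r^2}$ for large $\delta$. We have $\Delta_r\psi_-=(4\delta^2r^2-2\delta d)\psi_-$. The upper bound gives $V(r)\le \omega+C+(r+r_0)^2$ up to constants, hence $V(r)\le 2r^2+C$, using $\Phi_\omega\ge$ something only through the sign. More simply, $-\log\Phi_\omega^2$ could be large, so instead compare on the set $\{w:=\psi_--\Phi_\omega>0\}$. There $\Phi_\omega<\psi_-$, so $-\log\Phi_\omega^2>-\log\psi_-^2$, and this direction again fails. So I will use the substitution $\Phi_\omega=e^{-\phi}$, giving
\[
\Delta_r\phi=|\phi'|^2-\omega-2\phi+g(\Phi_\omega^2).
\]
The goal is $\phi\le \delta r^2+C$. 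If $\phi-\delta r^2$ had a large interior maximum at $r_*\ge R$, then $\phi'=2\delta r_*$ and $\Delta_r\phi\le 2\delta d$ there. This yields $4\delta^2r_*^2-\omega-2\phi+o(1)\le 2\delta d$, which forces $\phi(r_*)\ge 2\delta^2 r_*^2-C$. Hence a bound $\phi\le\delta r^2$ requires ruling out growth that is not handled purely by maximum; I would combine it with the ODE monotonicity ($\phi$ increasing, $\phi''$ controlled) to conclude. Alternatively, integrate the energy identity the other way to get $(\Phi_\omega')^2\lesssim \Phi_\omega^2(\log(1/\Phi_\omega^2)+C)$, i.e.\ $\frac{d}{dr}\sqrt{\phi}\le C'$, giving $\phi\le C'' r^2$. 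Since the statement allows any $\delta$, this second route is the one I would actually use.

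\textbf{Main obstacle.} The main obstacle is making the energy-identity integration rigorous, in particular controlling the dimensional term $\frac{d-1}{r}\Phi_\omega'$ with a sign and the limits at infinity, so that the Gaussian rate exactly $1/2$ in the upper bound is obtained. The key is that this term is lower order and can be absorbed into the shift $r_0$.
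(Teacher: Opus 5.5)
Write the profile equation as $\Delta_r\Phi_\omega=f(\Phi_\omega)$, with $f(s)=\omega s-s\log s^2-sg(s^2)$ and $F(s)=\int_0^s f$.

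\textbf{Upper bound.} Your argument is the paper's, up to two slips. The paper uses $\Phi_\omega'<0$ to drop $\frac{d-1}{r}\Phi_\omega'$, getting $\Phi_\omega''\ge f(\Phi_\omega)$. It then multiplies by $\Phi_\omega'$, integrates over $[r,\infty)$, and integrates $\frac{d}{dr}\sqrt{\omega-\log\Phi_\omega^2}\ge 1$.

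First, the equation gives $\Phi_\omega''\ge(\log(1/\Phi_\omega^2)-C)\Phi_\omega$ with no factor $\tfrac12$. With your $\tfrac12$, the same computation only yields $e^{-(r-r_0)^2/4}$.

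Second, the exact rate $\tfrac12$ does not come from absorbing $o(1)$ corrections into $r_0$. A slope defect $\epsilon(r)$ costs $\int^r\epsilon$, which can be unbounded (e.g.\ $\log r$ for $\epsilon=1/r$). The rate comes from discarding the dimensional term by its sign, and from keeping the constants coming from $\omega$ and $g$ inside the square root. The differential inequality then holds with constant exactly $1$.

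\textbf{Lower bound: this is the genuine gap.} The route you settle on, integrating the energy identity ``the other way'', does not give $(\Phi_\omega')^2\lesssim\Phi_\omega^2(\log(1/\Phi_\omega^2)+C)$. The exact identity is
\[
\tfrac12(\Phi_\omega')^2(r)=F(\Phi_\omega(r))+\int_r^\infty \frac{d-1}{s}\,\Phi_\omega'(s)^2\,ds .
\]
In this direction the dimensional term has the unfavorable sign (and $d\ge 2$ in the existence result). You would have to bound the last integral by $C\,\Phi_\omega(r)^2\log(1/\Phi_\omega(r)^2)$, and you do not address this.

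Integrating from a fixed $R$ instead only gives $(\Phi_\omega')^2\le 2F(\Phi_\omega)+K$, with a constant $K>0$ not proportional to $\Phi_\omega^2$. That says nothing as $\Phi_\omega\to0$. The step can be repaired, for instance from $(r^{2(d-1)}\mathcal E)'=-2(d-1)r^{2d-3}F(\Phi_\omega)$ with $\mathcal E=\tfrac12(\Phi_\omega')^2-F(\Phi_\omega)$, combined with $-\Phi_\omega'/\Phi_\omega\to\infty$ from the upper-bound step. But that argument is missing, so as written the lower bound is unproved.

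\textbf{The maximum principle does work.} The comparison you dismissed is exactly the paper's proof. Your objection compares the potentials $V(s)=\omega-\log s^2-g(s^2)$, but the comparison principle only needs monotonicity of the full nonlinearity $f(s)=sV(s)$. By (A1),
\[
f'(s)=\omega-2-\log s^2-g(s^2)-2s^2g'(s^2)\to+\infty \quad\text{as } s\to0^+.
\]
The paper proceeds as follows.
\begin{itemize}
\item It checks that $K_\delta=e^{-\delta r^2}$ is a subsolution for $r\ge1$ and $\delta$ large: the residual divided by $K_\delta$ is $2d\delta+\omega+(2\delta-4\delta^2)r^2-g(K_\delta^2)\le 0$.
\item By the mean value theorem, $v=\Phi_\omega-K_\delta$ satisfies $-\Delta_r v+q v\ge0$ with $q=f'(\xi)>0$ on $[r_0,\infty)$. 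Here $r_0$, independent of $\delta$, is chosen so that $\Phi_\omega$ and $K_{\delta_1}$ lie below the threshold where $f'>0$.
\item Choose $\delta$ large enough that $v(r_0)\ge0$. Since $v\to0$ at infinity, a negative interior minimum of $v$ is impossible. Hence $\Phi_\omega\ge e^{-\delta r^2}$ on $[r_0,\infty)$.
\item Monotonicity of $\Phi_\omega$ then handles $(0,r_0]$.
\end{itemize}
The same reasoning would also give your upper bound directly, using the supersolution $Ae^{-(r-R)^2/2}$ on $[R,\infty)$ with $A$ small and $R$ large.
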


\begin{proof}
We first prove the lower bound.  
Let $\delta>0$ and define
$$
   K_\delta(r) = e^{-\delta r^2}.
$$
A direct computation gives
\begin{align*}
    D(r):=\frac{-\Delta_r K_\delta + \omega K_\delta -  K_\delta\log(K_\delta^2) -  K_\delta g(\abs{K_\delta}^2)}{K_\delta}&=\Bigl(2d\delta+\omega\Bigr)
+\Bigl(2\delta-4\delta^2\Bigr) r^2
-
g(e^{-2\delta r^2})\\&\leq \Bigl(2d\delta+\omega\Bigr)
+\Bigl(2\delta-4\delta^2\Bigr) r^2
+C.
\end{align*}
There exist $\delta_1$ such that, for all $\delta> \delta_1$ and all $r\geq 1$, we have $$D(r)\leq 0.$$
Hence, for $r\geq 1$,
$$
-\Delta_r K_\delta + \omega K_\delta -  K_\delta\log(K_\delta^2) 
-  K_\delta g(\abs{K_\delta}^2) \le 0,
$$
while $\Phi_{\omega}$ satisfies
$$
-\Delta_r \Phi_{\omega} + \omega \Phi_{\omega} -  \Phi_{\omega}\log\Phi_{\omega}^2 -  \Phi_\omega g(\Phi_{\omega}^{2}) = 0.
$$
Define $v(r)=\Phi_{\omega}(r)-K_\delta(r)$. Then
$$
-\Delta_r v 
   +\omega v
   -\bigl(\Phi_{\omega}\log\Phi_{\omega}^2 - K_\delta\log(K_\delta^2)\bigr)
   - \bigl(\Phi_\omega g(\abs{\Phi_\omega}^2)-K_\delta g(\abs{K_\delta}^2)\bigr)
   \ge 0.
$$
Let
$$
H(s) = s \log s^2 +  sg(s^2).
$$
Since $H$ is continuous on $[0,+\infty)$ and of class $\mathcal{C}^1$ on $(0,+\infty)$, the mean value theorem implies that there exists
$$
\xi(r)\in (\min(\Phi_{\omega}(r),K_\delta(r)),\max(\Phi_{\omega}(r),K_\delta(r))),
$$
such that
$$
H(\Phi_{\omega}(r)) - H(K_\delta(r))
    = H'(\xi(r))(\Phi_{\omega}(r)-K_\delta(r)).
$$
Set
$$
q(r)=\omega - H'(\xi(r))=\omega-2-2\log(\xi(r))- (g(\xi(r)^2)+2\xi(r)^2g'(\xi(r)^2)).
$$
Then
$$
-\Delta_r v + q(r) v \ge 0.
$$
On the other hand, by assumption \ref{A1}--\ref{A2}, we have  $$
h(s):=\omega-2-2\log(s)- (g(s^2)+2 s^2g'(s^2))\geq\omega-2-2\log(s)-C (s^{2\sigma}+1),$$ then there exists $s_0>0$ such that, for all $0<s<s_0$,  
$$
h(s)>0,
$$
Consequently, for all
$
r>\sqrt{ \frac{1}{\delta_1}|\log(s_0)|},
$
we have
$$
K_\delta(r)\le K_{\delta_1}(r)<s_0.
$$
Since $\Phi_{\omega}(r)\to 0$ as $r\to +\infty$, there exists $r_1>0$ (independent of $\delta$) such that
$$
\Phi_{\omega}(r)<s_0 \qquad \text{for all } r\ge r_1.
$$
We then set
$$
r_0:=\max\left\{\sqrt{ \frac{1}{\delta_1}|\log(s_0)|},r_1\right\}+1.
$$
Therefore, $$
q(r)>0 \qquad \text{for all } r\ge r_0.
$$
We fixe $\delta > \delta_1$ such that $v(r_0) \ge 0$. Then, by the maximum principle, we deduce that
$$
\Phi_{\omega}(r)\ge K_\delta(r)=e^{-\delta r^2} \qquad  \text{ for all }r\ge r_0.
$$
By the decay of $r\mapsto\Phi_\omega(r)$, we deduce that for all $r \leq R_0$, we have $$\Phi_\omega(r)\geq\Phi_\omega(r_0)\geq \Phi_\omega(R_0)e^{\delta r_0^2}e^{-\delta r^2}.$$ 
Finally, for all $r> 0$, we get
$$
\Phi_{\omega}(r)\ge C_1e^{-\delta r^2} .
$$

The upper bound is proved directly using the elliptic equation
$$
-\Phi_{\omega}'' - \frac{d-1}{r}\Phi_{\omega}' + \omega \Phi_{\omega} - \Phi_{\omega} \log\Phi_{\omega}^2 - \Phi_{\omega} g(\abs{\Phi_{\omega}}^2) = 0.
$$
Using $\Phi_{\omega}'(r)<0$, we obtain
$$
-\Phi_{\omega}'' + \omega \Phi_{\omega} - \Phi_{\omega} \log\Phi_{\omega}^2 - \Phi_{\omega}g(\Phi_\omega^2) \leq 0.
$$
On the other hand, for sufficiently large $r$, we have $ g(\Phi_\omega^2)\leq 1$, hence
$$
-\Phi_{\omega}'' + \omega  \Phi_{\omega} - \Phi_{\omega} \log\Phi_{\omega}^2 \leq  \Phi_\omega g(\Phi_\omega^2)\leq \Phi_{\omega}.
$$
Multiplying this inequality by $\Phi_{\omega}' < 0$, we get
$$
-((\Phi_{\omega}')^2)' + (\omega - 1)(\Phi_{\omega}^2)' - (\Phi_{\omega}^2 \log\Phi_{\omega}^2 - \Phi_{\omega}^2)' \geq 0.
$$
Integrating this inequality over $[r, \infty)$, we deduce
$$
(\Phi_{\omega}')^2 - (\omega - 1) \Phi_{\omega}^2 + (\Phi_{\omega}^2 \log\Phi_{\omega}^2 - \Phi_{\omega}^2) \geq 0.
$$
Thus,
$$
(\Phi_{\omega}')^2 \geq \omega \Phi_{\omega}^2 - \Phi_{\omega}^2 \log\Phi_{\omega}^2.
$$
Taking the square root and dividing by $\Phi_{\omega} > 0$, we obtain, for $r\geq r_0$ such that $\omega-2\log\Phi_{\omega}\geq 0,$
$$
-\frac{\Phi_{\omega}'}{\Phi_{\omega}} \geq \sqrt{\omega - 2 \log(\Phi_{\omega})}.
$$
Hence,
$$
\frac{\rm d}{\mathrm{d}r} \left( \sqrt{\omega - 2 \log(\Phi_{\omega})} \right) \geq 1,
$$
which implies
$$
\sqrt{\omega - 2 \log(\Phi_{\omega}(r))} \geq r -r_0+\sqrt{\omega-2\log(\Phi_\omega(r_0))}\geq r-r_0,
$$
Consequently, for sufficiently large $r$,
$$
\Phi_{\omega}(r) \leq C' e^{-\frac{(r-r_0)^2}{2}}.
$$
Moreover, by continuity of the function $r \mapsto \Phi_{\omega}(r) e^{\frac{(r-r_0)^2}{2}}$, we finally deduce
\begin{equation*}
    \Phi_{\omega}(r) \leq C_2e^{-\frac{(r-r_0)^2}{2}},\quad \text{for all}~~ r>0.\qedhere
\end{equation*}
\end{proof}
\begin{corollary}\label{0111}
There exist $r_0>0$ such that for every $r>0$, the radial solution $\Phi_{\omega}$ of \eqref{050} satisfies
$$
    |\Phi_{\omega}'(r)| \lesssim r e^{-\frac{(r-r_0)^2}{2}} \quad \text{ and } \quad |\Phi_{\omega}''(r)| \lesssim (1+r^2) e^{-\frac{(r-r_0)^2}{2}}.
$$

\end{corollary}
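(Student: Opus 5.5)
We plan to derive both bounds from the radial ODE
$$
\Phi_\omega''+\frac{d-1}{r}\Phi_\omega'=\omega\Phi_\omega-\Phi_\omega\log\Phi_\omega^2-\Phi_\omega g(\Phi_\omega^2),
$$
using the pointwise upper bound of Lemma~\ref{011} for $\Phi_\omega$. The model case is the Gausson, whose derivatives are exactly of size $r e^{-r^2/2}$ and $(1+r^2)e^{-r^2/2}$. Throughout, we may enlarge $r_0$ freely, since $e^{-(r-r_0)^2/2}$ only increases when $r_0$ increases (for $r\ge r_0$; on bounded sets all quantities are bounded).

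\textbf{Right-hand side.} First we bound $F(r):=\omega\Phi_\omega-\Phi_\omega\log\Phi_\omega^2-\Phi_\omega g(\Phi_\omega^2)$.
\begin{itemize}
\item Since $\Phi_\omega$ is bounded and $g$ is continuous, the terms $\omega\Phi_\omega$ and $\Phi_\omega g(\Phi_\omega^2)$ are $\lesssim \Phi_\omega$.
\item For the logarithmic term we use $|\log\Phi_\omega^2|\lesssim 1+\delta r^2$, where the bound from below comes from the lower estimate $\Phi_\omega\ge C_1e^{-\delta r^2}$ of Lemma~\ref{011}.
\end{itemize}
Hence $|F(r)|\lesssim (1+r^2)\Phi_\omega(r)\lesssim (1+r^2)e^{-(r-r_0)^2/2}$.

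\textbf{First derivative.} Integrate the equation in divergence form, $(r^{d-1}\Phi_\omega')'=r^{d-1}F$. Because $\Phi_\omega\in\mathcal C^2$ and is radial, $\Phi_\omega'(0)=0$, and so
$$
\Phi_\omega'(r)=r^{1-d}\int_0^r s^{d-1}F(s)\,ds .
$$
This is convenient near $0$, but for large $r$ the integral from $0$ does not decay. We therefore use a different representation for large $r$. Since $\Phi_\omega\in H^1$ and is monotone, together with the energy-type identity from the proof of Lemma~\ref{011}, we get $\Phi_\omega'(r)\to0$. (Alternatively, $r^{d-1}\Phi_\omega'(r)\to 0$ along a sequence, because $\Phi_\omega'\in L^2(r^{d-1}dr)$.) This gives
$$
\Phi_\omega'(r)=-r^{1-d}\int_r^\infty s^{d-1}F(s)\,ds .
$$
By Lemma~\ref{gausse} with $m=d+1$ (in radial form), the integral is $\lesssim r^{d}e^{-(r-r_0)^2/2}$. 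Hence $|\Phi_\omega'(r)|\lesssim r e^{-(r-r_0)^2/2}$ for $r\ge\sqrt2+r_0$. For $r\le\sqrt2+r_0$, the first formula gives $|\Phi_\omega'(r)|\lesssim r\sup|F|\lesssim r$, which is compatible with the claimed bound on this compact range.

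\textbf{Second derivative.} We use $\Phi_\omega''=F-\frac{d-1}{r}\Phi_\omega'$ and split into two ranges.
\begin{itemize}
\item For $r\ge1$: $|\Phi_\omega''|\lesssim (1+r^2)e^{-(r-r_0)^2/2}+e^{-(r-r_0)^2/2}$, as required.
\item For $r\le1$: from the integral formula, $|\Phi_\omega'(r)/r|\le \frac1r\, r^{1-d}\int_0^r s^{d-1}|F|\lesssim \sup|F|$, so $\Phi_\omega''$ is bounded there.
\end{itemize}

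\textbf{Main obstacle.} The delicate point is the behavior at infinity, namely justifying that $r^{d-1}\Phi_\omega'(r)\to0$, so that the tail representation holds. We expect this to follow from the first-order inequality $(\Phi_\omega')^2\ge\ldots$ obtained in the proof of Lemma~\ref{011}, or from integrability. A second point to watch is that the logarithm only contributes an $r^2$ factor; this is exactly why the lower Gaussian bound from Lemma~\ref{011} is needed.
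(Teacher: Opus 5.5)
\textbf{Gap: the tail limit $r^{d-1}\Phi_\omega'(r)\to0$ is not justified for $d\ge3$.} Your overall route is the paper's. You write the equation as $(r^{d-1}\Phi_\omega')'=r^{d-1}F$, bound $|F|\lesssim(1+r^2)e^{-(r-r_0)^2/2}$ with the $r^2$ coming from the logarithm, integrate from $r$ to $\infty$, apply Lemma~\ref{gausse}, and read off $\Phi_\omega''$ from the ODE. Your treatment of $r$ near $0$ via $\Phi_\omega'(0)=0$ is more careful than the paper's.

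The problem is the step you flag yourself: neither justification yields $\lim_{r\to\infty}r^{d-1}\Phi_\omega'(r)=0$ in general dimension. Knowing $\Phi_\omega'(r)\to0$ says nothing about this limit when $d\ge2$, since $\Phi_\omega'\sim \ell r^{1-d}$ tends to $0$ for every $\ell$. Also, $\Phi_\omega'\in L^2(r^{d-1}\,dr)$ does not force $r^{d-1}\Phi_\omega'\to0$ along a sequence when $d\ge3$. Indeed, if $r^{d-1}|\Phi_\omega'|\to|\ell|>0$, then $r^{d-1}|\Phi_\omega'|^2\sim\ell^2r^{1-d}$, which is integrable at infinity. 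So for $d\ge3$ the tail representation $\Phi_\omega'(r)=-r^{1-d}\int_r^\infty s^{d-1}F(s)\,ds$ is not justified as written.

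\textbf{Repair (the paper's argument).} Use the decay of $\Phi_\omega$ itself, not of $\Phi_\omega'$. Your bound on $F$ gives $s^{d-1}F\in L^1(0,\infty)$. Hence, from your formula near $0$, the limit $\ell=\lim_{r\to\infty}r^{d-1}\Phi_\omega'(r)=\int_0^\infty s^{d-1}F(s)\,ds$ exists and is finite. Suppose $\ell\neq0$. Then for $s\ge R$, $\Phi_\omega'(s)$ has constant sign and $|\Phi_\omega'(s)|\ge\frac{|\ell|}{2}s^{1-d}$. Since $\Phi_\omega(\infty)=0$, for $r\ge R$
\[
\Phi_\omega(r)=\Bigl|\int_r^\infty\Phi_\omega'(s)\,ds\Bigr|\ge\frac{|\ell|}{2}\int_r^\infty s^{1-d}\,ds .
\]
The right-hand side is infinite for $d\le2$, which is absurd. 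For $d\ge3$ it is $\gtrsim r^{2-d}$, contradicting the Gaussian upper bound of Lemma~\ref{011}. Hence $\ell=0$, and the rest of your argument goes through unchanged.

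\textbf{Side remark.} You do not actually need the lower bound of Lemma~\ref{011} for the logarithmic term. The map $x\mapsto x|\log x^2|$ is increasing on $(0,e^{-1})$, so the upper bound alone gives $\Phi_\omega|\log\Phi_\omega^2|\lesssim(1+r^2)e^{-(r-r_0)^2/2}$ for large $r$.
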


\begin{proof}
Since $\Phi_{\omega}$ is continuous on $(0,+\infty)$, it suffices to prove the estimate for $r$ sufficiently large.
\\
For a radial function, we have
$$
    (r^{d-1}\Phi_{\omega}'(r))' = r^{d-1} f(\Phi_{\omega}(r)),
$$
where
$$
    f(\Phi_{\omega}) = -\Phi_{\omega} \log\Phi_{\omega}^{2} - \Phi_\omega g(\Phi_\omega^2).
$$
Using Lemma \ref{011}, we obtain
\begin{equation}\label{k}
    |g(\Phi_{\omega}(r))|
   \lesssim \Big(|\omega| + \Phi_{\omega}^{2\sigma}(r) + \abs{\log(\Phi_{\omega}^{2}(r))}\Big)\Phi_{\omega}(r)
   \lesssim (1+r^{2}) e^{-\frac{(r-r_0)^2}{2}}.
\end{equation}
Since the function $r \mapsto r^{d-1}g(\Phi_{\omega}(r))$ is integrable on $(0,\infty)$, then 
$
\ell := \lim_{r\to\infty} r^{d-1}\Phi_{\omega}'(r)>-\infty
$.
By the Gaussian decay, we must have $\ell=0.$
Consequently,
$$
|\Phi_{\omega}'(r)|
  \
  \lesssim
  \frac{1}{r^{d-1}} \int_{r}^{\infty} t^{d+1} e^{-\frac{(t-r_0)^2}{2}}\mathrm{d}t.
$$
Thanks to Lemma \ref{gausse} applied with $m=d+1$, we get
$$
|\Phi_{\omega}'(r)| \lesssim r e^{-\frac{(r-r_0)^2}{2}}.
$$
This completes the proof of the first estimate.\\
For the second estimate, we use \eqref{k} together with the bound obtained above and the equation
\begin{equation}
    \Phi_{\omega}''(r)
    = -\frac{d-1}{r}\Phi_{\omega}'(r) + f(\Phi_{\omega}(r)),
\end{equation}
which concludes the proof.
\end{proof}
\section{\texorpdfstring{Proof of the main result}{Proof of the main result}}\label{3}

Inspired by \cite{15,22,12}, the construction of a solution $u(t)$ satisfying the conclusion of Theorem \ref{09} is based on an approximate argument.\\
Let $(T_n)_{n\ge 1}$ be an increasing sequence of positive times satisfying $T_n \to +\infty$ as $n\to\infty$. \\
We consider $u_n$ the unique global solution $W_1(\mathbb{R}^d)$ given by Theorem \ref{100} (which approximates a
multi-soliton) of
\begin{equation}\label{030}
\begin{cases}
i\partial_t u_n + \Delta u_n + \lambda u_n \log|u_n|^2
+ u_ng(\abs{u_n}^2)  = 0,
\qquad (t,x)\in \mathbb{R}\times\mathbb{R}^d, \\[4pt]
u_n(T_n) = Q(T_n).
\end{cases}
\end{equation}

In \cite{22,12,15}, the proof of the construction relies on two main propositions: uniform estimates and a compactness argument.
The uniform backwards $H^1$--estimates 
for the solution are obtained by exploiting the slow variation 
of localized conservation laws and the $H^1$ coercivity 
of the action around $Q$, up to an $L^2$-norm term. These estimates are then established via a bootstrap argument. 
The compactness argument is obtained by applying a suitable cut-off technique.

\begin{proposition}[Uniform Estimates]\label{04}
  There exists $\Upsilon_0 > 0$ such that, for all $t \in [\Upsilon_0, T_n]$,
$$
\| u_n(t) - Q(t) \|_{H^1} \le \exp\!\Biggl( -\frac{(v_\star (t-\Upsilon_0))^2}{8} \Biggr).
$$

\end{proposition}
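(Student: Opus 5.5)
The plan is the bootstrap argument of Martel--Merle and Ferriere, adapted to the mixed nonlinearity. Set $w_n := u_n - Q$, so that $w_n(T_n)=0$. For $\Upsilon_0$ large, to be fixed, define
$$t^\ast := \inf\Big\{ t\in[\Upsilon_0,T_n] \;:\; \norm{w_n(s)}_{H^1}\le \exp\!\big(-\tfrac{(v_\star(s-\Upsilon_0))^2}{8}\big) \text{ for all } s\in[t,T_n]\Big\}.$$
We will show that on $[t^\ast,T_n]$ the bound actually holds with constant $\tfrac12$. By continuity of $t\mapsto u_n(t)$ in $H^1$ this forces $t^\ast=\Upsilon_0$, with $\Upsilon_0$ independent of $n$.

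\textbf{Step 1: interaction estimates.} First I would control the soliton interactions using the Gaussian bounds of Lemma \ref{011} and Corollary \ref{0111}. Since $|v_j-v_k|\ge v_\star$, the centers separate linearly in time. Combining this with Lemma \ref{gausse}, the overlaps satisfy
$$\int |Q_j||Q_k| + |\nabla Q_j||\nabla Q_k| \lesssim e^{-(v_\star t)^2/4 + Ct},$$
and likewise $\int |Q_j|\,|\log|Q_k|^2|\,|Q_k|$ is bounded. It follows that the defect $\mathcal{R}:=i\partial_tQ+\Delta Q+Q\log|Q|^2+Qg(|Q|^2)$ obeys $\|\mathcal{R}(t)\|_{H^1}\lesssim e^{-(v_\star t)^2/4+Ct}$. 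One subtlety comes from the logarithm: $\log|\sum_j Q_j|^2-\log|Q_j|^2$ is singular near zeros of $Q$. I would handle this by a pointwise splitting into regions where one soliton dominates, bounding $|z\log|z|^2-w\log|w|^2|\lesssim |z-w|(1+|\log|z||+|\log|w||)$ there, and using the Gaussian lower bound of Lemma \ref{011} to control $\log|Q_j|$ polynomially in $|x|$.

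\textbf{Step 2: localized action.} Next I would introduce cutoffs $\psi_j(t,x)$ adapted to the moving frames, with $\sum_j\psi_j=1$ and each $\psi_j$ equal to $1$ near $x_j+v_jt$. From these, build the functional
$$\mathcal{S}(t)=E(u_n)+\sum_j\Big(\big(\omega_j+\tfrac{|v_j|^2}{4}\big)\int\psi_j|u_n|^2-v_j\cdot\Im\int\psi_j\bar u_n\nabla u_n\Big).$$
The localized mass and momentum vary slowly. Their time derivatives involve only $\nabla\psi_j$, which is supported in the region between solitons, where $Q$ is Gaussian-small; this gives $|\mathcal{S}'(t)|\lesssim \norm{w_n}_{H^1}^2/t+e^{-(v_\star t)^2/4+Ct}$ under the bootstrap. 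Expanding $\mathcal{S}(u_n)-\mathcal{S}(Q)$ to second order in $w_n$ gives $\tfrac12\sum_j\langle L_jw_n,w_n\rangle_{\psi_j}$ plus cubic and interaction errors. Here $L_j$ is the linearized operator around $Q_j$.

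\textbf{Step 3: coercivity and closing.} We have no uniqueness and no nondegeneracy for $\Phi_\omega$, so I would not use modulation. Instead, as in Côte--Le Coz and Ferriere, I would prove the weaker coercivity $\langle L_jw,w\rangle\ge c\|w\|_{H^1}^2-C\|w\|_{L^2}^2$, and control the $L^2$ part separately. In the logarithmic term this coercivity comes from the contribution $-\int\log|Q_j|^2|w|^2$, which is \emph{positive} and large where $Q_j$ is small. The $L^2$ norm of $w_n$ is then estimated directly: $\frac{d}{dt}\|w_n\|_2^2=2\Im\int (\text{nonlinear difference})\bar w_n+\text{defect}$. The logarithmic difference is handled by the Cazenave--Haraux inequality $|\Im((z\log|z|^2-w\log|w|^2)(\bar z-\bar w))|\le 2|z-w|^2$. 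The $g$-part is handled by \ref{A1}--\ref{A2}, which make it locally Lipschitz in $H^1$. Integrating backward from $T_n$ and using $w_n(T_n)=0$, both the $L^2$ bound and $\mathcal{S}(t)-\mathcal{S}(T_n)$ are dominated by $\int_t^{T_n}e^{-(v_\star s)^2/4+Cs}$-type terms together with Gronwall factors. These are $\ll\tfrac12 e^{-(v_\star(t-\Upsilon_0))^2/8}$ once $\Upsilon_0$ is large, because the bootstrap exponent carries the factor $1/8$ rather than $1/4$.

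The main obstacle is Step 3's second-order expansion of the logarithmic part. The map $u\mapsto u\log|u|^2$ is not $C^2$, and the Taylor remainder $\int |u|^2\log|u|^2-|Q|^2\log|Q|^2-\ldots$ is not controlled by $\|w\|_{H^1}^3$. I would first try the convexity-type pointwise estimate used by Ferriere: bound the remainder below by $-C|w|^2$ plus $|w|^2\log$ terms, absorb the latter using the Gaussian lower bound on $Q_j$ near the centers, and far from the centers use the sign of $-|w|^2\log|w|^2$ only where $|w|\le1$.
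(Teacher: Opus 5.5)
The gap is in how you close the bootstrap, i.e.\ where the small factor in front of the terms quadratic in $w_n$ comes from.

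\textbf{The factor $1/t$ in $\mathcal S'$ is not available.} You take the smallness from $|\mathcal S'(t)|\lesssim\|w_n\|_{H^1}^2/t+\dots$, but $|\nabla\psi_j|\lesssim 1/t$ forces the transition region of $\psi_j$ to have length $\gtrsim t$ along the segment joining $x_j^*(t)$ to a centre at distance $\approx v_\star t$. That region then comes within distance $at$, $a<v_\star/2$, of one of the centres. The best bound on $\int(|Q|^2+|\nabla Q|^2)|\nabla\psi_j|$ is then $O(e^{-a^2t^2})$ up to polynomial factors. This is not $\lesssim e^{-(v_\star(t-\Upsilon_0))^2/4}$ for large $t$. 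Keeping the exact Gaussian rate requires layers of bounded width at distance $\frac{v_\star(t-\Upsilon_1)}{2}+O(1)$ from the centres, as in the paper. Hence $\mathcal S'$ contains $\|w_n\|_{H^1}^2$ with an $O(1)$ coefficient.

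\textbf{Your choice of shift removes the remaining source of smallness.} You put the shift in the exponent equal to the initial time, so the bootstrap bound equals $1$ at $t=\Upsilon_0$. Integrating it gives $\int_{\Upsilon_0}^{T_n}e^{-(v_\star(s-\Upsilon_0))^2/4}\,ds$, which is of order $1/v_\star$ whatever $\Upsilon_0$ is. Also, the superquadratic remainders $\int|w|^2\log(1+|w|)\psi_j$ and $\int|w|^{2\sigma+2}\psi_j$ cannot be absorbed into the kinetic term, since nothing makes $\|w_n\|_{H^1}$ small near $\Upsilon_0$.

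\textbf{The ``$1/8$ versus $1/4$'' cushion is not there.} The logarithm is not flat at $0$, so the defect is linear in the distant soliton: near $x_j^*(t)$ it has size $|Q_k|\bigl(1+\bigl|\log(|Q_j|^2/|Q_k|^2)\bigr|\bigr)$. Thus $\|\mathcal R(t)\|_{L^2}$ is of order $e^{-(v_\star t)^2/8}$ up to polynomial factors (compare Lemmas~\ref{logest} and~\ref{est6}), not $e^{-(v_\star t)^2/4}$. The exponent $1/4$ only appears after pairing $\mathcal R$ with $w_n$.

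\textbf{The missing ingredient is the paper's separation of two times.} Fix $\Upsilon_1$ as in \eqref{time} and run the bootstrap (Lemma~\ref{05}) with rate $e^{-(v_\star(t-\Upsilon_1))^2/8}$ on $[\Upsilon_0,T_n]$, where $\Upsilon_0=\Upsilon_1+\Upsilon_*$ and $\Upsilon_*$ is large. Then $t-\Upsilon_1\ge\Upsilon_*$ on the whole interval, which gives three things.
\begin{itemize}
\item Every integral satisfies $\int_t^{\infty}e^{-(v_\star(s-\Upsilon_1))^2/4}\,ds\lesssim\Upsilon_*^{-1}e^{-(v_\star(t-\Upsilon_1))^2/4}$. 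This supplies the smallness you wanted from $1/t$; it is exactly how Lemma~\ref{bot1} and Corollary~\ref{44} are obtained, with no Gronwall and cut-offs satisfying $|\nabla\psi_j|\le1$.
\item Powers of the bootstrap bound above $2$ carry an extra factor $e^{-c(v_\star\Upsilon_*)^2}$, so the remainders are absorbable.
\item The stated estimate follows, since $t-\Upsilon_1>t-\Upsilon_0$.
\end{itemize}

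A backward Gronwall argument using the super-exponential smallness of the sources could also be made to work. It would have to be run on the coupled $L^2$/$H^1$ system, because the $g$-term in $\frac{d}{dt}\|w_n\|_2^2$ produces $\|w_n\|_{H^1}^{2\sigma+2}$ and Cazenave--Haraux only handles the logarithm. It would also need a small constant in the bootstrap hypothesis. Your proposal sets up neither.

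\textbf{The coercivity step needs no operator $L_j$ and no use of $-\int\log|Q_j|^2|w|^2$.} Lemma~\ref{pop} with $|Q_j|\le C$ directly gives the lower bound $\frac12\int|\nabla w|^2\psi_j-C\int\bigl(|w|^2+|w|^2\log(1+|w|)+|w|^{2\sigma+2}\bigr)\psi_j$, plus momentum and boundary terms. The separately proved $L^2$ bound then makes everything except the kinetic term negligible. The Gaussian lower bound of Lemma~\ref{011} enters only through the interaction estimates.
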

To prove Theorem \ref{09}, we also need to ensure that the initial data $u_n(\Upsilon_0)$ converge to some initial datum $u_0$, and that $u_n(t)$ converges weakly to $u(t)$ in $H^1(\mathbb{R}^d)$.
\begin{lemma}\cite[Theorem 1.3]{ben2026}\label{08}
   Let $T>0$ and $u_{n,0}, u_0 \in H^1(\mathbb{R}^d)$, and let $u_n(t)$ and $u(t)$ be the solutions corresponding to the initial data $u_{n,0}$ and $u_0$, respectively for \eqref{NLS}. Suppose that
$$
u_{n,0} \to u_0 \quad \text{in } L^2_{\rm loc}(\mathbb{R}^d).
$$ 
and that there exist $K_T>0$ such that $$\sup_{\substack{n \geq 0 \\ t \in [0,T]}}\norm{u_n(t)}_{H^1}\leq K_T.$$
Then, for all $t \in [0,T]$, we have 
$$
u_n(t) \rightharpoonup u(t) \quad \text{in } H^1(\mathbb{R}^d)\quad \text{ and }\quad u_n(t) \to u(t) \quad \text{in } L^2_{\rm loc}(\mathbb{R}^d).
$$
\end{lemma}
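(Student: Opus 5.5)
The plan is a compactness argument combined with uniqueness of the limit, rather than a direct stability estimate on $u_n-u$. A direct estimate seems problematic. Only $L^2_{\rm loc}$ convergence of the data is assumed. A weighted estimate on $\int \psi |u_n-u|^2$, with a decaying weight $\psi$, produces a term of the form $\int |\nabla\psi|\,|\nabla(u_n-u)|\,|u_n-u|$. With only a uniform $H^1$ bound this term is controlled by $C\sqrt{y}$ rather than $Cy$, so Gronwall does not close to zero. I would therefore argue as follows.

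\textbf{Step 1 (uniform bounds and equicontinuity).} The bound $\sup_{n,t}\norm{u_n(t)}_{H^1}\le K_T$ comes from the hypothesis. From the equation, $\partial_t u_n = i\Delta u_n + i u_n\log|u_n|^2 + iu_n g(|u_n|^2)$. Using $|z\log|z|^2|\lesssim |z|^{1-\epsilon}+|z|^{1+\epsilon}$ and the growth of $g$ from \ref{A1}--\ref{A2} together with Sobolev embeddings, $\partial_t u_n$ is bounded in $L^\infty([0,T], H^{-1}(B_R)+L^{p'}(B_R))$ for every ball $B_R$, uniformly in $n$.

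\textbf{Step 2 (extraction of a limit).} By Aubin--Lions, or Arzel\`a--Ascoli with the compact embedding $H^1(B_R)\hookrightarrow L^2(B_R)$, and a diagonal argument in $R$, I would extract a subsequence such that $u_n\to v$ in $\mathcal{C}([0,T],L^2_{\rm loc})$. Moreover $u_n(t)\rightharpoonup v(t)$ in $H^1$ for every $t$, since the weak limit is identified by the $L^2_{\rm loc}$ limit and the sequence is bounded. In particular $v(0)=u_0$ and $v\in L^\infty([0,T],H^1)$.

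\textbf{Step 3 (passage to the limit in the equation).} Along a further subsequence, $u_n\to v$ a.e. The maps $z\mapsto z\log|z|^2$ and $z\mapsto zg(|z|^2)$ are continuous. They are dominated locally by $|z|^{1-\epsilon}+|z|^{1+\epsilon}$ and $|z|+|z|^{2\sigma+1}$ respectively, and these are uniformly integrable on compact sets thanks to the $H^1$ bound with $\sigma$ subcritical. Vitali's theorem then gives convergence of the nonlinear terms in $L^1_{\rm loc}$. Hence $v$ is a distributional, and then $H^{-1}$-valued, solution of \eqref{NLS} with $v(0)=u_0$.

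\textbf{Step 4 (uniqueness), the main obstacle.} I need uniqueness of solutions of \eqref{NLS} in $L^\infty([0,T],H^1)$. This identifies $v=u$, so the whole sequence converges without passing to subsequences, which gives the claim for all $t\in[0,T]$.

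For the logarithmic part I would use the Cazenave--Haraux inequality
$$\Big|\Im\big((z_1\log|z_1|^2-z_2\log|z_2|^2)(\overline{z_1}-\overline{z_2})\big)\Big|\le 2|z_1-z_2|^2.$$
For the $g$ part I would use $|z_1g(|z_1|^2)-z_2g(|z_2|^2)|\lesssim (1+|z_1|^{2\sigma}+|z_2|^{2\sigma})|z_1-z_2|$.

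The difficulty is that $v$ is a priori only in $H^1$, not in $W_1$, and that the power term is not globally Lipschitz in $L^2$. I would try an $L^2$ energy estimate on $w=v-u$ as in Hayashi's $H^1$ uniqueness argument. The log term is handled by the inequality above. The power term is handled through the integral formulation with Strichartz estimates in admissible pairs, using $\sigma<2/(d-2)$. Justifying this energy identity for merely $H^1$ weak solutions requires a regularization, which I expect to be the technical core of the proof.
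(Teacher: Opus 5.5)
The paper does not prove this lemma; it quotes it from \cite{ben2026}, so your proposal has to be judged on its own merits. Steps 1--3 are the standard compactness argument and are fine. There is one inaccuracy. For $v\in H^1\setminus W_1$, the term $v\log|v|^2$ need not lie in $H^{-1}$ globally, since near $0$ it only behaves like $|v|^{1-\epsilon}$. This is easily repaired: pair the equation for $w=v-u$ with $\chi(x/R)\,\overline{w}$ and let $R\to\infty$. The Cazenave--Haraux bound (Lemma~\ref{002}) is pointwise, and the commutator term is $O(T/R)$. So the ``regularization'' you single out as the technical core is routine, not the real obstacle.

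\textbf{The genuine gap is Step 4, which carries the whole content of the lemma.} Steps 1--3 only show that every $L^2_{\rm loc}$ limit point of $(u_n)$ is \emph{some} weak solution in $L^\infty([0,T],H^1)$ with datum $u_0$. Identifying it with $u$ needs uniqueness (or weak--strong stability) in that class for the mixed nonlinearity. Your sketched mechanism does not supply it, because the two tools you want to combine cannot be split between the two nonlinear terms.

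\emph{Energy method.} After Lemma~\ref{002} disposes of the logarithm, the power part leaves $\int_0^t\!\int(1+|u|^{2\sigma}+|v|^{2\sigma})|w|^2$. With only $L^\infty H^1$ bounds and $d\ge 2$, this is not bounded by $C\int_0^t\|w\|_{L^2}^2$. Gagliardo--Nirenberg only gives $\|w\|_{L^2}^{2(1-\theta)}$ with $\theta=\frac{d\sigma}{2\sigma+2}>0$, which is sublinear in $\|w\|_{L^2}^2$. So a Gronwall argument does not force $w\equiv0$; it closes only in $d=1$, where $H^1\subset L^\infty$.

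\emph{Strichartz (Kato) method.} Here you must bound the \emph{entire} right-hand side of the equation for $w$ in a dual Strichartz norm, including $v\log|v|^2-u\log|u|^2$. But $z\mapsto z\log|z|^2$ is not Lipschitz at $0$. For small amplitudes one only has the pointwise bound $|w|\,(1+|\log\max(|u|,|v|)|)$, whose weight is unbounded where the solutions are small, i.e.\ at spatial infinity. For instance, $\|w\log|w|\|_{L^2}$ is not controlled by any function of $\|w\|_{L^2}$: take $w$ of amplitude $\eta R^{-d/2}$ on a ball of radius $R\to\infty$.

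Hence ``energy for the logarithm, Strichartz for the power'' does not close, since each method needs control of the full difference equation. What is missing is an actual uniqueness theorem in $L^\infty([0,T],H^1)$ for $u\log|u|^2+ug(|u|^2)$, which is exactly what the cited result provides. You also cannot fall back on the uniqueness class $\mathcal C_b(W_1)\cap\mathcal C^1(W_1')$ of Theorem~\ref{100}. Under the lemma's hypotheses, which give only $H^1$ bounds, you do not know that $v\in W_1$, let alone that it is continuous in $W_1$.
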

\begin{proposition}[Compactness]\label{07}
    There exist $u_0\in W_1(\mathbb{R}^d)$ such that (up to a subsequence) $$u_n(\Upsilon_0)\longrightarrow u_0 \quad \text{ in } L^2(\mathbb{R}^d).$$
\end{proposition}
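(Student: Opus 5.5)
The plan is to show that the family $(u_n(\Upsilon_0))_n$ is precompact in $L^2(\mathbb{R}^d)$, then pass to the limit. By Proposition \ref{04} and the uniform $H^1$ bound on $Q(\Upsilon_0)$ (each $\Phi_{\omega_j}$ lies in $H^1$ with Gaussian decay, by Lemma \ref{011} and Corollary \ref{0111}), the sequence $u_n(\Upsilon_0)$ is bounded in $H^1$. Rellich's theorem on balls therefore gives precompactness in $L^2_{\rm loc}$. What remains is uniform smallness of the $L^2$ mass at spatial infinity:
$$\forall \varepsilon>0,\ \exists R_\varepsilon>0,\ \forall n,\quad \int_{|x|>R_\varepsilon}|u_n(\Upsilon_0,x)|^2\,\mathrm{d}x\le\varepsilon.$$

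To get this tail bound, I first fix $\varepsilon$ and choose $t_\varepsilon\ge\Upsilon_0$ with $\exp(-(v_\star(t_\varepsilon-\Upsilon_0))^2/8)\le\sqrt{\varepsilon}$. For $n$ with $T_n\ge t_\varepsilon$, Proposition \ref{04} gives $\|u_n(t_\varepsilon)-Q(t_\varepsilon)\|_{L^2}^2\le\varepsilon$. The Gaussian decay of Lemma \ref{011} then gives $R_\varepsilon$ such that $\int_{|x|>R_\varepsilon}|Q(t_\varepsilon)|^2\le\varepsilon$. Hence $\int_{|x|>R_\varepsilon}|u_n(t_\varepsilon)|^2\lesssim\varepsilon$. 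The finitely many $n$ with $T_n<t_\varepsilon$ are handled individually, since each $u_n(\Upsilon_0)\in L^2$.

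Next I propagate this bound backwards from $t_\varepsilon$ to $\Upsilon_0$ with a localized mass. Let $\chi:\mathbb{R}^d\to[0,1]$ be smooth, vanishing on $|x|\le 1/2$, equal to $1$ for $|x|\ge1$, with bounded gradient, and set $\chi_R(x)=\chi(x/R)$. The logarithmic and $g$ terms are of the form real function times $u$, so they drop out of the localized mass identity:
$$\frac{\mathrm{d}}{\mathrm{d}t}\int\chi_R|u_n|^2\,\mathrm{d}x=2\,\Im\int\nabla\chi_R\cdot\nabla u_n\,\overline{u_n}\,\mathrm{d}x.$$
I would justify this for $W_1$ solutions by the standard regularization, or via the $\mathcal{C}^1(\mathbb{R},W_1')$ regularity from Theorem \ref{100}. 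The right-hand side is bounded by $\frac{C}{R}\sup_t\|u_n(t)\|_{H^1}^2\lesssim 1/R$, using the uniform bound of Proposition \ref{04} on $[\Upsilon_0,T_n]$. Integrating over $[\Upsilon_0,t_\varepsilon]$ gives
$$\int\chi_R|u_n(\Upsilon_0)|^2\le\int\chi_R|u_n(t_\varepsilon)|^2+\frac{C(t_\varepsilon-\Upsilon_0)}{R}.$$
Taking $R\ge 2R_\varepsilon$ large enough makes both terms at most $O(\varepsilon)$.

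Combining local compactness with the uniform tail bound, a diagonal argument yields a subsequence with $u_n(\Upsilon_0)\to u_0$ in $L^2(\mathbb{R}^d)$. By weak compactness, $u_0\in H^1$, and along a further subsequence $u_n(\Upsilon_0)\to u_0$ a.e.

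It remains to check $u_0\in W_1$, i.e. $|u_0|^2\log|u_0|\in L^1$; I expect this to be the main obstacle. The positive part of $|u|^2\log|u|^2$ is controlled by $|u|^{2+2\sigma'}$ with $\sigma'<2/d$ small, so it is handled by the $H^1$ bound and Gagliardo–Nirenberg. The negative part, coming from $|u|^2\log(1/|u|^2)$ on $\{|u|<1\}$, is not controlled by $H^1$ alone. I would bound it via Fatou's lemma from a uniform estimate on $\int|u_n(\Upsilon_0)|^2\log^-|u_n(\Upsilon_0)|^2$. That estimate would come from energy conservation of Theorem \ref{100}: $E(u_n(\Upsilon_0))=E(Q(T_n))$ is uniformly bounded because the solitons separate. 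This uniformly bounds $\int|u_n|^2\log|u_n|^2$, and the other terms of $E$ are already controlled, so the negative part is uniformly bounded.
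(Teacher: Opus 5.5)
Your proposal is correct and follows essentially the same route as the paper. You get tail smallness at a large time $t_\varepsilon$ from the uniform estimate of Proposition~\ref{04} and the Gaussian decay of $Q$, and transfer it back to $\Upsilon_0$ through a localized mass whose time derivative is $O(1/R)$ uniformly in $n$. You then combine this with Rellich on balls, and use energy conservation $E(u_n(\Upsilon_0))=E(Q(T_n))$ to control the negative part of the logarithmic term.

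The differences are minor. You conclude $u_0\in W_1(\mathbb{R}^d)$ by Fatou's lemma along an a.e.\ convergent subsequence, whereas the paper extracts a weak limit in $W_1(\mathbb{R}^d)$ via Proposition~\ref{cv}. You also explicitly dispose of the finitely many $n$ with $T_n<t_\varepsilon$, which the paper passes over.
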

Now the proof of Theorem~\ref{09} is an immediate consequence of 
these two propositions and Lemma \ref{08}, like in \cite{25,22,12}.

\begin{proof}[Proof of Theorem~\ref{09}]
By Proposition~\ref{07}, there exists $u_0 \in W_1(\mathbb{R}^d)$ such that, up to a subsequence,  
$$
u_n(\Upsilon_0) \longrightarrow u_0 \quad \text{in } L^2(\mathbb{R}^d).
$$
Since the Cauchy problem is well--posed in $W_1(\mathbb{R}^d)$, let $u \in \mathcal{C}_b([\Upsilon_0,\infty),W_1(\mathbb{R}^d))$ be the corresponding solution with initial data $u_0$ at $t=\Upsilon_0$, given by Theorem~\ref{100}.  

On the other hand, Proposition~\ref{04} ensures that $(u_n(t))$ is uniformly bounded in $H^1(\mathbb{R}^d)$, uniformly in $t$ and $n$.  
Therefore, by Lemma~\ref{08}, the sequence $(u_n(t))$ is converge weakly to $u(t)$ in $H^1(\mathbb{R}^d)$ for every $t \in [\Upsilon_0,\infty)$.
 Applying again Proposition~\ref{04}, we obtain
$$
\|u(t) - Q(t)\|_{H^1}
\leq
\liminf_{n\to\infty} \|u_n(t) - Q(t)\|_{H^1}
\leq
\exp\!\left( -\frac{(v_\star (t-\Upsilon_0))^2}{8} \right),
$$
for every $t\in [\Upsilon_0,\infty)$, which concludes the proof.
\end{proof}
\section{\texorpdfstring{Proof of the uniform $H^1$--estimates}{Proof of the uniform H1--estimates}}\label{4}
In \cite{15}, the uniform $L^2$--estimates for multi-solitons are obtained by direct computation. 
In \cite{25,22,12}, the uniform $H^1$--estimates for multi-solitons are established using a bootstrap argument. 
Here, we combine these two methods to prove the uniform estimates. 

The proof of Proposition \ref{04} is a consequence of the following bootstrap result.
\begin{lemma}[Bootstrap]\label{05}
There exist $\Upsilon_0,\Upsilon_1 > 0$ and $n_0 \in \mathbb{N}^*$ such that for all $n\geq n_0$, $\Upsilon_1<\Upsilon_0<T_n$ and, for all  $\Upsilon^\dagger \in [\Upsilon_0, T_n]$, the following holds: if for all $t \in [\Upsilon^\dagger, T_n]$ we have \begin{equation}\label{12}
        \norm{u_n(t) - Q(t)}_{H^1}
        \le e^{-\frac{(v_\star (t-\Upsilon_1))^2}{8}},
    \end{equation}
    then, for all $t\in [\Upsilon^\dagger, T_n]$ we have \begin{equation}\label{13}
        \|u_n(t) - Q(t)\|_{H^1}
        \le \frac12 e^{-\frac{(v_\star (t-\Upsilon_1))^2}{8}}.
    \end{equation}
\end{lemma}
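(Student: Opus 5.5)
Write $w(t):=u_n(t)-Q(t)$ and argue on $[\Upsilon^\dagger,T_n]$ under the bootstrap hypothesis \eqref{12}. The plan combines an $L^2$ estimate obtained by direct computation, as in \cite{15}, with an $H^1$ estimate from localized conservation laws and coercivity, as in \cite{22,12}. Throughout, interaction terms between different solitons are controlled by the Gaussian bounds of Lemma~\ref{011} and Corollary~\ref{0111}. Since $|x_j+v_jt-x_k-v_kt|\ge v_\star t-C$, these give
\[
\int |Q_j||Q_k| + |\nabla Q_j||\nabla Q_k| \lesssim e^{-\frac{(v_\star t)^2}{4}(1-\epsilon)},
\]
which is much smaller than $e^{-(v_\star (t-\Upsilon_1))^2/8}$ once $\Upsilon_0$ is large relative to $\Upsilon_1$. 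This extra room in the exponent is what lets us pass from the constant $1$ in \eqref{12} to the constant $\frac12$ in \eqref{13}.

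\textbf{Step 1 ($L^2$ bound).} Write the equation for $w$:
\[
i\partial_t w+\Delta w + \big(F(u_n)-F(Q)\big)+\Big(F(Q)-\sum_j F(Q_j)\Big)=0, \qquad F(u)=u\log|u|^2+ug(|u|^2).
\]
Then compute $\frac{d}{dt}\|w\|_2^2=2\Im\int (F(u_n)-F(Q))\overline w+2\Im\int (F(Q)-\sum_j F(Q_j))\overline w$. For the logarithmic part, use the Cazenave–Haraux inequality $|\Im((z_1\log|z_1|^2-z_2\log|z_2|^2)(\overline{z_1-z_2}))|\le 2|z_1-z_2|^2$. For the $g$ part, \ref{A1}--\ref{A2} and the $H^1$ bound from \eqref{12} give a local Lipschitz bound. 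Together these yield $\frac{d}{dt}\|w\|_2^2\ge -C\|w\|_{H^1}^2-C\|w\|_2\,\mathcal{R}(t)$. The source term $\mathcal R(t)$ is supported where two solitons overlap, and there the logarithm contributes at most polynomial factors; Lemma~\ref{gausse} makes $\mathcal R(t)$ Gaussian small.

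Integrating backwards from $T_n$, where $w(T_n)=0$, and inserting \eqref{12} gives
\[
\|w(t)\|_2^2\lesssim \int_t^{T_n} e^{-\frac{(v_\star(s-\Upsilon_1))^2}{4}}\,ds \lesssim \frac{1}{v_\star^2(t-\Upsilon_1)}\,e^{-\frac{(v_\star(t-\Upsilon_1))^2}{4}}.
\]
The prefactor on the right is small when $\Upsilon_0-\Upsilon_1$ is large.

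\textbf{Step 2 ($H^1$ bound, the main obstacle).} Introduce cutoffs $\psi_j(t,x)$ that partition space along the midlines between soliton trajectories. Define the localized action
\[
\mathcal S(t)=E(u_n)+\sum_j\Big(\big(\omega_j+\tfrac{|v_j|^2}{4}\big)\int\psi_j|u_n|^2-\tfrac{v_j}{1}\cdot\Im\int\psi_j\overline{u_n}\nabla u_n\Big).
\]
Energy is conserved by Theorem~\ref{100}, and the localized mass and momentum vary only through terms supported on the cutoff transition regions, which are far from every soliton. Using \eqref{12} there, $|\mathcal S(T_n)-\mathcal S(t)|$ is bounded by $C\,e^{-(v_\star(t-\Upsilon_1))^2/4}$ times small factors. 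Expanding $\mathcal S(Q+w)$ with \eqref{147} cancels the linear terms up to interaction errors. The quadratic term is $\sum_j\langle L_jw,w\rangle$, where
\[
L_j=-\Delta+\omega_j+\tfrac{|v_j|^2}{4}+iv_j\cdot\nabla-\text{(linearization of }F\text{ at }Q_j).
\]
The main obstacle is the coercivity $\langle L w,w\rangle\ge c\|w\|_{H^1}^2-C\|w\|_2^2$. The logarithmic term $-\log|Q_j|^2\sim |x|^2$ is positive and large far away, so it helps rather than hurts. However, $|w|^2\log|w|^2$ is not $C^2$ at $0$, so the Taylor expansion must be replaced by the convexity-type inequality $\int |u|^2\log|u|^2-|Q|^2\log|Q|^2-\ldots\le C\|w\|_2^2+o(\|w\|_{H^1}^2)$. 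I would first try to prove this pointwise, splitting into the regions $|w|\le |Q|$ and $|w|>|Q|$. The cubic remainder from $g$ is $o(\|w\|_{H^1}^2)$ by \ref{A2}.

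\textbf{Conclusion.} Combining the two steps gives
\[
\|w(t)\|_{H^1}^2\le C\|w(t)\|_2^2+C\,e^{-\frac{(v_\star t)^2}{4}(1-\epsilon)}+o(\|w\|_{H^1}^2).
\]
By Step 1, the right-hand side is at most $\frac14e^{-(v_\star(t-\Upsilon_1))^2/4}$ once $\Upsilon_1$ is fixed and then $\Upsilon_0$ is chosen large (with $n\ge n_0$ ensuring $T_n>\Upsilon_0$). This is exactly \eqref{13}.
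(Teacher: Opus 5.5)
Your overall plan matches the paper's. Step~1 is the same direct $L^2$ computation (Cazenave--Haraux for the logarithm, a Lipschitz bound for $g$, backward integration from $w(T_n)=0$ gaining $\frac{1}{v_\star^2(t-\Upsilon_1)}\lesssim\Upsilon_*^{-1}$). Step~2 is the same $H^1$ control up to $L^2$ via a localized action, its slow variation, and the weak expansion of $|z|^2\log|z|^2$. The inequality you propose to prove pointwise is supplied by Lemma~\ref{pop} plus Sobolev, after which no spectral coercivity is needed, only Cauchy--Schwarz and Step~1.

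\textbf{The gap is in the step that closes the bootstrap.} You bound interactions by $e^{-(1-\epsilon)(v_\star t)^2/4}$ and claim this ``extra room in the exponent'' yields the factor $\frac12$. There is no such room. You compare with $\|w\|_{H^1}\sim e^{-(v_\star(t-\Upsilon_1))^2/8}$, but the energy method controls $\|w\|_{H^1}^2\sim e^{-(v_\star(t-\Upsilon_1))^2/4}$. Every error term at that level has exactly the same quadratic coefficient: $\int|Q_j||Q_k|$, tails of $Q_j$ on the cut-off regions, and cross terms $\|w\|_{H^1}\,\|Q_k\|_{L^2(\psi_j\,\mathrm{d}x)}$. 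With $\epsilon>0$ fixed, $e^{-(1-\epsilon)(v_\star t)^2/4}/e^{-(v_\star(t-\Upsilon_1))^2/4}=\exp\bigl(\tfrac{v_\star^2}{4}(\epsilon t^2-2\Upsilon_1 t+\Upsilon_1^2)\bigr)\to\infty$. So your final inequality fails for large $t$, and $t$ runs up to $T_n\to\infty$. Step~1 has the same problem: to integrate $\|w\|_{L^2}\mathcal R(s)$ into your displayed bound you need $\mathcal R(s)\lesssim e^{-(v_\star(s-\Upsilon_1))^2/8}$ with the sharp coefficient. ``Gaussian small'' with a loss in the coefficient does not give that.

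\textbf{What is missing is the role of the time shift $\Upsilon_1$.} The only losses in the exponent are linear in $t$ (plus polynomial prefactors). They come from the initial separations $|x_j-x_k|$ and from the shift $r_0$ in the upper bound of Lemma~\ref{011}. The paper fixes $\Upsilon_1$ from these data by \eqref{time}, so that $|x_j^*(t)-x_k^*(t)|-2r_0>v_\star(t-\Upsilon_1)+6$. The interaction and tail bounds of Lemmas~\ref{est1}, \ref{est2}, \ref{est4} then carry an extra factor $e^{-c v_\star(t-\Upsilon_1)}$. This absorbs the polynomial weights and produces $\Upsilon_*^{-k}e^{-(v_\star(t-\Upsilon_1))^2/4}$. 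That gain, together with the $\Upsilon_*^{-1}$ from time integration (in Step~1 and in the slow variation), gives the factor $\frac12$ once $\Upsilon_*$ is large. A genuine gain in the quadratic coefficient occurs only for superquadratic terms such as $\|w\|_{L^{2+\delta}}^{2+\delta}$ and $\|w\|_{L^{2\sigma+2}}^{2\sigma+2}$.

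Two smaller points:
\begin{itemize}
\item With $E$ normalized as in \eqref{0101}, your mass and momentum terms need the factor $\frac12$ built into $M_j$ and $\mathcal J_j$. Otherwise $Q_j$ is not a critical point and the linear terms do not cancel.
\item Your ``midline'' cut-offs are unspecified. In $d\ge 2$ the bisecting hyperplanes of $x_j^*(t)$ and $x_k^*(t)$ rotate in time, so $|\partial_t\psi_j|\lesssim|\nabla\psi_j|$ is not automatic. The paper's balls of radius $\frac{v_\star(t-\Upsilon_1)}{2}+2$, plus the remainder $\psi_0$, give it immediately (Lemma~\ref{der}).
\end{itemize}
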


\begin{proof}[Proof of Proposition \ref{04}]
    The idea is to consider the set
$$
\mathcal{T}_n := \Bigl\{ t \in [\Upsilon_0, T_n] \mid \|u_n(s) - Q(s)\|_{H^1} \le e^{-\frac{(v_\star (s-\Upsilon_1))^2}{8}} \quad \text{for all } s \in [t, T_n] \Bigr\}.
$$
Using the continuity of $u_n(t)$ in $H^1$ and the fact $u_n(T_n)=Q(T_n)$, one can show that the set $\mathcal{T}_n$ is both open and closed in $[\Upsilon_0,T_n]$ (equipped with the topology induced by $\mathbb{R}$) and not empty.  
By the connectedness of the interval $[\Upsilon_0,T_n]$, it follows that
$$
\mathcal{T}_n = [\Upsilon_0, T_n],
$$
which gives the desired uniform estimate.
\end{proof}  
The proof of Lemma~\ref{05} is carried out in two steps. 
First, assuming \eqref{12}, we show that the $L^2$-norm of $(u_n - Q)$ can be controlled. 
To obtain the full control of the $H^1$-norm of $(u_n - Q)$, as in \eqref{13}, we use the technique introduced in \cite{15}. 
The idea is to employ the weak linearization of an action-like functional, which we explain later. 
Indeed, in \cite{25,22,12}, the energy functional is of class $\mathcal{C}^2$, whereas in \cite{15} as well as for \eqref{NLS} this is not the case.
This weak linearization is coercive, i.e., it controls the $H^1$--norm up to a $L^2$--norm, 
all of which can be handled using the previously obtained $L^2$--estimate.

Let $\Upsilon_0,\Upsilon_1> 0$ to be fixed later and fix $n \in \mathbb{N}$ such that $T_n > \Upsilon_0>\Upsilon_1$. Set $w_n := u_n - Q$. Let $\Upsilon^\dagger \in [\Upsilon_0, T_n]$ and assume that for all $t \in [\Upsilon^\dagger, T_n]$, we have

\begin{equation}\label{as}
    \|w_n(t)\|_{H^1}
        \le e^{-\frac{(v_\star(t-\Upsilon_1))^2}{8}}.
\end{equation}
In the sequel, we set $\Upsilon_* := \Upsilon_0 - \Upsilon_1$. We choose $\Upsilon_0$ sufficiently large so that $\Upsilon_*$ is also sufficiently large. The parameter $\Upsilon_1$ will be fixed later.
\subsection{\texorpdfstring{Control of the $L^2$--norm}{Control of the L2-norm}}
In the case of the power-type NLS \cite{12}, the requires the velocity $v_\star$ to be sufficiently large. 
However, for \eqref{NLSlog}, this assumption is not needed. 
The difference comes from the decay properties of the solitary waves. In the former case, the decay is exponential, 
whereas in the logarithmic case it is faster than exponential, 
namely Gaussian (see \cite{15}). This compensates for the lack of a large-velocity assumption. This Gaussian decay is also found in the ground states of \eqref{NLS} (see Lemma \ref{011}) giving hope for a similar construction to work. 

To obtain the $L^2$--control in \cite{15}, the following  inequality is used:
\begin{lemma}\cite[Lemma 1.1.1]{8}\label{002}
    Let $z_1, z_2 \in \mathbb{C}$. Then the following estimate holds:
$$
\Big|\Im\Big( \bigl(z_1 \log|z_1| - z_2 \log|z_2| \big) \overline{(z_1 - z_2)} \Big)\Big|
   \le |z_1 - z_2|^{2}.
$$
\end{lemma}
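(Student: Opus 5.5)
The plan is a direct algebraic computation: expand the imaginary part, observe that the diagonal terms drop out, and bound what remains using the elementary inequality $\log x\le x-1$. We use the convention $0\log 0=0$.

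\emph{Degenerate case.} If $z_1=0$ or $z_2=0$, say $z_2=0$, the quantity becomes $\Im\big(z_1\overline{z_1}\log|z_1|\big)=0$, since $|z_1|^2\log|z_1|$ is real. So the inequality is trivial, and we may assume $z_1,z_2\neq 0$.

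\emph{Expansion.} Expanding the product, the terms $z_1\overline{z_1}\log|z_1|$ and $z_2\overline{z_2}\log|z_2|$ are real and disappear under $\Im$. This leaves
\begin{equation*}
\Im\Big(\big(z_1\log|z_1|-z_2\log|z_2|\big)\overline{(z_1-z_2)}\Big)
=-\log|z_1|\,\Im(z_1\overline{z_2})-\log|z_2|\,\Im(z_2\overline{z_1}).
\end{equation*}
Since $\Im(z_2\overline{z_1})=-\Im(z_1\overline{z_2})$, this equals $\big(\log|z_2|-\log|z_1|\big)\Im(z_1\overline{z_2})$.

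\emph{Bounding the two factors.} The expression is symmetric in $(z_1,z_2)$ up to a sign, so we may assume $0<|z_2|\le|z_1|$.
\begin{itemize}
\item For the second factor, write $z_1\overline{z_2}=(z_1-z_2)\overline{z_2}+|z_2|^2$. The last term is real, so $|\Im(z_1\overline{z_2})|\le |z_2|\,|z_1-z_2|$.
\item For the first factor, apply $\log x\le x-1$ with $x=|z_1|/|z_2|\ge 1$. This gives $0\le \log|z_1|-\log|z_2|\le \frac{|z_1|-|z_2|}{|z_2|}$.
\end{itemize}

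\emph{Conclusion.} Multiplying the two bounds, the factor $|z_2|$ cancels:
\begin{equation*}
\Big|\Im\Big(\big(z_1\log|z_1|-z_2\log|z_2|\big)\overline{(z_1-z_2)}\Big)\Big|\le \big(|z_1|-|z_2|\big)\,|z_1-z_2|\le |z_1-z_2|^2 .
\end{equation*}
The last step is the reverse triangle inequality $|z_1|-|z_2|\le |z_1-z_2|$.

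The only delicate point is to choose the ordering $|z_2|\le|z_1|$ so that the $|z_2|$ coming from the $\Im$ bound cancels the denominator in the logarithm bound. This cancellation is what removes the singularity of $\log$ at the origin.
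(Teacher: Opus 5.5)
Your proof is correct and is essentially the standard argument for this result; the paper gives no proof of its own and only cites Cazenave--Haraux. As in that argument, you reduce the imaginary part to $(\log|z_2|-\log|z_1|)\,\Im(z_1\overline{z_2})$, order the moduli, and let the factor $|z_2|$ in $|\Im(z_1\overline{z_2})|\le|z_2|\,|z_1-z_2|$ cancel the denominator produced by $\log x\le x-1$.
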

While this estimate permits the control of the logarithmic part, in our situation it is also necessary to estimate the power term. To this end, we will need the following inequality:
\begin{lemma}\label{012}
   For all $t\geq 0$, we have $$\abs{\int(u_n(t)g(\abs{u_n(t)}^{2})-Q(t)g(\abs{Q(t)}^{2}))\overline{w_n(t)}}\lesssim \norm{w_n(t)}_{L^2(\mathbb{R}^d)}^2+\norm{w_n(t)}_{H^1}^{2\sigma+2}.$$
\end{lemma}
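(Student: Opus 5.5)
Set $F(z):=z\,g(|z|^2)$ for $z\in\mathbb{C}$. The aim is a pointwise bound on the difference $F(Q+w)-F(Q)$, with $u_n=Q+w_n$; I will integrate it against $|w_n|$ afterwards.

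Regarded as a map of $\mathbb{R}^2$, $F$ is $\mathcal{C}^1$ away from the origin, with derivative
\[
DF(z)h=g(|z|^2)h+2g'(|z|^2)\Re(z\bar h)\,z .
\]
This gives $|DF(z)|\lesssim |g(|z|^2)|+|z|^2|g'(|z|^2)|$. On $s\le 1$, assumption \ref{A1} says $g$ is continuous, $g(0)=0$ and $sg'(s)\to0$, so $|g(s)|+s|g'(s)|$ is bounded on $[0,1]$. On $s\ge1$, assumption \ref{A2} gives $|g'(s)|\le Cs^{\sigma-1}$ and hence $|g(s)|\lesssim 1+s^{\sigma}$. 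Together,
\[
|DF(z)|\lesssim 1+|z|^{2\sigma}\quad\text{for all } z\neq0 .
\]
$F$ is continuous at $0$ with $F(0)=0$. The segment $[Q,Q+w]$ meets the origin at most at one point, and $F$ is Lipschitz along it, so the mean value inequality still applies. It yields
\[
|F(Q+w)-F(Q)|\lesssim \bigl(1+|Q|^{2\sigma}+|w|^{2\sigma}\bigr)|w| .
\]

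Next, I would multiply by $|w_n|$ and integrate. The solitons satisfy $\|Q(t)\|_{L^\infty}\le\sum_j\|\Phi_{\omega_j}\|_{L^\infty}\lesssim1$, using Lemma~\ref{011} and the fact that $\Phi_{\omega_j}$ is radially decreasing and continuous. This bound is uniform in $t$. The term $(1+|Q|^{2\sigma})|w_n|^2$ therefore contributes $\lesssim\|w_n\|_{L^2}^2$.

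The remaining term is $\int|w_n|^{2\sigma+2}$. Assumption \ref{A2} gives $\sigma<\frac{2}{d-2}$ when $d\ge3$, i.e.\ $2\sigma+2<\frac{2d}{d-2}$, and $\sigma$ is arbitrary when $d=1,2$. In all cases the Sobolev embedding $H^1\hookrightarrow L^{2\sigma+2}$ holds, giving $\|w_n\|_{L^{2\sigma+2}}^{2\sigma+2}\lesssim\|w_n\|_{H^1}^{2\sigma+2}$. This proves the claimed inequality, since $w_n(t)\in H^1$ for every $t$ by Theorem~\ref{100}.

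The main technical point is the uniform derivative bound near $s=0$. Assumption \ref{A1} is exactly what keeps $s g'(s)$ bounded there, even though $g'$ itself may blow up. The non-smoothness of $F$ at the origin is harmless, because the Lipschitz bound along segments only needs a.e.\ differentiability plus continuity. To sidestep the point $0$ entirely, one can apply the mean value inequality along a slightly perturbed path and pass to the limit.
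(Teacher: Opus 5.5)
Your proof is correct and follows the paper's route. The paper simply invokes the pointwise bound of Lemma~\ref{power}, namely $|z_1g(|z_1|^2)-z_2g(|z_2|^2)|\lesssim(1+|z_1|^{2\sigma}+|z_2|^{2\sigma})|z_1-z_2|$, together with the uniform $L^\infty$ bound on $Q$ and the embedding $H^1\hookrightarrow L^{2\sigma+2}$; you additionally derive that pointwise bound from \ref{A1}--\ref{A2} via $|DF(z)|\lesssim 1+|z|^{2\sigma}$.

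One small wording point: the mean value inequality works because differentiability fails at no more than one point of the segment, so you can split the interval there. Mere a.e.\ differentiability plus continuity would not suffice in general.
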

\begin{proof}
Applying Lemma~\ref{power} with $z_1 = u_n(t,x)$ and $z_2 = Q(t,x)$ and the fact $Q(t,x)$ is bounded in $L^\infty(\mathbb{R},L^\infty(\mathbb{R}^d))$ yields the desired conclusion.
\end{proof}
\begin{lemma}\label{bot1}
    For $t\in[\Upsilon^\dagger,T_n]$, we have $$
\norm{w_n(t)}_{L^2(\mathbb{R}^d)}
\lesssim
\Upsilon_*^{-\frac12} e^{-\frac{(v_\star(t-\Upsilon_1))^2}{8}}.
$$
\end{lemma}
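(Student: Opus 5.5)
We compute the time derivative of $\|w_n(t)\|_{L^2}^2$ directly, as in \cite{15}, and integrate backwards from $T_n$, where $w_n(T_n)=0$.

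\textbf{Equation for $w_n$.} Writing the equation satisfied by $w_n=u_n-Q$, we get
\begin{equation*}
i\partial_t w_n+\Delta w_n+\bigl(u_n\log|u_n|^2-Q\log|Q|^2\bigr)+\bigl(u_ng(|u_n|^2)-Qg(|Q|^2)\bigr)+\mathcal{R}=0,
\end{equation*}
where the interaction term is
\begin{equation*}
\mathcal{R}:=Q\log|Q|^2-\sum_j Q_j\log|Q_j|^2+Qg(|Q|^2)-\sum_jQ_jg(|Q_j|^2).
\end{equation*}

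\textbf{Differentiating the mass of $w_n$.} Multiplying by $\overline{w_n}$, integrating and taking imaginary parts, the Laplacian term disappears. This gives
\begin{equation*}
\frac12\frac{\mathrm{d}}{\mathrm{d}t}\|w_n\|_2^2=-\Im\int\bigl(u_n\log|u_n|^2-Q\log|Q|^2\bigr)\overline{w_n}-\Im\int\bigl(u_ng(|u_n|^2)-Qg(|Q|^2)\bigr)\overline{w_n}-\Im\int\mathcal{R}\,\overline{w_n}.
\end{equation*}
We bound each term separately.
\begin{itemize}
\item The logarithmic term is bounded by $2\|w_n\|_2^2$ by Lemma~\ref{002}.
\item The $g$-term is bounded by $C\bigl(\|w_n\|_2^2+\|w_n\|_{H^1}^{2\sigma+2}\bigr)$ by Lemma~\ref{012}.
\item The interaction term is bounded by $\|\mathcal{R}(t)\|_2\|w_n\|_2$.
\end{itemize}
Combining, we obtain
\begin{equation*}
\Bigl|\frac{\mathrm{d}}{\mathrm{d}t}\|w_n\|_2^2\Bigr|\le C\|w_n\|_2^2+C\|w_n\|_{H^1}^{2\sigma+2}+2\|\mathcal{R}\|_2\|w_n\|_2 .
\end{equation*}

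\textbf{Interaction estimate (main obstacle).} The key step is to show
\begin{equation*}
\|\mathcal{R}(t)\|_2\lesssim e^{-\frac{(v_\star t)^2}{4}+Ct},
\end{equation*}
or any bound much smaller than $e^{-(v_\star(t-\Upsilon_1))^2/4}$ for $t\ge\Upsilon_0$; this is where $\Upsilon_1$ is chosen, and it is the step I expect to be the main obstacle. The logarithm is not locally Lipschitz at $0$, so the difference $Q\log|Q|^2-\sum Q_j\log|Q_j|^2$ cannot be bounded by a simple product estimate.

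I would split $\mathbb{R}^d$ into the regions $\Omega_j$ where $|x-v_jt-x_j|\le|x-v_kt-x_k|$ for all $k\ne j$. On $\Omega_j$ every $Q_k$ with $k\neq j$ satisfies $|Q_k|\lesssim e^{-(v_\star t/2-C)^2/2}$ by Lemma~\ref{011}. To control the logarithmic part, I would use the elementary bound
\begin{equation*}
|z_1\log|z_1|^2-z_2\log|z_2|^2|\lesssim|z_1-z_2|\bigl(1+|\log|z_1-z_2||+\log^+|z_1|+\log^+|z_2|\bigr).
\end{equation*}
For the $g$-part, the local Lipschitz bound coming from \ref{A1}--\ref{A2} suffices. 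In each case the error is a small $Q_k$ times at most a logarithmic factor, which behaves like $t^2$. Integrating with Lemma~\ref{gausse} then gives
\begin{equation*}
\|\mathcal{R}(t)\|_2\lesssim t^{m}e^{-\frac{(v_\star t/2-r_0')^2}{2}}=t^m e^{-\frac{(v_\star t)^2}{8}+Ct}.
\end{equation*}
This is only of the same Gaussian order $e^{-(v_\star t)^2/8}$ as the bootstrap rate, shifted by $e^{Ct}$. Choosing $\Upsilon_1$ large enough relative to $C/v_\star^2$ absorbs the linear-in-$t$ term, so that for $t\ge\Upsilon_0$ it becomes
\begin{equation*}
\|\mathcal{R}(t)\|_2\lesssim e^{-\frac{(v_\star(t-\Upsilon_1))^2}{8}}\,e^{-c\,v_\star^2\Upsilon_1 t}.
\end{equation*}
The extra factor $e^{-c\,v_\star^2\Upsilon_1 t}$ is genuinely smaller than the bootstrap rate. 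Alternatively, the extra smallness can come from the factor $t-\Upsilon_1$ once $\Upsilon_*$ is large.

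\textbf{Integration from $T_n$.} Insert the bootstrap assumption \eqref{as} and integrate over $[t,T_n]$, using $w_n(T_n)=0$. Setting $\theta(s)=\frac{(v_\star(s-\Upsilon_1))^2}{8}$, the terms to integrate are:
\begin{itemize}
\item the nonlinear term $\|w_n\|_{H^1}^{2\sigma+2}\le e^{-(2\sigma+2)\theta}$;
\item the interaction term $\|\mathcal{R}\|_2\|w_n\|_2\le\|\mathcal{R}\|_2\,e^{-\theta}$;
\item the linear term $C\|w_n\|_2^2\le Ce^{-2\theta}$.
\end{itemize}
The key one-dimensional estimate is
\begin{equation*}
\int_t^\infty e^{-2\theta(s)}\,\mathrm{d}s\lesssim\frac{e^{-2\theta(t)}}{v_\star^2(t-\Upsilon_1)}\le\frac{e^{-2\theta(t)}}{v_\star^2\Upsilon_*}.
\end{equation*}
It holds because $\theta'(s)=v_\star^2(s-\Upsilon_1)/4\ge v_\star^2\Upsilon_*/4$ on $[\Upsilon_0,\infty)$. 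Applying it to the three terms gives
\begin{equation*}
\|w_n(t)\|_2^2\lesssim\Upsilon_*^{-1}e^{-2\theta(t)},
\end{equation*}
which is the claim after taking square roots.
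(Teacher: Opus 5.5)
Your argument is the paper's own. It uses the same expansion of $\frac12\frac{\mathrm d}{\mathrm dt}\|w_n\|_2^2$, Lemma~\ref{002} for the logarithmic difference, Lemma~\ref{012} for the $g$-difference, Cauchy--Schwarz on the interaction terms, and integration of the Gaussian tail. Integrating over $[t,T_n]$ with $w_n(T_n)=0$, as you do, is the precise form of the paper's ``integrating over $(t,\infty)$''.

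The only real difference is how you bound $\|\mathcal R(t)\|_2$.
\begin{itemize}
\item The paper cites Lemmas~\ref{est5} and~\ref{est6}. These localize with the partition of unity $(\psi_k)$, normalize and apply Lemma~\ref{liplog}, and control $|\log|Q_k||\lesssim 1+|x-x_k^*(t)|^2$ through the Gaussian \emph{lower} bound of Lemma~\ref{011}.
\item You use Voronoi cells and a log-Lipschitz inequality carrying the factor $|\log|z_1-z_2||$.
\end{itemize}
Your route works, but one step needs a correct justification. The factor $|\log|\sum_{k\neq j}Q_k||$ is not pointwise of size $t^2$, because the $Q_k$ may cancel. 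What you should say instead is that $s\mapsto s|\log s|$ is increasing on $[0,e^{-1}]$. Hence $|\sum_{k\neq j}Q_k|\,|\log|\sum_{k\neq j}Q_k||$, and likewise $|Q_k|\,|\log|Q_k|^2|$, is bounded by the same expression evaluated at the Gaussian upper envelope. Incidentally, this shows your route needs only the upper bound of Lemma~\ref{011}.

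Finally, your ``main obstacle'' is overstated. You never need $\|\mathcal R\|_2$ at the rate $e^{-2\theta}$, because it is multiplied by $\|w_n\|_2\le e^{-\theta}$; so $\|\mathcal R\|_2\lesssim e^{-\theta}$ suffices, which is also what you end up using. In your Voronoi computation this only requires $v_\star\Upsilon_1>\max_{j,k}|x_j-x_k|+2r_0$ (with $r_0$ as in \eqref{r}) by a fixed margin. That margin produces an extra factor $e^{-c(t-\Upsilon_1)}$, which absorbs the polynomial weight. The paper's fixed choice \eqref{time} already provides this, so no enlargement of $\Upsilon_1$ relative to $C/v_\star^2$ is needed.
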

\begin{proof}
First, $u_n(t)$ and $Q_j(t)$ are solutions of \eqref{NLS}, so that
$$
i\partial_t w_n + \Delta w_n + u_n \log|u_n|^2 - \sum_{j=1}^N Q_j \log|Q_j|^2 
+  u_n g(|u_n|^{2}) - \sum_{j=1}^N Q_j g(|Q_j|^{2})  = 0.
$$
Thus,
\begin{align*}
\frac{1}{2} \partial_t \|w_n(t)\|_{L^2}^2
&= \Im \langle i \partial_t w_n(t), w_n(t) \rangle_{H^{-1},H^1} \\
&= - \Im \bigg( u_n(t) \log|u_n(t)|^2 - \sum_{j=1}^N Q_j(t) \log|Q_j(t)|^2, w_n(t) \bigg) \\
&\quad - \Im \bigg( u_n(t) g(|u_n(t)|^{2}) - \sum_{j=1}^N Q_j(t) g(|Q_j(t)|^{2}), w_n(t) \bigg) \\
&= - \Im \bigg( u_n(t) \log|u_n(t)|^2 - Q(t) \log|Q(t)|^2, w_n(t) \bigg) \\
&\quad - \Im \bigg( Q(t) \log|Q(t)|^2 - \sum_{j=1}^N Q_j(t) \log|Q_j(t)|^2, w_n(t) \bigg) \\
&\quad - \Im \bigg( u_n(t) g(|u_n(t)|^{2}) - Q(t) g(|Q(t)|^{2}), w_n(t) \bigg) \\
&\quad - \Im \bigg( Q(t) g(|Q(t)|^{2}) - \sum_{j=1}^N Q_j(t) g(|Q_j(t)|^{2}), w_n(t) \bigg).
\end{align*}

We estimate these four terms for $t\in [\Upsilon^\dagger, T_n]$ using assumption \eqref{as}.
\\
The first term can be estimated using Lemma \ref{002}:
$$
\bigg| \Im \bigg( u_n(t) \log|u_n(t)|^2 - \sum_{j=1}^N Q_j(t) \log|Q_j(t)|^2, w_n(t) \bigg) \bigg| 
\le 2 \|w_n(t)\|_{L^2}^2 
\le 2 e^{-\frac{(v_\star (t-\Upsilon_1))^2}{4}}.
$$
The second term can be estimated using Lemma \ref{est6}:
\begin{align*}
\bigg| \Im \bigg( Q(t) \log|Q(t)|^2 &- \sum_{j=1}^N Q_j(t) \log|Q_j(t)|^2, w_n(t) \bigg) \bigg|
\\&\le \big\| Q(t)\log|Q(t)|^2 - \sum_{j=1}^N Q_j(t)\log|Q_j(t)|^2 \big\|_{L^2}\|w_n(t)\|_{L^2} \\
&\lesssim e^{-\frac{(v_\star (t-\Upsilon_1))^2}{4}}.
\end{align*}
The third term can be estimated using Lemma \ref{012}:
$$
\big| \Im( u_n(t) g(|u_n(t)|^{2}) - Q(t) g(|Q(t)|^{2}), w_n(t) ) \big| 
\lesssim \|w_n(t)\|_{L^2}^2 + \|w_n(t)\|_{H^1}^{2\sigma+2} 
\lesssim e^{-\frac{(v_\star (t-\Upsilon_1))^2}{4}}.
$$
The last term can be estimated using the Cauchy--Schwarz inequality and Lemma \ref{est5}:
\begin{align*}
    \big| \Im\big( Q(t) g(|Q(t)|^{2}) - \sum_{j=1}^N Q_j(t) g(|Q_j(t)|^{2}), w_n(t) \big) \big| 
&\le \big\| Q(t)g(|Q(t)|^{2}) - \sum_{j=1}^N Q_j(t)g(|Q_j(t)|^{2}) \big\|_{L^2}  \|w_n(t)\|_{L^2} 
\\&\lesssim e^{-\frac{(v_\star (t-\Upsilon_1))^2}{4}}.
\end{align*}
Gathering all these estimates, we obtain
$$
\frac{1}{2} \left| \partial_t \|w_n(t)\|_{L^2}^2 \right| \lesssim e^{-\frac{(v_\star (t-\Upsilon_1))^2}{4}}.
$$
Integrating this inequality over $(t,\infty)$ for $t\geq \Upsilon_0$ yields
\begin{equation*}
    \|w_n(t)\|_{L^2}^2 
\lesssim\int_t^\infty e^{-\frac{(v_\star (\tau-\Upsilon_1))^2}{4}}  \mathrm{d}\tau
\lesssim \Upsilon_*^{-1}  e^{-\frac{(v_\star (t-\Upsilon_1))^2}{4}}.\qedhere
\end{equation*}
\end{proof}
\subsection{\texorpdfstring{Control of the $H^1$--norm}{Control of the H1--norm}}

To obtain the control in $H^{1}$, we use an adaptation of techniques first developed by Martel, Merle, and Tsai
(see \cite{25}, and also \cite{12}). 
Indeed, it is known that each soliton $Q_j$ is a critical point,
of the action functional
$$
S_j(v) := E(v) + \left(\omega_j + \frac{|v_j|^2}{4}\right) M(v) + v_j \cdot \mathcal{J}(v).
$$
The proof of the bootstrap in $H^{1}(\mathbb{R}^d)$ relies on two key ingredients \emph{almost-coercivity} property 
and a \emph{slow-variation} property, similarly as in \cite{15}.  
In the power-type NLS case \cite{12}, one uses the linearization of $S_j$, and proves that the Hessian 
of these functionals is coercive on a finite-codimension of subspace of $H^{1}(\mathbb{R}^d)$ in $L^{2}(\mathbb{R}^d)$.

However, in the logarithmic case, the energy functional is not $\mathcal{C}^{2}$, so the classical second-order expansion cannot be used. As already mentioned, the key idea introduced in \cite{15} is to exploit 
a \emph{weak expansion} of the nonlinear term $z \mapsto \abs{z}^2 \log |z|$.  
This weaker form of linearization is nevertheless sufficient to recover the coercivity properties needed for the 
bootstrap, and the argument extends naturally to our setting.

For the slow-variation property, one needs to define quantities localized around each soliton.  
In the power-type NLS case, the localization must be essentially one-dimensional
because the decay of the solitons is only exponential, one must separate the solitons along a preferred direction 
to exploit their weak interaction see \cite{25,22,12}.
In contrast, in the logarithmic case the decay is \emph{Gaussian}, thus much faster. Thus, any localization along a single dimension in which all velocity components are well ordered 
would lead to a slower convergence rate, since the minimal relative speed in that direction would be smaller. 
The idea developed in \cite{15} is to "split" the space into regions centered around each soliton.

In our case, since we have proved that our soliton also decays according to a Gaussian profile 
(see Lemma~\ref{011}), we can use the same localization scheme as in the logarithmic framework 
treated in \cite{15}. 
This allows us to construct a partition of unity, so that each soliton can be treated separately. 
In this way, we can obtain local estimates around each soliton and then easily combine them 
to derive a global estimate for the full solution.

Let $\phi \in \mathcal{C}^{\infty}(\mathbb{R},\mathbb{R})$ be such that 
$$
\phi(s) = 
\begin{cases} 
1 & \text{for } s \le -1, \\
0 & \text{for } s \ge 1,
\end{cases}
\quad \text{with } \phi(s) \in [0,1] \text{ and } \phi'(s) \in [-1,0].
$$

We define:

\begin{itemize}
    \item The center of each soliton:
    $$
        x_j^*(t) := x_j + v_j t.
    $$
    
    \item The functions $\psi_j$ with the $j$-th member weighted around the $j$-th soliton:
    $$
        \psi_j(t,x) := \phi\Big(|x - x_j^*(t)| - \frac{v_\star (t-\Upsilon_1)}{2} - 2\Big).
    $$
    
    \item A last function $\psi_0$ completing the previous family so that $(\psi_j)_{1 \le j \le N}$ forms a partition of unity:
    $$
        \psi_0 := 1 - \sum_{j=1}^N \psi_j.
    $$
\end{itemize}

\begin{lemma}
    Assume that $$\Upsilon_1> \frac{6+\max_{j,k}\abs{x_j-x_k}}{v_\star}.$$ Then, for all $t\geq \Upsilon_1$, the family $(\psi_j(t))_{0\leq j\leq N}$ is a smooth partition of unity .
\end{lemma}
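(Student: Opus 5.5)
Smoothness in $x$ and $t$ will not be the issue. The plan is to show that each $\psi_j(t)$, $1\le j\le N$, is smooth with values in $[0,1]$, and that the supports of $\psi_1(t),\dots,\psi_N(t)$ are pairwise disjoint for $t\ge \Upsilon_1$. Then $\sum_{j=1}^N\psi_j(t,x)\in[0,1]$ pointwise, so $\psi_0=1-\sum_{j\ge1}\psi_j$ takes values in $[0,1]$ and the family sums to one by construction.

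\textbf{Smoothness.} Write $R(t):=\frac{v_\star(t-\Upsilon_1)}{2}+2\ge 2$. The map $x\mapsto|x-x_j^*(t)|$ is smooth away from $x=x_j^*(t)$. On the ball $|x-x_j^*(t)|<1$ we have $|x-x_j^*(t)|-R(t)<-1$, hence $\psi_j\equiv1$ on a neighbourhood of $x_j^*(t)$. The non-smooth point of the modulus is therefore harmless, and $\psi_j$ is $\mathcal{C}^\infty$ in $(t,x)$ as a composition of smooth maps. The bound $0\le\psi_j\le1$ comes directly from the assumption on $\phi$.

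\textbf{Disjointness of supports.} Since $\phi(s)=0$ for $s\ge1$, we have $\operatorname{supp}\psi_j(t)\subset \overline{B}\big(x_j^*(t),R(t)+1\big)$. It therefore suffices to show $|x_j^*(t)-x_k^*(t)|>2R(t)+2$ for $j\ne k$. By the triangle inequality,
$$|x_j^*(t)-x_k^*(t)|\ge |v_j-v_k|\,t-|x_j-x_k|\ge v_\star t-\max_{j,k}|x_j-x_k|.$$
The target is $2R(t)+2=v_\star(t-\Upsilon_1)+6$. Hence we need
$$v_\star t-\max|x_j-x_k|>v_\star t-v_\star\Upsilon_1+6,$$
that is $v_\star\Upsilon_1>6+\max|x_j-x_k|$. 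This is exactly the hypothesis, and the resulting inequality holds for every $t$, not only $t\ge\Upsilon_1$. The assumption $t\ge\Upsilon_1$ is used only to guarantee $R(t)\ge2$, which the smoothness step needed.

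\textbf{Main obstacle.} Aside from bookkeeping the constant $6$, the only point needing care is the smoothness of $\psi_j$ at the centre $x_j^*(t)$, handled above. One might also want $\psi_0$ to be nonnegative as part of the meaning of a partition of unity. This follows from disjointness, because at most one $\psi_j$ is nonzero at any given point.
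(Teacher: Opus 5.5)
Your proof is correct and follows the paper's argument: the support of $\psi_j(t)$ lies in the ball of radius $\frac{v_\star(t-\Upsilon_1)}{2}+3$ around $x_j^*(t)$, and these balls are disjoint by the triangle inequality combined with the hypothesis on $\Upsilon_1$. You additionally check smoothness of $\psi_j$ at the centre $x_j^*(t)$ and the nonnegativity of $\psi_0$, both of which the paper leaves implicit.
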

\begin{proof}
By definition of $\phi$ and $\psi_j$ for $j \geq 1$, we have, for $t \geq \Upsilon_1$,
 $$\operatorname{supp}\psi_j(t)\subset B\bigg(x_j^*(t),\frac{v_\star(t-\Upsilon_1)}{2}+3\bigg).$$
Thus, for $j\neq k$ we have  $$\operatorname{supp}\psi_j(t)\cap \operatorname{supp}\psi_k(t)\subset B\bigg(x_j^*(t),\frac{v_\star(t-\Upsilon_1)}{2}+3\bigg)\cap B\bigg(x_k^*(t),\frac{v_\star(t-\Upsilon_1)}{2}+3\bigg)=\emptyset.$$
    Indeed, \begin{equation*}
        \abs{x_j^*(t)-x_k^*(t)}\geq v_\star t-\max_{j,k}\abs{x_j-x_k}> v_\star (t-\Upsilon_1)+6.\qedhere
    \end{equation*}
\end{proof}
In the multi-soliton regime, the solitons interact weakly but in a nontrivial
manner. To capture this structure, it is essential to localize the action around each
soliton. 

We introduce localized versions of the energy, charge and momentum. For $j = 0, ..., N$ we define
$$M_j(t,v):=\frac{1}{2}\int\abs{v}^2\psi_j(t) \mathrm{d}x, \quad \mathcal{J}_j(t,v):=\frac12\Im\int \Bar{v}\nabla v \psi_j(t) \mathrm{d}x.$$
$$E_j(t,v):= \frac{1}{2}\int |\nabla v|^2\psi_j(t) \mathrm{d}x-\frac{1}{2}\int\abs{v}^2\big(\log\abs{v}^2-1\big)\psi_j(t) \mathrm{d}x-\int G(|v|^2)\psi_j(t) \mathrm{d}x.$$
\\
Then, we also define as well localized actions (for $j = 0, . . . , N $) by
$$S_j^{\text{\rm loc}}(v)=E_j(v)+(\omega_j+\frac{\abs{v_j}^2}{4})M_j(v)+v_j\cdot\mathcal{J}_j(v),$$
where for $j=0$ we take $\omega_0=0$ and $v_0=0$.
Finally, we define a localized action-like functional for multi-solitons:
$$S^{\text{\rm loc}}(v):=\sum_{j=1}^N S_j^{\text{\rm loc}}(v).$$

\subsubsection{Almost-coercivity}
\begin{proposition}[Almost-coercivity]\label{bot2}
    For all $t\in [\Upsilon^\dagger,T_n]$, we have
    $$
    S^{\rm loc}(t,u_n(t)) - \sum_{j=1}^N S_j(Q_j) 
    \geq \frac{1}{2} \|\nabla w_n(t)\|_{L^2}^2 - C \Upsilon_*^{-1/2} e^{-\frac{(v_\star (t-\Upsilon_1))^2}{4}}.
    $$
\end{proposition}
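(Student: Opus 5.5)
We expand $S^{\rm loc}(t,u_n)$ around $Q$ by writing $u_n=Q+w_n$ and treating each localized action $S_j^{\rm loc}$ separately on the support of $\psi_j$. Note that $S^{\rm loc}$ as defined sums only over $j\ge1$. On $\operatorname{supp}\psi_j$ the solution $Q$ coincides with $Q_j$ up to Gaussian tails of the other solitons. By Lemma~\ref{011} and Corollary~\ref{0111}, together with the separation $|x_j^*(t)-x_k^*(t)|\ge v_\star(t-\Upsilon_1)+6$, those tails are $O(e^{-(v_\star(t-\Upsilon_1))^2/4})$ in $H^1$. The same bound holds for $Q_j(1-\psi_j)$, using Lemma~\ref{gausse}. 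Hence the zeroth-order term $\sum_j S_j^{\rm loc}(Q)$ equals $\sum_j S_j(Q_j)$ up to $O(e^{-(v_\star(t-\Upsilon_1))^2/4})$.

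The first-order term is handled with \eqref{147}. Expanding the quadratic parts (kinetic, mass, momentum) exactly, the linear terms in $w_n$ localized by $\psi_j$ combine with the derivatives of the nonlinear terms into $\Re\int(\text{left side of }\eqref{147})\overline{w_n}\psi_j$. This vanishes, except for commutator terms containing $\nabla\psi_j$ and the interaction/tail terms. The commutator terms are supported in the annulus $|x-x_j^*|\approx v_\star(t-\Upsilon_1)/2$, where $Q_j$ and $\nabla Q_j$ are Gaussian small. They are therefore bounded by $e^{-(v_\star(t-\Upsilon_1))^2/8-c t^2}\|w_n\|_{H^1}$, which is $\lesssim \Upsilon_*^{-1/2}e^{-(v_\star(t-\Upsilon_1))^2/4}$ when combined with \eqref{as}. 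I would choose $\Upsilon_*$ large to absorb constants, possibly using that the annulus radius exceeds $v_\star(t-\Upsilon_1)/2$ by $2$.

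The second-order remainder is the main obstacle, because $z\mapsto|z|^2\log|z|^2$ is not $\mathcal{C}^2$. Following \cite{15}, I would use a weak expansion instead of a Taylor expansion: for $z_1,z_2\in\mathbb{C}$, the remainder of $|z_1+z_2|^2\log|z_1+z_2|^2-|z_1|^2\log|z_1|^2-2\Re(\overline{z_2}z_1(\log|z_1|^2+1))$ is bounded above by $C|z_2|^2(1+|\log|z_1||)$ plus a nonpositive-sign term. More simply, we only need the one-sided bound $-\tfrac12(\cdots)\ge -C|z_2|^2(1+\log^-\!\ldots)$. The problematic region is where $|Q|$ is tiny, where $\log|Q|$ is large. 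There I would use the convexity-type fact that $-|z|^2\log|z|^2$ contributes with a favourable sign once $|z|$ is small, as exploited in \cite{15}. Consequently the logarithmic remainder is bounded below by $-C\|w_n\|_{L^2}^2-C\|w_n\|_{L^2}^{2}\log$-type terms, which are controlled by $\|w_n\|_{L^2}^2\lesssim \Upsilon_*^{-1}e^{-(v_\star(t-\Upsilon_1))^2/4}$ from Lemma~\ref{bot1}. The $G$ part is $\mathcal{C}^2$ away from zero, and by \ref{A1}--\ref{A2} (as in Lemma~\ref{012}/Lemma~\ref{power}) its remainder is $\lesssim\|w_n\|_{L^2}^2+\|w_n\|_{H^1}^{2\sigma+2}$. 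The last term is $\le e^{-(\sigma+1)(v_\star(t-\Upsilon_1))^2/4}$, which is negligible.

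It remains to collect the quadratic terms. Summing the localized kinetic terms gives $\frac12\int|\nabla w_n|^2\sum_{j\ge1}\psi_j$. The missing piece $\frac12\int|\nabla w_n|^2\psi_0$ must be treated separately. The localized momentum terms $v_j\cdot\tfrac12\Im\int\overline{w_n}\nabla w_n\psi_j$ are bounded by $\varepsilon\|\nabla w_n\|^2+C_\varepsilon\|w_n\|_{L^2}^2$, and the mass terms by $C\|w_n\|_{L^2}^2$. To obtain the coefficient $\frac12$ on the whole of $\|\nabla w_n\|_{L^2}^2$, I would compute modulo completing the square: $\frac12|\nabla w+\tfrac{i}{2}v_jw|^2$ absorbs the momentum and mass pieces without loss on the gradient. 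I expect to need either to include $\psi_0$ (with $v_0=0$) or to check that the paper's convention absorbs it. All lower-order terms are $O(\|w_n\|_{L^2}^2)$ and are controlled by Lemma~\ref{bot1}, which yields the stated bound.
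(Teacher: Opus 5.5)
Your skeleton matches the paper's: localized actions, cancellation of the linear terms via \eqref{147} up to $\nabla\psi_j$-commutators, the one-sided weak expansion of $F$ (Lemma~\ref{pop}), and the $L^2$ bound of Lemma~\ref{bot1}. Expanding around $Q$ with remainder in $w_n$, rather than around $Q_j$ with remainder in $w_n^j=w_n+\sum_{k\neq j}Q_k$ as in Lemma~\ref{ma}, is only a bookkeeping change. The interactions move into the linear term, where Lemma~\ref{logest} and Lemma~\ref{power} times $\|w_n\|_{L^2}$ handle them.

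Two clarifications on the remainder. The usable bound involves $\log\max(|u_n|,|Q|)\le C+\log(1+|w_n|)$. A bound in terms of $|\log|Q||$ or its negative part would be useless, since $|\log|Q_j||\sim|x-x_j^*(t)|^2$. Moreover, the resulting term $\int|w_n|^2\log(1+|w_n|)\psi_j$ is controlled through $H^1\hookrightarrow L^{2+\delta}$ and \eqref{as}, not by $\|w_n\|_{L^2}^2$.

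Beyond this, two steps the statement needs are missing. The first is the region where $\psi_0$ lives. You rightly notice that the $j\ge1$ pieces only produce $\frac12\int|\nabla w_n|^2(1-\psi_0)$, but you leave this open. The paper's proof includes $S_0^{\rm loc}=E_0$ in $S^{\rm loc}$; see \eqref{gui} and the identity $\tilde S_n=E(u_n)+\dots$ in the slow-variation part. It then supplies Lemma~\ref{bot22}: $E_0(t,u_n)\ge\frac12\int|\nabla w_n|^2\psi_0-C\Upsilon_*^{-1}e^{-\frac{(v_\star(t-\Upsilon_1))^2}{4}}$. This is proved directly. Since $|u|^2(\log|u|^2-1)\le0$ where $|u|^2\le e$, the logarithmic part is bounded below by $-C\|u_n\|^{2+\delta}_{L^{2+\delta}(\psi_0\mathrm{d}x)}$. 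Also $|G(|u_n|^2)|\lesssim|u_n|^2+|u_n|^{2\sigma+2}$, and $u_n$ is small on $\operatorname{supp}\psi_0$ by Lemma~\ref{est2}, Lemma~\ref{bot1} and \eqref{as}. Without this piece you do not get the full $\frac12\|\nabla w_n\|_{L^2}^2$.

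The second gap is the momentum term. Completing the square only gives $\frac12\int|\nabla w+\frac i2v_jw|^2\psi_j$, and returning to $\frac12\int|\nabla w|^2\psi_j$ costs exactly $v_j\cdot\mathcal J_j(w)$ again. Young's inequality absorbed into the gradient gives $\frac12-\varepsilon$, not $\frac12$. Nor is this an $O(\|w_n\|_{L^2}^2)$ term, as your last sentence suggests.

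The missing idea is to spend the bootstrap hypothesis \eqref{as} on the gradient factor. By Lemma~\ref{bot1} and \eqref{as}, $|\mathcal J_j(w)|\le\|w\|_{L^2(\psi_j\mathrm{d}x)}\|\nabla w\|_{L^2(\psi_j\mathrm{d}x)}\lesssim\Upsilon_*^{-1/2}e^{-\frac{(v_\star(t-\Upsilon_1))^2}{8}}\cdot e^{-\frac{(v_\star(t-\Upsilon_1))^2}{8}}$. This keeps the coefficient $\frac12$ intact, and it is exactly where the factor $\Upsilon_*^{-1/2}$ in the statement comes from.

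Similarly, your zeroth-order error $O\bigl(e^{-(v_\star(t-\Upsilon_1))^2/4}\bigr)$ is too weak. Every error term must carry a negative power of $\Upsilon_*$, or the bootstrap cannot be closed. That gain comes from the extra margins in the separation and in the cut-off radius (the factors $(t-\Upsilon_1)^{-3}$ and $\Upsilon_*^{-1}$ in Lemmas~\ref{est1}, \ref{est2}, \ref{est4}), which you invoke only for the commutator terms.
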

We first observe that
\begin{equation}\label{gui}
\begin{aligned}
S^{\rm loc}(t,u_n(t)) - \sum_{j=1}^N S_j(Q_j)
&= S_0(t,u_n(t))+\sum_{j=1}^N\Big( S^{\rm loc}_j(t,u_n(t)) -  S_j^{\rm loc}(t,Q_j(t)) \Big) \\
&\qquad- \sum_{j=1}^N \Big( S_j(Q_j) - S_j^{\rm loc}(t,Q_j(t)) \Big).
\end{aligned}
\end{equation}
We estimate each term separately. For the third term, we use several estimates on $Q_j$ outside its region of concentration. For the second term, the proof relies on linearizing the functional $S^{\rm loc}$ with respect to $w_n$ around the profile $Q$.

As already mentioned, the energy functional $E$ is not of class $\mathcal{C}^2$. Nevertheless, following the approach in \cite{15}, we introduce
$$
F(s) = |s|^2 \big(\log |s|^2 - 1\big).
$$
The following result, proved in \cite{15}, will play a key role in the sequel.
\begin{lemma}\cite[Lemma 3.18]{15}\label{pop}
    For all $z_1,z_2\in \mathbb{C}$, set $z=z_1-z_2$, then $$F(z_1)-F(z_2)-dF(z_2)(z)\leq 2 \abs{z}^2\bigl(\log(\max(\abs{z_1},\abs{z_2}))+1\bigr).$$
\end{lemma}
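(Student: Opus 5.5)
The plan is a one-variable Taylor expansion with integral remainder along the segment from $z_2$ to $z_1$, combined with a pointwise upper bound on the second derivative of $F$; the singularity of $F$ at the origin is the only delicate point. If $z=0$ the inequality is trivial, so assume $z\neq0$. Identify $\mathbb{C}$ with $\mathbb{R}^2$ and set $h(\theta):=F(z_2+\theta z)$ for $\theta\in[0,1]$.

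First, I compute the derivatives of $F$. Since $F(s)=|s|^2(\log|s|^2-1)$, for $s\neq0$ we have
$$
dF(s)(k)=2\,\Re(\bar s k)\,\log|s|^2 .
$$
This expression extends continuously by $0$ at $s=0$, because $|s|\log|s|^2\to0$, so $F\in\mathcal{C}^1(\mathbb{C})$. For $s\neq 0$,
$$
d^2F(s)(k,k)=2|k|^2\log|s|^2+\frac{4\,(\Re(\bar s k))^2}{|s|^2}\le 4|k|^2\bigl(\log|s|+1\bigr),
$$
where the inequality uses Cauchy--Schwarz, $(\Re(\bar s k))^2\le |s|^2|k|^2$.

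Second, I justify the Taylor formula with integral remainder,
$$
h(1)-h(0)-h'(0)=\int_0^1(1-\theta)\,h''(\theta)\,\mathrm{d}\theta .
$$
The function $h$ is $\mathcal{C}^1$ on $[0,1]$, with $h'(\theta)=dF(z_2+\theta z)(z)$. Since $z\neq0$, the segment meets the origin for at most one value $\theta_0$. Away from $\theta_0$, $h$ is $\mathcal{C}^2$ and $h''(\theta)=d^2F(z_2+\theta z)(z,z)$. Near $\theta_0$ one has $|h''(\theta)|\lesssim 1+|\log|\theta-\theta_0||$, which is integrable. Hence $h'$ is absolutely continuous on $[0,1]$, and the formula holds. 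This step is the main technical obstacle, but it amounts only to checking that the logarithmic singularity is integrable and that $h'$ has no jump at $\theta_0$, which follows from $F\in\mathcal{C}^1$.

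Third, I bound the remainder. By convexity of the modulus, $|z_2+\theta z|=|(1-\theta)z_2+\theta z_1|\le M:=\max(|z_1|,|z_2|)$. Since $\log$ is increasing, for $\theta\neq\theta_0$,
$$
h''(\theta)\le 4|z|^2(\log M+1).
$$
Only an upper bound is needed, so the fact that $\log|z_2+\theta z|$ may tend to $-\infty$ causes no difficulty. Since $1-\theta\ge0$ and $\int_0^1(1-\theta)\,\mathrm{d}\theta=\tfrac12$, we obtain
$$
F(z_1)-F(z_2)-dF(z_2)(z)=\int_0^1(1-\theta)h''(\theta)\,\mathrm{d}\theta\le 2|z|^2\bigl(\log M+1\bigr),
$$
which is the claimed estimate.
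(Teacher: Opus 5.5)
Your proof is correct and complete. The Hessian bound $d^2F(s)(k,k)\le 4|k|^2(\log|s|+1)$, the convexity bound $|z_2+\theta z|\le\max(|z_1|,|z_2|)$, and $\int_0^1(1-\theta)\,\mathrm{d}\theta=\tfrac12$ give exactly the stated constant. Your absolute-continuity argument also correctly handles the only delicate case, where the segment $[z_2,z_1]$ passes through the origin and $h''$ has an integrable logarithmic singularity.

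There is no route in the paper to compare with: it gives no proof of this lemma and simply quotes \cite[Lemma 3.18]{15}. Your Taylor-expansion argument is therefore a self-contained justification of that weak linearization.
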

Taking $z_1=u_n(t,x)$ and $z_2=Q_j(t,x)$ we get 
\begin{corollary}\label{loglog}
    For all $x\in \mathbb{R}^d$, $t\geq 0$, $j=1,...,N$ and $n\in \mathbb{N}$, there holds 
    $$F(u_n(t,x))-F(Q_j(t,x))-dF(Q_j(t,x))(w_n^j(t,x))\leq 2 \abs{w_n^j(t,x)}^2\bigg(\log(\abs{w_n^j(t,x)}+1)+C\bigg).$$
\end{corollary}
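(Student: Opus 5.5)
This is Corollary \ref{loglog}. Here $w_n^j := u_n - Q_j$, so that $u_n = Q_j + w_n^j$. The plan is to apply Lemma \ref{pop} pointwise with $z_1 = u_n(t,x)$ and $z_2 = Q_j(t,x)$, so that $z = w_n^j(t,x)$. This gives
\[
F(u_n)-F(Q_j)-dF(Q_j)(w_n^j)\le 2\abs{w_n^j}^2\bigl(\log(\max(\abs{u_n},\abs{Q_j}))+1\bigr).
\]
It then only remains to bound $\log(\max(\abs{u_n},\abs{Q_j}))+1$ from above by $\log(\abs{w_n^j}+1)+C$, with $C$ independent of $x,t,j,n$. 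Since this bracket is multiplied by the nonnegative factor $2\abs{w_n^j}^2$, an upper bound on it is all we need.

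\textbf{Step 1: uniform bound on $Q_j$.} By \eqref{000}, $\abs{Q_j(t,x)} = \Phi_{\omega_j}(x-v_jt-x_j)$. By Lemma \ref{011}, this is at most $C_2$, uniformly in $t$ and $x$. Set $A := \max_j \max(C_2^{(j)},1)\ge 1$.

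\textbf{Step 2: bound the maximum.} By the triangle inequality, $\abs{u_n}\le \abs{w_n^j}+\abs{Q_j}$. Hence
\[
\max(\abs{u_n},\abs{Q_j})\le \abs{w_n^j}+A\le A(\abs{w_n^j}+1),
\]
where the last inequality uses $A\ge1$. Since $\log$ is increasing, we get $\log(\max(\abs{u_n},\abs{Q_j}))\le \log(\abs{w_n^j}+1)+\log A$. Note that when the maximum is small the logarithm is very negative, so the upper bound is trivially true there. Likewise, if $u_n = Q_j = 0$, both sides vanish with the convention $0\log 0=0$, which is consistent with $F(0)=0$.

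\textbf{Step 3: conclude.} Taking $C := \log A + 1$ yields the claimed inequality.

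There is no real obstacle. The only points to check are that Lemma \ref{pop} holds for all complex pairs, including when one of them vanishes, and that the constant $C$ does not depend on $n$ or $t$. The latter is guaranteed because the sup norm of each profile $\Phi_{\omega_j}$ is finite (they are continuous and decaying), and there are finitely many $j$.
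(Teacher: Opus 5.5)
Your proposal is correct and follows the paper's proof essentially verbatim. You apply Lemma~\ref{pop} pointwise with $z_1=u_n(t,x)$, $z_2=Q_j(t,x)$, then use the uniform bound $\|Q_j(t)\|_{L^\infty}\le C$ from Lemma~\ref{011} and $|u_n|\le |w_n^j|+C$ to get $\log\max(|u_n|,|Q_j|)\le \log(1+|w_n^j|)+\text{const}$; your normalization $A\ge 1$ is just a tidier version of the paper's $|\log C|$ step, and the degenerate case $u_n=Q_j=0$ never occurs since $\Phi_{\omega_j}>0$.
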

\begin{proof}
    Taking $z_1=u_n(t,x)$ and $z_2=Q_j(t,x)$ in Lemma \ref{pop}, we get 
    $$F(u_n(t,x))-F(Q_j(t,x))-dF(Q_j(t,x))(w_n^j(t,x))\leq 2 \abs{w_n^j(t,x)}^2\bigl(\log(\max(\abs{u_n(t,x)},\abs{Q_j(t,x)}))+1\bigr).$$
    On the other hand, we have
$$\quad \norm{Q_j(t)}_{L^\infty}\leq C  \quad \text{ and } \abs{u_n(t,x)}\leq \abs{w_n^j(t,x)}+C.$$
Thus,
\begin{equation*}
    \log(\max(\abs{u_n(t,x)},\abs{Q_j(t,x)}))\leq \log(C+\abs{w_n^j(t,x)})\leq \abs{\log(C)}+\log(1+\abs{w_n^j(t,x)}).\qedhere
\end{equation*}

\end{proof}
\begin{lemma}\label{ma}
    For all $t\in [\Upsilon^\dagger,T_n]$ and for each $j = 1, \dots, N$, we have
    \begin{align*}
        S_j^{\text{\rm loc}}(t,u_n(t)) - S_j^{\text{\rm loc}}(t,Q_j(t))
    &\geq H_j(t,w_n^j(t)) 
    - \int \Re\!\big(\nabla Q_j(t) \overline{w_n^j(t)} \nabla \psi_j(t)\big)\mathrm{d}x
    \\&\quad 
    - \frac12v_j \cdot \Im\! \int \overline{Q_j(t)}  w_n^j(t) \nabla \psi_j(t)\mathrm{d}x,
    \end{align*}
    where $w_n^j = u_n - Q_j = w_n - \sum_{k \neq j} Q_k$ and
    \begin{align*}
        H_j(t,w) :&= \frac{1}{2} \int |\nabla w(t)|^2 \psi_j(t)\mathrm{d}x 
    - \int |w(t)|^2 \log\!\big(1 + |w(t)|\big) \psi_j(t)\mathrm{d}x 
    + \Bigl(\omega_j + \frac{|v_j|^2}{4}-C\Bigr) M_j(w) 
    \\&\quad + v_j \cdot \mathcal{J}_j(w) 
    - C \int |w(t)|^{2\sigma+2} \psi_j(t)\mathrm{d}x.
    \end{align*}
\end{lemma}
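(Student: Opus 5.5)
We expand each piece of $S_j^{\rm loc}(t,u_n)-S_j^{\rm loc}(t,Q_j)$ in $w:=w_n^j=u_n-Q_j$. We then use the profile equation \eqref{147} to cancel all terms linear in $w$, except those carrying $\nabla\psi_j$. The plan is to write $u_n=Q_j+w$ and treat the four parts (kinetic, mass, momentum, nonlinear) separately, keeping the weight $\psi_j$ throughout.

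\emph{Quadratic parts.} For the quadratic parts the expansion is exact:
\[
\tfrac12\int|\nabla u_n|^2\psi_j-\tfrac12\int|\nabla Q_j|^2\psi_j=\Re\int\nabla Q_j\cdot\overline{\nabla w}\,\psi_j+\tfrac12\int|\nabla w|^2\psi_j .
\]
The mass difference is $\Re\int Q_j\overline w\psi_j+M_j(w)$. For the momentum,
\[
\mathcal J_j(u_n)-\mathcal J_j(Q_j)=\tfrac12\Im\int(\overline{Q_j}\nabla w+\overline w\nabla Q_j)\psi_j+\mathcal J_j(w).
\]
Next we integrate by parts in the cross terms so that they become pairings of $w$ against the left side of \eqref{147}. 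We get $\Re\int\nabla Q_j\cdot\overline{\nabla w}\,\psi_j=\Re\int(-\Delta Q_j)\overline w\psi_j-\Re\int\overline w\,\nabla Q_j\cdot\nabla\psi_j$. Likewise $\Im\int\overline{Q_j}\nabla w\,\psi_j=-\Im\int w\nabla\overline{Q_j}\psi_j-\Im\int\overline{Q_j}w\nabla\psi_j$, which equals $\Im\int\overline w\nabla Q_j\psi_j-\Im\int\overline{Q_j}w\nabla\psi_j$. So the momentum cross term becomes $\Im\int\overline w\nabla Q_j\psi_j-\tfrac12\Im\int\overline{Q_j}w\nabla\psi_j$, and $v_j\cdot\Im\int\overline w\nabla Q_j\psi_j=\Re\int(iv_j\cdot\nabla Q_j)\overline w\psi_j$. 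The two boundary terms are exactly the two $\nabla\psi_j$ terms in the statement.

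\emph{Nonlinear part.} We keep $dF(Q_j)(w)=2\Re\big(Q_j\log|Q_j|^2\,\overline w\big)$ as the linear term. By Corollary \ref{loglog}, the logarithmic part $-\tfrac12\int(F(u_n)-F(Q_j))\psi_j$ is bounded below by $-\Re\int Q_j\log|Q_j|^2\overline w\psi_j-\int|w|^2(\log(1+|w|)+C)\psi_j$. For the $G$-part we write
\[
G(|u_n|^2)-G(|Q_j|^2)=\Re\big(Q_jg(|Q_j|^2)\overline w\big)+R,
\]
with a Taylor remainder $|R|\lesssim|w|^2+|w|^{2\sigma+2}$. This holds because $z\mapsto G(|z|^2)$ is $\mathcal C^1$ with derivative $z g(|z|^2)$. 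By \ref{A1}--\ref{A2} the map $z\mapsto zg(|z|^2)$ is Lipschitz on bounded sets (using $sg'(s)\to0$) and grows at most like $|z|^{2\sigma+1}$, and $Q_j$ is bounded in $L^\infty$. We would invoke Lemma \ref{power} of the appendix for this, as in Lemma \ref{012}.

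\emph{Collecting terms.} The linear terms add up to $\Re\int\big(-\Delta Q_j-Q_j\log|Q_j|^2-Q_jg(|Q_j|^2)+(\omega_j+\tfrac{|v_j|^2}{4})Q_j+iv_j\cdot\nabla Q_j\big)\overline w\psi_j$, which vanishes by \eqref{147}. What remains is $H_j(t,w)$, once the $C\int|w|^2\psi_j$ contributions are absorbed into the $-C M_j(w)$ term, together with the two $\nabla\psi_j$ terms.

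\emph{Main obstacle.} The delicate step is the $G$-remainder. One needs a pointwise bound $|G(|Q+w|^2)-G(|Q|^2)-\Re(Qg(|Q|^2)\overline w)|\lesssim|w|^2+|w|^{2\sigma+2}$ for $|Q|\le C$, while $g$ is only $\mathcal C^1$ away from $0$. We would handle it via the mean value theorem on $s\mapsto (Q+sw)g(|Q+sw|^2)$, splitting into the cases $|w|\le 1$ (local Lipschitz bound) and $|w|>1$ (growth bound from \ref{A2}). Sign bookkeeping in the momentum integration by parts also needs care.
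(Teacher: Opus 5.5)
Your proposal follows the paper's proof step for step: exact expansion of the quadratic parts, Corollary~\ref{loglog} for the $F$-part, the $G$-remainder via Lemma~\ref{power}, and cancellation of the linear terms by \eqref{147}. All of that is fine except one identity. The claim $v_j\cdot\Im\int\overline w\nabla Q_j\psi_j=\Re\int(iv_j\cdot\nabla Q_j)\overline w\psi_j$ is off by a sign, because $\Re(iz)=-\Im z$.

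With $\mathcal J_j(v)=\frac12\Im\int\overline v\nabla v\,\psi_j$ and $+v_j\cdot\mathcal J_j$ in $S_j^{\rm loc}$, your linear terms therefore add up to $\Re\int L_j\,\overline w\psi_j+2v_j\cdot\Im\int\overline w\nabla Q_j\psi_j=2v_j\cdot\Im\int\overline w\nabla Q_j\psi_j$. Here $L_j$ is the left side of \eqref{147}, so the first integral vanishes, but the second does not. This leftover is linear in $w$. Its $w_n$-part is a priori only of size $\|w_n\|_{L^2}$, i.e.\ $e^{-(v_\star(t-\Upsilon_1))^2/8}$, while closing the bootstrap needs errors of order $e^{-(v_\star(t-\Upsilon_1))^2/4}$. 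So the step fails as written.

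The root cause is that \eqref{147} makes $Q_j$ a critical point of $E+(\omega_j+\frac{|v_j|^2}{4})M-v_j\cdot\mathcal J$, not of $S_j$ as defined. You can check this on the family $\eta\mapsto\Phi_{\omega_j}e^{i\eta\cdot x/2}$. Along it, $E+cM+b\cdot\mathcal J=E(\Phi_{\omega_j})+\big(\frac{|\eta|^2}{4}+c+\frac{b\cdot\eta}{2}\big)M(\Phi_{\omega_j})$, which is stationary at $\eta=v_j$ only if $b=-v_j$.

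The paper's own proof makes the same slip. When it takes the real part of \eqref{147} against $\overline{w_n^j}\psi_j$, it records $+v_j\cdot\Im\int\overline{w_n^j}\nabla Q_j\psi_j$ instead of $-v_j\cdot\Im\int\overline{w_n^j}\nabla Q_j\psi_j$. So this is a sign-convention inconsistency inherited from the source, not a flaw in your strategy.

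The repair is to use $-v_j\cdot\mathcal J$ in $S_j$ and $-v_j\cdot\mathcal J_j$ in $S_j^{\rm loc}$. Equivalently, define the momentum as $\frac12\Im\int v\nabla\overline v$. Then your cancellation is exact, with two changes:
\begin{itemize}
\item $H_j$ carries $-v_j\cdot\mathcal J_j(w)$ instead of $+v_j\cdot\mathcal J_j(w)$;
\item the last boundary term becomes $+\frac12v_j\cdot\Im\int\overline{Q_j}\,w_n^j\nabla\psi_j$.
\end{itemize}
Neither change matters downstream, since these terms, and the localized momenta in the slow-variation argument, are only ever estimated in absolute value.
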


\begin{proof}
Recall that $
G(|z|^2)=\frac12\int^{\abs{z}^2}_0g(s)\mathrm{d}s=\int^{\abs{z}}_0 sg(s^2)\mathrm{d}s
$
, we have
$$
dF(Q_j)(w_n^j)
=2\Re\!\big(Q_j\overline{w_n^j}\big)\log(|Q_j|^2), \quad dG(|Q_j|^2)(w_n^j)
=g(|Q_j|^{2})\Re\big(Q_j\overline{w_n^j}\big).
$$
By Corollary \ref{loglog}, we have 
$$
-F(u_n)+F(Q_j)+dF(Q_j)(w_n^j)
\geq
-2|w_n^j|^2\big(\log(1+|w_n^j|)+C\big),
$$
and
\begin{align*}
     \abs{G(|u_n|^2)- G(|Q_j|^2)- dG(|Q_j|^2)(w_n^j)}
&=
\abs{\int_0^1
dG(|Q_j+\tau w_n^j|^2)(w_n^j)-dG(|Q_j|^2)(w_n^j)\mathrm{d}\tau}
\\& = \abs{\Re\int_0^1 \Big((Q_j+\tau w_n^j)g\big(\abs{Q_j+\tau w_n^j}^2\big)-Q_jg\big(\abs{Q_j}^2\big)\Big)\overline{w_n^j}\mathrm{d}\tau}
\\&\lesssim
|w_n^j|^2
+|w_n^j|^{2\sigma+2}.
\end{align*}
In the last inequality we use Lemma \ref{012} and the fact that $Q_j$ is bounded in $L^{\infty}$. Thus,
\begin{align*}
    E_j^{\rm loc}(u_n(t))-E_j^{\rm loc}(Q_j(t))&\geq \frac12\int\abs{\nabla w_n^j(t)}^2\psi_j(t)\mathrm{d}x-\int \abs{w_n^j(t)}^2\log(\abs{w_n^j(t)}+1)\psi_j(t)\mathrm{d}x\\&\quad-\int\Re(Q_j(t)\overline{w_n^j(t)})g(\abs{Q_j(t)}^2)\psi_j(t)\mathrm{d}x+\int \Re (\nabla Q_j(t)\overline{\nabla w_n^j(t))}\psi_j(t)\mathrm{d}x\\&\quad -\int \Re(Q_j(t)\overline{w_n^j(t)})\log(\abs{Q_j(t)}^2)\psi_j(t)\mathrm{d}x-C\int\abs{w_n^j(t)}^2\psi_j(t)\mathrm{d}x\\&\quad - C \int |w_n^j(t)|^{2\sigma+2} \psi_j(t)\mathrm{d}x.
\end{align*}
and 
\begin{align*}
    M_j^{\rm loc}(u_n(t))-M_j^{\rm loc}(Q_j(t))=\int\Re(Q_j(t)\overline{w_n^j(t)})\psi_j(t)\mathrm{d}x+\frac12\int\abs{w_n^j(t)}^2\psi_j(t)\mathrm{d}x.
\end{align*}

\begin{align*}
    \mathcal{J}_j^{\rm loc}(u_n(t))-\mathcal{J}_j^{\rm loc}(Q_j(t))&=\frac12\Im\int \overline{w_n^j(t)}\nabla w_n^j(t)\psi_j(t)\mathrm{d}x-\frac12\int \Im(\overline{Q_j(t)} w_n^j(t)\nabla\psi_j(t)\mathrm{d}x\\&\quad+\int \Im(\overline{w_n^j(t)}\nabla Q_j(t))\psi_j(t)\mathrm{d}x.
\end{align*}
Now observe that $Q_j$ satisfies
$$
-\Delta Q_j
- Q_j \log\!\big(|Q_j|^2\big)
- Q_j g(|Q_j|^{2})
+\Big(\omega_j+\frac{|v_j|^2}{4}\Big) Q_j
+ i v_j \cdot \nabla Q_j
=0.
$$
Multiplying this equation by $w_n^j(t)\psi_j(t)$, taking the real part, and integrating over $\mathbb{R}^d$, we deduce that
\begin{align*}
    &\int \Re (\nabla Q_j(t)\overline{\nabla w_n^j(t))}\psi_j(t)\mathrm{d}x-\int \Re(Q_j(t)\overline{w_n^j(t)})\log(\abs{Q_j(t)}^2)\psi_j(t)\mathrm{d}x
    \\&\quad-\int\Re(Q_j(t)\overline{w_n^j(t)})g(\abs{Q_j(t)}^{2})\psi_j(t)\mathrm{d}x+v_j\cdot\int \Im(\overline{w_n^j(t)}\nabla Q_j(t))\psi_j(t)\mathrm{d}x\\&\quad+\Big(\omega_j+\frac{|v_j|^2}{4}\Big)\int\Re(Q_j(t)\overline{w_n^j(t)})\psi_j(t)\mathrm{d}x\\&=-\int \Re (\nabla Q_j(t) \overline{w_n^j(t)})\nabla \psi_j(t)\mathrm{d}x.
\end{align*}
This concludes the proof of the lemma.
\end{proof}

We now prove the coercivity of $H_j$. The proof follows exactly the same approach as in \cite{15} for the logarithmic term. For the power term, we use the Sobolev embedding $H^1(\mathbb{R}^d) \hookrightarrow L^{2\sigma+2}(\mathbb{R}^d)$. For this we will need several estimates on $Q_j$ outside its "physical support". Thanks to Lemma~\ref{011}, we derive several estimates following the same computations as in \cite{15}.

For \( j \in \{1,\dots,N\} \), by Lemma~\ref{011} there exist constants 
\( C_1^{(j)}, C_2^{(j)}, r_0^{(j)}, \delta^{(j)} > 0 \) such that, for all 
\( (t,x) \in \mathbb{R}_+ \times \mathbb{R}^d \),
\[
C_1^{(j)} e^{-\delta^{(j)} |x - x_j^*(t)|^2}
\leq |Q_j(t,x)| \leq
C_2^{(j)} e^{-\frac{(|x - x_j^*(t)| - r_0^{(j)})^2}{2}}.
\]
Let
\begin{equation}\label{r}
    r_0 := \max_{1 \le j \le N} r_0^{(j)}.
\end{equation}
\begin{lemma}\label{est1}
Let $r_0$ be defined as in \eqref{r}, and assume that $$\Upsilon_1>\frac{6+2r_0+\max_{j,k}|x_j-x_k|}{v_\star}.$$Then for all $j,k \in \{1, \dots, N\}$ and for $0<\delta<\max(1,\frac{2}{d-2})$, there holds, for all $t\geq \Upsilon_1+\Upsilon_*$ and $\Upsilon_*$ large enough,
\[
\begin{aligned}
&\| Q_j(t) \|_{L^2\bigl((1-\psi_j(t))\mathrm{d}x\bigr)}
+ \| Q_j(t) \|_{L^2\bigl(| D_{xxx}^3 \psi_k(t) |\mathrm{d}x\bigr)}
+ \| Q_j(t) \|_{L^2\bigl(| \nabla \psi_k(t) |\mathrm{d}x\bigr)}
\lesssim (t-\Upsilon_1)^{-3} e^{-\frac{(v_\star (t-\Upsilon_1))^2}{8}}, \\
&\| \nabla Q_j(t) \|_{L^2\bigl((1-\psi_j(t))\mathrm{d}x\bigr)}
+ \| \nabla \psi_k(t) \nabla Q_j(t) \|_{L^2}
\lesssim \Upsilon_*^{-3} e^{-\frac{(v_\star (t-\Upsilon_1))^2}{8}}, \\
&\| Q_j(t) \|_{L^{2+\delta}\bigl((1-\psi_j(t))\mathrm{d}x\bigr)}
\lesssim \Upsilon_*^{-3} e^{-\frac{(v_\star (t-\Upsilon_1))^2}{8}}, \\
&\| Q_j(t) |x - x_j^*(t)|^2 \|_{L^2\bigl((1-\psi_j(t))\mathrm{d}x\bigr)}
+ \| Q_j(t) |x - x_j^*(t)|^3 \|_{L^2\bigl((1-\psi_j(t))\mathrm{d}x\bigr)}
\lesssim \Upsilon_*^{-3} e^{-\frac{(v_\star (t-\Upsilon_1))^2}{8}} .
\end{aligned}\]
\end{lemma}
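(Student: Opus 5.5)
The plan is to reduce every estimate to Gaussian tail integrals of the type handled by Lemma~\ref{gausse}, using the pointwise bounds of Lemma~\ref{011} and Corollary~\ref{0111} on $Q_j$ and its derivatives. The first step is to describe the region of integration. Where $1-\psi_j(t)\neq 0$ we have $|x-x_j^*(t)|\ge R(t):=\frac{v_\star(t-\Upsilon_1)}{2}+1$. For $k\neq j$, $\nabla\psi_k(t)$ and $D^3\psi_k(t)$ are supported in the annulus $\{R(t)\le |x-x_k^*(t)|\le R(t)+2\}$, and are bounded by constants depending only on $\phi$. Since $|x_j^*(t)-x_k^*(t)|\ge v_\star(t-\Upsilon_1)+6$ by the previous lemma, this annulus lies in $\{|x-x_j^*(t)|\ge R(t)+4\}$. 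For $k=j$ the support is $\{|x-x_j^*|\ge R(t)\}$ as well. So every quantity in the statement is bounded by an integral of the form
\[
\int_{|y|\ge R(t)} |y|^{m}\, e^{-(|y|-r_0)^2}\,\mathrm{d}y, \qquad m\in\{0,2,4,6\}.
\]
Here we use $|Q_j|\lesssim e^{-(|y|-r_0)^2/2}$, $|\nabla Q_j|\lesssim (1+|y|)e^{-(|y|-r_0)^2/2}$ and $|y|^2,|y|^3$ weights, after squaring and changing variables $y=x-x_j^*(t)$. Galilean phase derivatives only add a constant factor $|v_j|$ to $\nabla Q_j$.

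The second step is to evaluate the tail integral. Write $e^{-(|y|-r_0)^2}=e^{-(\sqrt2|y|-\sqrt2 r_0)^2/2}$ and substitute $z=\sqrt2 y$; this puts it in the form of Lemma~\ref{gausse} with $r=\sqrt2 R(t)$ and $r_0$ replaced by $\sqrt2 r_0$. The result is
\[
\lesssim R(t)^{d+m-2}e^{-(R(t)-r_0)^2}.
\]
The assumption on $\Upsilon_1$ (namely $v_\star\Upsilon_1>2r_0+\dots$) together with $t\ge\Upsilon_1+\Upsilon_*$ ensures $R(t)\ge \sqrt2+r_0$. Taking square roots gives
\[
\lesssim R^{(d+m-2)/2}e^{-(R-r_0)^2/2}.
\]
Now $(R-r_0)^2/2\ge \frac{(v_\star(t-\Upsilon_1))^2}{8}+\frac{v_\star(t-\Upsilon_1)}{2}(1-r_0)$, which is not quite enough on its own when $r_0>1$. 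To fix this, write $(R-r_0)^2/2 = \frac{(v_\star(t-\Upsilon_1))^2}{8} + \frac{v_\star(t-\Upsilon_1)}{2}(1-r_0)+O(1)$. The second term must be absorbed, and this is the step I expect to be delicate. The way out is to use Gaussian decay more generously: replace the bound $e^{-(R-r_0)^2/2}$ by $e^{-\frac{(v_\star(t-\Upsilon_1))^2}{8}}\cdot e^{-c(t-\Upsilon_1)}$ only after enlarging the constant $2$ in $\psi_j$'s shift. Alternatively, and this is what I would try first, keep the $\psi_j$ as defined and note that the extra $+1$ in $R$ together with the hypothesis $v_\star\Upsilon_1\ge 2r_0+6$ makes $R-r_0\ge \frac{v_\star(t-\Upsilon_1)}{2}+1-r_0$. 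Then I would check whether the polynomial and linear-exponential losses can be dominated by $(t-\Upsilon_1)^{-3}$ times the target Gaussian for $\Upsilon_*$ large. If not, I would split $e^{-(R-r_0)^2/2}$ and use a slightly smaller Gaussian exponent margin, exploiting that the support of $Q_j$ relative to $\psi_k$ ($k\ne j$) is at distance $R+4$ rather than $R$.

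Finally, any remaining polynomial factor $R^{p}e^{-cR}$ is converted into $(t-\Upsilon_1)^{-3}\le \Upsilon_*^{-3}$ for $t-\Upsilon_1\ge\Upsilon_*$ large. For the $L^{2+\delta}$ norm I use the same pointwise bound: $|Q_j|^{2+\delta}\lesssim e^{-(2+\delta)(|y|-r_0)^2/2}$, and the tail integral then decays even faster, namely like $e^{-(1+\delta/2)(R-r_0)^2}$. Taking the $(2+\delta)$-th root gives exactly the exponent $(R-r_0)^2/2$ again, so it is handled by the same computation.
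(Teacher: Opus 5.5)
Your reduction is the paper's: describe the supports of the weights, insert the pointwise bounds of Lemma~\ref{011} and Corollary~\ref{0111}, and evaluate the Gaussian tail with Lemma~\ref{gausse} after a $\sqrt2$-rescaling. You also located the one real difficulty correctly. However, the proposal does not get past it, and none of your three remedies works.

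Consider the terms weighted by $1-\psi_j$, $|\nabla\psi_j|$ and $|D^3_{xxx}\psi_j|$ (the case $k=j$). There the only information is $|x-x_j^*(t)|>R(t)=\frac{v_\star s}{2}+1$, with $s=t-\Upsilon_1$. The upper bound of Lemma~\ref{011} then gives, up to powers of $s$,
\[
e^{-\frac{(R-r_0)^2}{2}}=e^{-\frac{(v_\star s)^2}{8}}\,e^{\frac{(r_0-1)v_\star s}{2}}\,e^{-\frac{(1-r_0)^2}{2}} .
\]
For $r_0>1$ this is an exponentially \emph{growing} loss that no factor $s^{-3}$ can absorb, so the ``check'' in your remedy (b) simply fails. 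The hypothesis on $\Upsilon_1$ does not help here, because $R(t)$ and the target rate are both functions of $t-\Upsilon_1$. That hypothesis only enlarges the separation, $|x_j^*(t)-x_k^*(t)|>v_\star s+6+2r_0$. This is why remedy (c) does handle $k\neq j$: on $\operatorname{supp}\nabla\psi_k$ you get $|x-x_j^*(t)|>R+2+2r_0$ (your ``$R+4$'' is a slip). But it says nothing about $k=j$ or $1-\psi_j$. Remedy (a) changes the objects in the statement.

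The gap cannot be closed for the statement as written. Lemma~\ref{011} gives no smallness of $r_0$, and the estimate itself fails when the actual tail of $\Phi_{\omega_j}$ is centred far out. Decaying solutions of this log-type ODE have tails $\approx Ae^{-(r-c)^2/2}$ for some $c\in\mathbb{R}$ that Lemma~\ref{011} does not control. For instance, in $d=1$ with $g(s)=-\mu s$, letting $\omega\uparrow\log(2/\mu)-2$ produces flat-top profiles with $c\to+\infty$. Since $1-\psi_j=1$ on $\{|x-x_j^*(t)|\ge R+2\}$, if $c\ge 3$ one gets
\[
\|Q_j(t)\|_{L^2((1-\psi_j(t))\mathrm{d}x)}\gtrsim s^{\frac{d-2}{2}}\,e^{-\frac{(v_\star s)^2}{8}}\,e^{\frac{(c-3)v_\star s}{2}},
\]
which contradicts the first estimate as $s\to\infty$.

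The paper's own proof has the same hole. It writes the factor coming from Lemma~\ref{gausse} as $e^{-v_\star(t-\Upsilon_1)}$, which is what one gets only for $r_0=0$.

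The natural repair is your remedy (a) made precise: put $r_0$ into the cut-off,
\[
\psi_j(t,x)=\phi\Big(|x-x_j^*(t)|-\tfrac{v_\star(t-\Upsilon_1)}{2}-r_0-2\Big).
\]
The supports stay disjoint exactly because $v_\star\Upsilon_1>6+2r_0+\max_{j,k}|x_j-x_k|$; the $2r_0$ there is precisely the room needed. On $\operatorname{supp}(1-\psi_j)$ you now have $|x-x_j^*(t)|-r_0>\frac{v_\star s}{2}+1$. Your tail computation then gives $\lesssim s^{p}e^{-\frac{v_\star s}{2}}e^{-\frac{(v_\star s)^2}{8}}\lesssim s^{-3}e^{-\frac{(v_\star s)^2}{8}}$ for $\Upsilon_*$ large. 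The derivative terms, the weights $|x-x_j^*|^2$ and $|x-x_j^*|^3$, and the $L^{2+\delta}$ bound then go through exactly as you describe.
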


For $k\neq j$ we have that $\psi_k\leq 1-\psi_j$, which allows us to conclude the following lemma.
\begin{lemma}\label{est2}
  Let $r_0$ be defined as in \eqref{r}, and assume that $$\Upsilon_1>\frac{6+2r_0+\max_{j,k}|x_j-x_k|}{v_\star}.$$ Then for all $k \ge 0$, $j \ge 1$ such that $k \neq j$ and for $0<\delta<\max(1,\frac{2}{d-2})$, there holds,  for all $t\geq \Upsilon_1+\Upsilon_*$ and $\Upsilon_*$ large enough,
\[
\begin{aligned}
&\| Q_j(t) \|_{L^2\bigl(\psi_k(t)\mathrm{d}x\bigr)}+\|  \psi_k(t)\nabla Q_j(t) \|_{L^2}
+ \| Q_j(t) \|_{L^{2+\delta}\bigl(\psi_k(t)\mathrm{d}x\bigr)}
\lesssim (t-\Upsilon_1)^{-3} e^{-\frac{(v_\star (t-\Upsilon_1))^2}{8}}, \\
& \| \nabla Q_j(t) \|_{L^2\bigl(\psi_k(t)\mathrm{d}x\bigr)}+
\| Q_j(t) |x - x_j^*(t)|^2 \|_{L^2\bigl(\psi_k(t)\mathrm{d}x\bigr)}
+ \| Q_j(t) |x - x_j^*(t)|^3 \|_{L^2\bigl(\psi_k(t)\mathrm{d}x\bigr)}
\lesssim \Upsilon_*^{-3} e^{-\frac{(v_\star (t-\Upsilon_1))^2}{8}} .\\
&\bigl\| Q_j(t) \log \lvert Q_j(t) \rvert^2 \bigr\|_{L^2\bigl((1-\psi_j(t))\mathrm{d}x\bigr)}+\bigl\| Q_j(t) \log \lvert Q_j(t) \rvert^2 \bigr\|_{L^2\bigl(\psi_k(t)\mathrm{d}x\bigr)}
\lesssim \Upsilon_*^{-3} e^{-\frac{ (v_\star (t-\Upsilon_1))^2}{8}} .
\end{aligned}
\]
\end{lemma}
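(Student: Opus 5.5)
Most of these bounds follow directly from Lemma~\ref{est1}, using the pointwise inequality $0\le \psi_k(t)\le 1-\psi_j(t)$ for $k\neq j$, $k\ge 0$. For $k\ge1$ this holds because the supports of $\psi_j$ and $\psi_k$ are disjoint. For $k=0$ it holds because $\psi_0=1-\sum_{l}\psi_l\le 1-\psi_j$.

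It follows that $\|Q_j\|_{L^2(\psi_k\mathrm{d}x)}\le \|Q_j\|_{L^2((1-\psi_j)\mathrm{d}x)}$, and similarly for the $L^{2+\delta}$ norm, the weighted norms with $|x-x_j^*|^2$ and $|x-x_j^*|^3$, and $\|\nabla Q_j\|_{L^2(\psi_k\mathrm{d}x)}$. For $\|\psi_k\nabla Q_j\|_{L^2}$ we also use $\psi_k^2\le\psi_k\le 1-\psi_j$. Each of these quantities is then bounded by the matching quantity in Lemma~\ref{est1}. Because $t-\Upsilon_1\ge\Upsilon_*$, we have $(t-\Upsilon_1)^{-3}\le\Upsilon_*^{-3}$, so the two prefactors can be exchanged wherever that is needed.

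One point needs care: in the first line of the present lemma, $\|\psi_k\nabla Q_j\|_{L^2}$ and $\|Q_j\|_{L^{2+\delta}(\psi_k\mathrm{d}x)}$ carry the prefactor $(t-\Upsilon_1)^{-3}$, while Lemma~\ref{est1} only gives $\Upsilon_*^{-3}$ for them. For these I would not use Lemma~\ref{est1} as a black box. Instead I would redo its computation with Lemma~\ref{011} and Corollary~\ref{0111}. On $\operatorname{supp}(1-\psi_j)$ we have $|x-x_j^*(t)|\ge R:=\frac{v_\star(t-\Upsilon_1)}{2}+1$. Then, by Lemma~\ref{gausse},
\[
\int_{|y|\ge R}|y|^{m}e^{-\frac{p(|y|-r_0)^2}{2}}\mathrm{d}y\lesssim R^{d+m-2}e^{-\frac{p(R-r_0)^2}{2}},\qquad p\in\{2,2+\delta\},
\]
after rescaling $y$ by $\sqrt p$. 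We have $(R-r_0)^2\ge \frac{(v_\star(t-\Upsilon_1))^2}{4}+(1-r_0)v_\star(t-\Upsilon_1)$, and the exponent in the target bound is only $\frac{(v_\star(t-\Upsilon_1))^2}{8}$. The square root of the integral therefore carries an extra factor $e^{-c(v_\star(t-\Upsilon_1))^2}$, which is much stronger than any polynomial factor and absorbs $(t-\Upsilon_1)^{3}$ times the powers of $R$. For the $L^{2+\delta}$ norm, the $1/(2+\delta)$ power keeps an exponent $\frac{(R-r_0)^2}{2}\ge\frac18(v_\star(t-\Upsilon_1))^2+\dots$, so again there is room to spare, uniformly for $t-\Upsilon_1\ge\Upsilon_*$ with $\Upsilon_*$ large. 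The gradient term is handled the same way, using $|\nabla Q_j|\lesssim(1+|v_j|)(1+|x-x_j^*|)e^{-\frac{(|x-x_j^*|-r_0)^2}{2}}$ from Corollary~\ref{0111}.

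The main new bound is the logarithmic term $Q_j\log|Q_j|^2$. The upper bound of Lemma~\ref{011} cannot control $\log|Q_j|$ by itself. The lower bound of Lemma~\ref{011} gives $|\log|Q_j|^2|\lesssim 1+|x-x_j^*(t)|^2$, since $\log|Q_j|^2\ge \log C_1^2-2\delta|x-x_j^*|^2$ and $|Q_j|\le C$. Hence
\[
|Q_j\log|Q_j|^2|\lesssim |Q_j|+|Q_j||x-x_j^*(t)|^2,
\]
and the $L^2((1-\psi_j)\mathrm{d}x)$ norm is bounded by the terms $\|Q_j\|_{L^2((1-\psi_j))}$ and $\|Q_j|x-x_j^*|^2\|_{L^2((1-\psi_j))}$ from Lemma~\ref{est1}, which gives the $\Upsilon_*^{-3}$ bound. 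The $\psi_k$-weighted version then follows from $\psi_k\le 1-\psi_j$. I expect the only real obstacle to be this step, namely justifying that the two-sided Gaussian bounds of Lemma~\ref{011} hold uniformly in $t$ after the Galilean modulation. This is immediate because $|Q_j(t,x)|=\Phi_{\omega_j}(x-x_j^*(t))$.
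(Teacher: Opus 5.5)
Your route is the paper's: reduce to Lemma~\ref{est1} through $\psi_k\le 1-\psi_j$, and handle $Q_j\log|Q_j|^2$ with the lower bound of Lemma~\ref{011}, which gives $|\log|Q_j|^2|\lesssim 1+|x-x_j^*(t)|^2$. You are also right that the $(t-\Upsilon_1)^{-3}$ prefactors on $\|\psi_k\nabla Q_j\|_{L^2}$ and $\|Q_j\|_{L^{2+\delta}(\psi_k\mathrm{d}x)}$ do not follow from the $\Upsilon_*^{-3}$ bounds of Lemma~\ref{est1}.

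The gap is in your recomputation, not in the uniformity in $t$ you flag at the end. Write $s:=v_\star(t-\Upsilon_1)$ and $R=\frac{s}{2}+1$. For $p=2$ the tail integral is $\lesssim R^{d+m-2}e^{-(R-r_0)^2}$, and its square root is
\[
R^{\frac{d+m-2}{2}}\,e^{-\frac{(R-r_0)^2}{2}}
=R^{\frac{d+m-2}{2}}\,e^{-\frac{s^2}{8}}\,e^{-\frac{(1-r_0)s}{2}-\frac{(1-r_0)^2}{2}} .
\]
The quadratic exponent is exactly the target $\frac{s^2}{8}$, because the square root halves $\frac{s^2}{4}$. So there is no extra factor $e^{-cs^2}$. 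The $L^{2+\delta}$ and gradient terms behave the same way. The only spare decay is $e^{-(1-r_0)s/2}$, and it absorbs $R^{\frac{d+m-2}{2}}(t-\Upsilon_1)^{3}$ only if $r_0<1$. Lemma~\ref{011} gives no such smallness, and it cannot in general: if $\Phi_{\omega_j}(r)\gtrsim e^{-(r-c)^2/2}$ for large $r$, its upper bound forces $r_0\ge c$.

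Citing Lemma~\ref{est1} as a black box does not close this. Its proof in the paper produces the factor $e^{-v_\star(t-\Upsilon_1)}$, which the computation gives only when $r_0=0$, as in the Gausson case of~\cite{15}. So every bound in your proposal inherits the same hidden assumption, including the logarithmic one. In fact, if $\Phi_{\omega_j}(r)\gtrsim e^{-(r-c)^2/2}$ with $c>3$, the case $k=0$ is false as stated. In that case $\psi_0\equiv 1$ on the annulus $\frac{s}{2}+3\le|x-x_j^*(t)|\le\frac{s}{2}+4$, and the mass of $Q_j$ there exceeds $(t-\Upsilon_1)^{-3}e^{-s^2/8}$.

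A clean repair is to widen the plateau of $\psi_j$ by $r_0$, i.e.\ take $\psi_j=\phi\big(|x-x_j^*(t)|-\frac{s}{2}-2-r_0\big)$. The supports stay disjoint thanks to the $2r_0$ already in the hypothesis on $\Upsilon_1$. Then $R-r_0=\frac{s}{2}+1$, and the margin $e^{-s/2}$ absorbs every polynomial factor, so your argument goes through verbatim. Alternatively, prove all the bounds with a Gaussian rate slightly below $\frac18$ and propagate that rate through the bootstrap.
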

In the following, we fix a time \begin{equation}\label{time}
    \Upsilon_1:=\frac{7+2r_0 + \max_{j,k} |x_j - x_k|}{v_\star}.
\end{equation}
\begin{lemma}[Coercivity]
For all $t\in [\Upsilon^\dagger,T_n]$ and for $j\in\{1,...,N\}$, we have
$$
H_j(t,w_n^j(t)) \geq \frac{1}{2} \int |\nabla w_n(t)|^2 \psi_j(t)  \mathrm{d}x - C \Upsilon_*^{-1/2} e^{-\frac{(v_\star (t-\Upsilon_1))^2}{4}} .
$$
\end{lemma}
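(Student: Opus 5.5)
Recall that $H_j(t,w)$ consists of the localized kinetic term $\frac12\int|\nabla w|^2\psi_j$, a logarithmic term, a mass term with coefficient $\omega_j+\frac{|v_j|^2}{4}-C$, the momentum term $v_j\cdot\mathcal{J}_j(w)$, and a power term. The plan is to write $w_n^j = w_n - \sum_{k\neq j}Q_k$ and show that every term of $H_j(t,w_n^j)$ other than the kinetic part is either bounded by $\|w_n\|_{L^2}^2$ (controlled by Lemma~\ref{bot1}) or is superquadratically small in $\|w_n\|_{H^1}$ (controlled by the bootstrap assumption \eqref{as}). The contributions of the $Q_k$, $k\neq j$, on $\operatorname{supp}\psi_j$ are handled by the tail estimates of Lemma~\ref{est2}.

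\textbf{Step 1: removing the other solitons.} Since $\psi_k\le 1-\psi_j$, Lemma~\ref{est2} gives
\[
\|Q_k\|_{L^2(\psi_j\mathrm{d}x)}+\|\nabla Q_k\|_{L^2(\psi_j\mathrm{d}x)}+\|Q_k\|_{L^{2+\delta}(\psi_j\mathrm{d}x)}\lesssim \Upsilon_*^{-3}e^{-\frac{(v_\star(t-\Upsilon_1))^2}{8}}
\]
for $k\neq j$. Hence in every quadratic expression one may replace $w_n^j$ by $w_n$ at a cost of $\Upsilon_*^{-3}e^{-\frac{(v_\star(t-\Upsilon_1))^2}{4}}$ plus cross terms bounded by $\|w_n\|_{H^1}\,\Upsilon_*^{-3}e^{-\frac{(v_\star(t-\Upsilon_1))^2}{8}}\lesssim \Upsilon_*^{-3}e^{-\frac{(v_\star(t-\Upsilon_1))^2}{4}}$ via \eqref{as}. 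For the kinetic term, expand $|\nabla w_n^j|^2 = |\nabla w_n|^2 - 2\Re(\nabla w_n\cdot\overline{\nabla\sum_{k\neq j} Q_k}) + |\nabla\sum_{k\neq j}Q_k|^2$, and bound the cross term by Cauchy--Schwarz and the gradient tail estimate.

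\textbf{Step 2: the lower-order terms.} The mass term is directly bounded below by $-C\|w_n^j\|_{L^2(\psi_j)}^2$. For the momentum term, $|v_j\cdot\mathcal{J}_j(w)|\le C\|w\|_{L^2}\|\nabla w\|_{L^2}$, which with Lemma~\ref{bot1} and \eqref{as} is at most $C\Upsilon_*^{-1/2}e^{-\frac{(v_\star(t-\Upsilon_1))^2}{4}}$. This is the source of the $\Upsilon_*^{-1/2}$ factor in the statement.

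For the logarithmic term, use $\log(1+s)\le s^{\delta}/\delta$. Then $\int|w|^2\log(1+|w|)\psi_j\lesssim \|w\|_{L^{2+\delta}}^{2+\delta}$ with $\delta$ small and $H^1$-subcritical. Gagliardo--Nirenberg and \eqref{as} give
\[
\|w\|_{L^{2+\delta}}^{2+\delta}\lesssim \|w\|_{H^1}^{2+\delta}\le e^{-(2+\delta)\frac{(v_\star(t-\Upsilon_1))^2}{8}},
\]
which is $\lesssim \Upsilon_*^{-1/2}e^{-\frac{(v_\star(t-\Upsilon_1))^2}{4}}$ once $\Upsilon_*$ is large: the extra factor $e^{-\delta\frac{(v_\star(t-\Upsilon_1))^2}{8}}$ beats any power of $\Upsilon_*$. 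The power term $C\int|w|^{2\sigma+2}\psi_j$ is treated the same way through the Sobolev embedding $H^1\hookrightarrow L^{2\sigma+2}$, giving $\|w\|_{H^1}^{2\sigma+2}$ with $2\sigma+2>2$.

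\textbf{Main obstacle.} The delicate point is that the logarithmic and power terms must be genuinely superquadratic. Bounding them by $\|w\|_{L^2}^2$ alone is impossible, since $\log(1+|w|)$ is unbounded. So one needs the small-$\delta$ interpolation to extract an extra decaying factor from the Gaussian-in-time bootstrap rate.

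The non-localized norms make the argument go through: $\int |w|^{2+\delta}\psi_j \le \int |w|^{2+\delta}$ because $0\le\psi_j\le1$. The only issue is the $w_n^j$ versus $w_n$ substitution in $L^{2+\delta}$ and $L^{2\sigma+2}$. I would handle it by the elementary inequality $|a+b|^p\lesssim|a|^p+|b|^p$, together with the $L^{2+\delta}$ tail estimate of Lemma~\ref{est2} and, for the $L^{2\sigma+2}$ norm, a similar Gaussian tail bound for $Q_k$ on $\operatorname{supp}\psi_j$ (obtained by interpolating $L^\infty$ with $L^2$). Collecting all the error terms yields the stated inequality.
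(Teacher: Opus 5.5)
Your proposal is correct and follows essentially the same route as the paper. You expand $w_n^j$ into $w_n$ plus the other solitons and control the soliton tails on $\operatorname{supp}\psi_j$ with Lemma~\ref{est2}. You make the logarithmic and power terms superquadratic via $H^1\hookrightarrow L^{2+\delta}$, $H^1\hookrightarrow L^{2\sigma+2}$ and \eqref{as}, and handle the mass and momentum terms with Lemma~\ref{bot1}, correctly identifying the momentum term as the source of the $\Upsilon_*^{-1/2}$ factor. Your $L^\infty$--$L^2$ interpolation for the $L^{2\sigma+2}$ tail of $Q_k$ also supplies a step the paper only asserts.
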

\begin{proof} 
Recall that
\begin{align*}
H_j(t,w_n^j(t)) &:=
\frac{1}{2} \int |\nabla w_n^j(t)|^2 \psi_j(t)  \mathrm{d}x
- \int |w_n^j(t)|^2 \log\!\bigl(1+|w_n^j(t)|\bigr) \psi_j(t)  \mathrm{d}x \\
&\quad - C \int |w_n^j(t)|^{2\sigma+2} \psi_j(t)  \mathrm{d}x \\
&\quad + \Bigl(\omega_j + \frac{|v_j|^2}{4}-C\Bigr) M_j(t,w_n^j(t))
+ v_j \cdot \mathcal{J}_j(t,w_n^j(t)),
\end{align*}
and that $w_n^j = w_n + \sum_{k \neq j} Q_k$. Take $ \Upsilon_* $ large enough, then we can estimate each term using Lemma~\ref{est2}.

$\circ$ \textit{ First term (kinetic part).}
$$
\begin{aligned}
\int |\nabla w_n^j(t)|^2 \psi_j(t)  \mathrm{d}x
&= \int |\nabla w_n(t)|^2 \psi_j(t)  \mathrm{d}x
   + 2 \sum_{k \neq j} \operatorname{Re} \int \overline{\nabla w_n(t)}  \nabla Q_k(t)  \psi_j(t)  \mathrm{d}x \\
&\quad + \int \Bigl|\sum_{k \neq j} \nabla Q_k(t)\Bigr|^2 \psi_j(t)  \mathrm{d}x .
\end{aligned}
$$
For $k \neq j$, using the assumption \eqref{as} and Lemma \ref{est2} we obtain
$$
\Bigl|\operatorname{Re} \int \overline{\nabla w_n(t)}  \nabla Q_k(t)  \psi_j(t)  \mathrm{d}x\Bigr|
\le \|\nabla w_n(t)\|_{L^2} 
    \|\psi_j(t) \nabla Q_k(t)\|_{L^2}
\lesssim \Upsilon_*^{-1}e^{-\frac{(v_\star (t-\Upsilon_1))^2}{4}} .
$$
For the last term, again by Lemma \ref{est2},
$$
\Bigl\|\sum_{k \neq j} \nabla Q_k(t)\Bigr\|_{L^2(\psi_j(t)\mathrm{d}x)}
\le \sum_{k \neq j} \|\nabla Q_k(t)\|_{L^2(\psi_j(t)\mathrm{d}x)}
\lesssim \Upsilon_*^{-1/2} e^{-\frac{(v_\star (t-\Upsilon_1))^2}{8}} .
$$
Hence
$$
\int |\nabla w_n^j(t)|^2 \psi_j(t)  \mathrm{d}x
\ge \int |\nabla w_n(t)|^2 \psi_j(t)  \mathrm{d}x
   - C \Upsilon_*^{-1} e^{-\frac{(v_\star (t-\Upsilon_1))^2}{4}} .
$$

$\circ$\textit{ Second term (logarithmic nonlinearity):}
For all $\delta\in (0,1)$, there exists $C_\delta>0$ such that for all $y\ge0$,
$$
y^2 \log(1+y) \le C_\delta  y^{2+\delta},
$$
so
$$
\Bigl|\int |w_n^j(t)|^2 \log\!\bigl(1+|w_n^j(t)|\bigr) \psi_j(t)  \mathrm{d}x\Bigr|
\lesssim \|w_n^j(t)\|_{L^{2+\delta}(\psi_j(t)\mathrm{d}x)}^{2+\delta}.
$$
Choose $\delta>0$ such that the Sobolev embedding
$H^1(\mathbb{R}^d) \hookrightarrow L^{2+\delta}(\mathbb{R}^d)$ holds. 
Then, by Lemma \ref{est2},
$$
\begin{aligned}
\|w_n^j(t)\|_{L^{2+\delta}(\psi_j(t)\mathrm{d}x)}
&\le \|w_n(t)\|_{L^{2+\delta}} 
   + \sum_{k \neq j} \|Q_k(t)\|_{L^{2+\delta}(\psi_j(t)\mathrm{d}x)} \\
&\lesssim  \|w_n(t)\|_{H^1} +  \Upsilon_*^{-1} e^{-\frac{(v_\star (t-\Upsilon_1))^2}{8}} \\
&\lesssim  e^{-\frac{(v_\star (t-\Upsilon_1))^2}{8}} .
\end{aligned}
$$
Therefore
$$
\Bigl|\int |w_n^j(t)|^2 \log\!\bigl(1+|w_n^j(t)|\bigr) \psi_j(t)  \mathrm{d}x\Bigr|
\lesssim \Upsilon_*^{-1} e^{-\frac{(v_\star (t-\Upsilon_1))^2}{4}} .
$$

$\circ$\textit{ Third term (power nonlinearity).}
Using the Sobolev embedding $H^1(\mathbb{R}^d) \hookrightarrow L^{2\sigma+2}(\mathbb{R}^d)$,
$$
\|w_n^j(t)\|_{L^{2\sigma+2}(\psi_j(t)\mathrm{d}x)}
\le \|w_n(t)\|_{L^{2\sigma+2}} + \sum_{k \neq j} \|Q_k(t)\|_{L^{2\sigma+2}(\psi_j(t)\mathrm{d}x)}
\lesssim e^{-\frac{(v_\star (t-\Upsilon_1))^2}{8}}.
$$
Consequently,
$$
\Bigl|\int |w_n^j(t)|^{2\sigma+2} \psi_j(t)  \mathrm{d}x\Bigr|
\lesssim \Upsilon_*^{-1}e^{-\frac{(v_\star (t-\Upsilon_1))^2}{4}} .
$$

$\circ$\textit{ Last two terms (mass and momentum).} 
\begin{align*}
    M_j(t,w_n^j(t)) = \|w_n^j(t)\|_{L^2(\psi_j(t)\mathrm{d}x)}^2&\lesssim \norm{w_n(t)}_{L^2(\psi_j(t)\mathrm{d}x)}^2+\sum_{k\neq j}\norm{Q_k(t)}_{L^2(\psi_j(t)\mathrm{d}x)}^2\\&\lesssim \Upsilon_*^{-1} e^{-\frac{(v_\star (t-\Upsilon_1))^2}{4}},
\end{align*}
and
\begin{align*}
    |\mathcal{J}_j(t,w_n^j(t))| &\le \|w_n^j(t)\|_{L^2(\psi_j(t)\mathrm{d}x)} 
                         \|\nabla w_n^j(t)\|_{L^2(\psi_j(t)\mathrm{d}x)}\\
                         &\lesssim \Upsilon_*^{-1/2} e^{-\frac{(v_\star (t-\Upsilon_1))^2}{4}}.
\end{align*}
\textit{Conclusion.}
Gathering all the estimates, we obtain for $\Upsilon_*$ large enough
\begin{equation*}
    H_j(t,w_n^j(t)) \ge \frac12 \int |\nabla w_n(t)|^2 \psi_j(t)  \mathrm{d}x
                   - C \Upsilon_*^{-1/2} e^{-\frac{(v_\star (t-\Upsilon_1))^2}{4}} .\qedhere
\end{equation*}
\end{proof}

Finally to deduce the coercivity for the localized functionals $S_j^{\mathrm{\rm loc}}$ with $j \ge 1$, 
 we only need to show that the last two terms in Lemma \ref{ma} are negligible. For this we want the following lemma.

\begin{lemma}\label{est4}
   Let $\Upsilon_1$ be defined as in \eqref{time}. For all $j \neq k \in \{1, \dots, N\}$, $\ell \ge 1$, and $t\geq \Upsilon_1+\Upsilon_*$, there holds
$$
\begin{aligned}
\int \lvert Q_j(t) \rvert  \lvert Q_k(t) \rvert
\bigl( 1 + \lvert x - x_\ell^*(t) \rvert^2 \bigr)  \mathrm{d}x
+ \int \lvert \nabla Q_j(t) \rvert  \lvert Q_k(t) \rvert  \mathrm{d}x 
&+ \int \lvert \nabla Q_j(t) \rvert  \lvert \nabla Q_k(t) \rvert  \mathrm{d}x
\\&\lesssim \Upsilon_*^{-1} e^{-\frac{ (v_\star (t-\Upsilon_1))^2}{4}}.
\end{aligned}
$$
\end{lemma}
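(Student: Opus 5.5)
We bound the three integrals pointwise by Gaussian profiles, using Lemma~\ref{011} and Corollary~\ref{0111}. Since $|Q_j(t,x)|=\Phi_{\omega_j}(|x-x_j^*(t)|)$ and
\[
\nabla Q_j=\Big(\nabla\Phi_{\omega_j}(x-x_j^*(t))+\tfrac{i}{2}v_j\Phi_{\omega_j}(x-x_j^*(t))\Big)e^{i(\cdots)},
\]
we have, with $r_0$ as in \eqref{r},
\[
|Q_j(t,x)|+|\nabla Q_j(t,x)|\lesssim \bigl(1+|x-x_j^*(t)|\bigr)e^{-\frac{(|x-x_j^*(t)|-r_0)^2}{2}}.
\]
Each of the three integrands is therefore bounded by
\[
P(x)\,e^{-\frac{(|x-x_j^*|-r_0)^2}{2}-\frac{(|x-x_k^*|-r_0)^2}{2}},
\]
where $P$ is a polynomial of degree at most $4$ in $|x-x_j^*|$, $|x-x_k^*|$, $|x-x_\ell^*|$. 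The factor $|x-x_\ell^*|^2$ is handled by the triangle inequality:
\[
|x-x_\ell^*|\le |x-x_j^*|+|x_j^*-x_\ell^*|\lesssim |x-x_j^*|+1+t .
\]

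The key step is an elementary inequality for the Gaussian product. Set $a=|x-x_j^*|-r_0$, $b=|x-x_k^*|-r_0$ and $D=|x_j^*(t)-x_k^*(t)|$. Then $a+b\ge D-2r_0$. By the choice \eqref{time} of $\Upsilon_1$ and $t\ge\Upsilon_1+\Upsilon_*$, we have $D-2r_0\ge v_\star t-\max|x_j-x_k|-2r_0>v_\star(t-\Upsilon_1)+7>0$. Since $a^2+b^2\ge \frac{(a+b)^2}{2}$ whenever $a+b\ge0$, we split the exponent as
\[
\frac{a^2+b^2}{2}=\frac{a^2+b^2}{4}+\frac{a^2+b^2}{4}\ge \frac{a^2+b^2}{4}+\frac{(D-2r_0)^2}{8}.
\]
The first half, $e^{-(a^2+b^2)/4}\le e^{-a^2/4}$, keeps a Gaussian in $x-x_j^*$, which absorbs the polynomial weights when integrating in $x$. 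The $|x-x_k^*|$ factors are first rewritten as $|x-x_j^*|+D$.

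This gives a bound of the form
\[
C(1+t)^{m}\,e^{-\frac{(v_\star(t-\Upsilon_1)+7)^2}{8}}.
\]
It is not yet the target, whose exponent is $\frac{(v_\star(t-\Upsilon_1))^2}{4}$.

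The main obstacle is this factor of two in the exponent, and it is resolved by not splitting symmetrically. On the line segment the constraint $a+b\ge D'$, with $D'=D-2r_0$, gives $\frac{a^2+b^2}{2}\ge \frac{D'^2}{4}$ exactly. So we write
\[
\frac{a^2+b^2}{2}\ge (1-\varepsilon)\frac{D'^2}{4}+\varepsilon\frac{a^2+b^2}{2}
\]
with a small $\varepsilon>0$, or better, we split off only a bounded Gaussian weight. Precisely, the identity
\[
a^2+b^2=\frac{(a+b)^2+(a-b)^2}{2}
\]
gives
\[
\frac{a^2+b^2}{2}\ge\frac{D'^2}{4}+\frac{(a-b)^2}{4}+\frac{(a+b)^2-D'^2}{4}.
\]
Here $(a+b)^2-D'^2\ge D'(a+b-D')\ge0$. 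The terms $\frac{(a-b)^2}{4}$ and $\frac{D'(a+b-D')}{4}$ give integrable decay in the directions transverse to and along the segment from $x_j^*$ to $x_k^*$. Integrating over these directions yields only a polynomial factor in $t$, together with a factor $D'^{-1}$ from the longitudinal direction.

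Finally, $D'\ge v_\star(t-\Upsilon_1)+7$, so
\[
\frac{D'^2}{4}\ge \frac{(v_\star(t-\Upsilon_1))^2}{4}+\frac{7v_\star(t-\Upsilon_1)}{2}.
\]
The extra factor $e^{-\frac{7}{2}v_\star(t-\Upsilon_1)}$ absorbs every polynomial factor $(1+t)^m$. Since $t-\Upsilon_1\ge\Upsilon_*$, the leftover is at most $\Upsilon_*^{-1}$ once $\Upsilon_*$ is large, which gives the stated bound. The bookkeeping of the polynomial weights, including $|x-x_\ell^*|^2$ and the factors $\frac{1}{2}|v_j|$ coming from the gradients, is routine once this sharp exponent bound is in place.
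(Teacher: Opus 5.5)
Your argument is correct, and it reaches the sharp exponent through a different key inequality than the paper's. The paper recentres at the midpoint, with $y=x-\frac{x_j^*(t)+x_k^*(t)}{2}$ and $z=\frac{x_j^*(t)-x_k^*(t)}{2}$. It combines the vector parallelogram law $|y+z|^2+|y-z|^2=2|y|^2+2|z|^2$ with $|y+z|+|y-z|\le 2(|y|+|z|)$ to get
\[
\bigl(|y+z|-r_0\bigr)^2+\bigl(|y-z|-r_0\bigr)^2\ge 2\Bigl((|y|-r_0)^2+(|z|-r_0)^2-r_0^2\Bigr).
\]
So each integrand is a polynomial times $e^{-(|z|-r_0)^2}e^{-(|y|-r_0)^2}$. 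This is the sharp interaction factor $e^{-D'^2/4}$ times a genuine Gaussian in $y$, against which the polynomial weights integrate at once. The surplus $e^{-3v_\star(t-\Upsilon_1)}$, coming from $D'>v_\star(t-\Upsilon_1)+6$, then absorbs the $(1+t^2)$ prefactor.

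You instead apply the scalar identity $a^2+b^2=\frac{(a+b)^2+(a-b)^2}{2}$ to the radial distances and extract $D'^2/4$ from the triangle inequality $a+b\ge D'$. This also explains neatly why the symmetric split loses a factor two. The price is that the weight you keep, $e^{-D'(a+b-D')/4}$, only confines $x$ to an ellipsoidal neighbourhood of the whole segment $[x_j^*,x_k^*]$, not to a unit ball. So there is one integration step that you assert rather than prove.

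That step is routine. The sets $\{|x-x_j^*|+|x-x_k^*|\le D+S\}$ are ellipsoids of volume $\lesssim (D+S)^d$, on which $|x-x_j^*|,|x-x_k^*|\le D+S$. Splitting into the shells $\{S\le a+b-D'<S+1\}$, $S\in\mathbb{N}$, and using $D'\ge 7$, you get $\int P\,e^{-D'(a+b-D')/4}\,\mathrm{d}x\lesssim(1+t)^{4+d}$. The $(a-b)^2$ term is not even needed here. Your surplus $e^{-\frac72 v_\star(t-\Upsilon_1)}$ absorbs this factor. The paper's recentring buys exactly the avoidance of this geometric step.

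A few small slips, none affecting the conclusion:
\begin{itemize}
\item You have the directions swapped: $(a-b)^2$ gives decay along the segment, and $D'(a+b-D')$ gives decay transverse to it.
\item No factor $D'^{-1}$ appears. Near the midpoint the weight is comparable to $e^{-\xi^2-D'|\eta|^2/(2D)}$, with $\xi$ longitudinal and $\eta$ transverse, and this integrates to $O(1)$.
\item Replacing each $r_0^{(j)}$ by the common $r_0$ costs a constant, via $(\rho-r_0^{(j)})^2\ge(\rho-r_0)^2-r_0^2$. You use this silently.
\end{itemize}
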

\begin{proof}
Note that for all  $j\in \{1,...,N\}$, we have $$\Big(|x - x_j^*(t)|-r_0^{(j)}\Big)^2=|x - x_j^*(t)|^2-2|x - x_j^*(t)|r_0^{(j)}+\big(r_0^{(j)}\big)^2\geq(|x - x_j^*(t)|-r_0)^2 -r_0^2+\big(r_0^{(j)}\big)^2.$$
We now estimate the different quantities appearing in the lemma using the Gaussian decay of $Q_j$ and $Q_k$.
$\circ$ \textit{The first term:} 
\begin{align*}
&\int |Q_j(t)|\, |Q_k(t)| \bigl( 1 + |x - x_\ell^*(t)|^2 \bigr)\, \mathrm{d}x \\
&\lesssim \int \bigl( 1 + |x - x_\ell^*(t)|^2 \bigr)
e^{-\frac{(|x - x_j^*(t)|-r_0)^2}{2}}
e^{-\frac{(|x - x_k^*(t)|-r_0)^2}{2}} \,\mathrm{d}x \\
&\lesssim \int \Bigg( 1 + \Big|y - x_\ell^*(t) + \frac{x_j^*(t) + x_k^*(t)}{2}\Big|^2 \Bigg) \\
&\quad \times
\exp\Bigg(
-\frac{1}{2}\Big(|y+\tfrac{x_j^*(t)-x_k^*(t)}{2}|-r_0\Big)^2
-\frac{1}{2}\Big(|y-\tfrac{x_j^*(t)-x_k^*(t)}{2}|-r_0\Big)^2
\Bigg)\,\mathrm{d}y
\end{align*}On the other hand \begin{align*}
    \Bigg(\abs{y+\frac{x_j^*(t)-x_k^*(t)}{2}}-r_0\Bigg)^2 &+\Bigg(\abs{y- \frac{x_j^*(t) - x_k^*(t)}{2}}-r_0\Bigg)^2\\&\geq 2\Bigg(|y|^2+\abs{\frac{x_j^*(t) - x_k^*(t)}{2}}^2-2r_0\bigg(|y|+\abs{\frac{x_j^*(t) - x_k^*(t)}{2}}\bigg)+r_0^2\Bigg)\\&=2\Bigg(\bigg(\abs{\frac{x_j^*(t) - x_k^*(t)}{2}}-r_0\bigg)^2+(|y|-r_0)^2-r_0^2\Bigg).
\end{align*}
Then, \begin{align*}
    \int &\lvert Q_j(t) \rvert  \lvert Q_k(t) \rvert
\bigl( 1 + \lvert x - x_\ell^*(t) \rvert^2 \bigr)  \mathrm{d}x\\&\lesssim \int \Bigg( 1 + \abs{y - x_\ell^*(t) + \frac{x_j^*(t) + x_k^*(t)}{2}}^2 \Bigg) 
\exp\Bigg(-\Big(|y|-r_0\Big)^2 -\Bigg(\abs{ \frac{x_j^*(t) - x_k^*(t)}{2}}-r_0\Bigg)^2\Bigg)  \mathrm{d}y
\\&\lesssim \exp\Bigg( -\Bigg(\frac{|x_j^*(t) - x_k^*(t)|}{2}-r_0\Bigg)^2 \Bigg)
\int \Bigl( |y|^2 + \Bigl| x_\ell^*(t) - \frac{x_j^*(t) + x_k^*(t)}{2} \Bigr|^2 + 1 \Bigr) e^{-(|y|-r_0)^2}  \mathrm{d}y\\
&\lesssim \Bigg( 1 + \abs{ x_\ell^*(t) - \frac{x_j^*(t) + x_k^*(t)}{2} }^2 \Bigg) 
\exp\Bigg( -\Bigg(\frac{|x_j^*(t) - x_k^*(t)|}{2}-r_0\Bigg)^2 \Bigg)\\
&\lesssim \Big( 1 + t^2 \Big) 
\exp\Bigg( -\Bigg(\frac{|x_j^*(t) - x_k^*(t)|}{2}-r_0\Bigg)^2 \Bigg)\\
&\lesssim \Upsilon_*^{-1} \exp\Big( -\frac{(v_\star (t - \Upsilon_1))^2}{4} \Big).
\end{align*}
In the last inequality, we use that
$$
|x_j^*(t) - x_k^*(t)|-2r_0
\geq v_\star t - |x_j - x_k|-r_0
= v_\star (t - \Upsilon_1) + v_\star \Upsilon_1 - |x_j - x_k|-2r_0
> v_\star (t - \Upsilon_1) + 6,
$$
and we take $\Upsilon_*$ sufficiently large.

$\circ$ \textit{The second term:} with a similar computation as the previous case,
\begin{align*}
     \int &\lvert \nabla Q_j(t) \rvert  \lvert Q_k(t) \rvert  \mathrm{d}x \\
     &\lesssim\int \abs{x-x_j^*(t)}e^{-\frac{(|x - x_j^*(t)|-r_0)^2}{2}}  e^{-\frac{(|x - x_k^*(t)|-r_0)^2}{2}}  \mathrm{d}x\\
     &\lesssim \int \left| y + \frac{x_k^*(t) - x_j^*(t)}{2} \right|\exp\Bigg(-\Big(|y|-r_0\Big)^2 -\Bigg(\abs{ \frac{x_j^*(t) - x_k^*(t)}{2}}-r_0\Bigg)^2\Bigg)  \mathrm{d}y\\
     &\lesssim \exp\Bigg( -\Bigg(\frac{|x_j^*(t) - x_k^*(t)|}{2}-r_0\Bigg)^2 \Bigg) \int (\abs{y}+|x_j^*(t)-x_k^*(t)|)e^{-(\abs{y}-r_0)^2}\mathrm{d}y\\
     &\lesssim \Big(1+\abs{x_j^*(t)-x_k^*(t)}\Big)\exp\Bigg( -\Bigg(\frac{|x_j^*(t) - x_k^*(t)|}{2}-r_0\Bigg)^2 \Bigg)\\
     &\lesssim \Upsilon_*^{-1}  \exp\Big( -\frac{(v_\star (t - \Upsilon_1))^2}{4} \Big).
\end{align*}

$\circ$ \textit{The last term:} with a similar computation as the first case,
\begin{align*}
&\int \left| \nabla Q_j(t) \right|  \left| \nabla Q_k(t) \right|  \mathrm{d}x 
\\&\lesssim \int |x - x_j^*(t)|  |x - x_k^*(t)|  
e^{-\frac{(|x - x_j^*(t)|-r_0)^2}{2}}  e^{-\frac{(|x - x_k^*(t)|-r_0)^2}{2}}  \mathrm{d}x \\
&\lesssim \int \left| y + \frac{x_k^*(t) - x_j^*(t)}{2} \right| 
\left| y - \frac{x_k^*(t) - x_j^*(t)}{2} \right| 
\exp\Bigg(-\Big(|y|-r_0\Big)^2 -\Bigg(\abs{ \frac{x_j^*(t) - x_k^*(t)}{2}}-r_0\Bigg)^2\Bigg)  \mathrm{d}y \\
&\lesssim \exp\Bigg( -\Bigg(\frac{|x_j^*(t) - x_k^*(t)|}{2}-r_0\Bigg)^2 \Bigg) 
\int \big( 1 + |y|^2 + |x_j^*(t) - x_k^*(t)|^2 \big)  e^{-(|y|-r_0)^2}  \mathrm{d}y \\
&\lesssim \exp\Big( -\Big(\frac{|x_j^*(t) - x_k^*(t)|}{2}-r_0\Big)^2 \Big) \big( 1 + |x_j^*(t) - x_k^*(t)|^2 \big) \\
&\lesssim \Upsilon_*^{-1}  \exp\Big( -\frac{(v_\star (t - \Upsilon_1))^2}{4} \Big).\qedhere
\end{align*}
\end{proof}
 \begin{lemma}
     For all $n \in \mathbb{N}$, $t \in [\Upsilon^\dagger, T_n]$, and $j \in \{1, \dots, N\}$, we have
     \begin{align*}
         &\biggl| \int \mathrm{Re}\bigl( \nabla Q_j(t)  w_n^j(t) \bigr)  \nabla \psi_j(t)  \mathrm{d}x \biggr|\lesssim \Upsilon_*^{-1} e^{-\frac{(v_\star (t-\Upsilon_1))^2}{4}},  \quad\int \abs{w_n^j(t)}^2\psi_j(t)\mathrm{d}x\lesssim \Upsilon_*^{-1}e^{\frac{(v_\star (t-\Upsilon_1))^2}{4}},\\&\qquad \hspace{3cm} \biggl| \mathrm{Im} \int Q_j(t)  w_n^j(t)  \nabla \psi_j(t)  \mathrm{d}x \biggr|  \lesssim   \Upsilon_*^{-1} e^{-\frac{(v_\star (t-\Upsilon_1))^2}{4}}.
     \end{align*}
 \end{lemma}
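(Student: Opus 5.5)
The plan is to decompose $w_n^j = w_n + \sum_{k\neq j} Q_k$ in each of the three quantities. Each term is then bounded by the bootstrap assumption \eqref{as} together with the improved $L^2$ bound of Lemma \ref{bot1}, or by the localization estimates of Lemmas \ref{est1}, \ref{est2} and \ref{est4}. Throughout, $\nabla\psi_j(t)$ is bounded, since $|\phi'|\le 1$, and it is supported in the annulus where $\psi_j$ transitions. On that annulus $|x-x_j^*(t)|\ge \frac{v_\star(t-\Upsilon_1)}{2}+1$, so $Q_j$ and $\nabla Q_j$ are Gaussian-small there.

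\textbf{First and third quantities.} I split each integral into the $w_n$ part and the $Q_k$ parts, $k\neq j$.

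For the $w_n$ part I use Cauchy--Schwarz:
\[
\Big|\int \nabla Q_j\, w_n \nabla\psi_j\Big| \le \|\nabla\psi_j\nabla Q_j\|_{L^2}\|w_n\|_{L^2},
\qquad
\Big|\int Q_j\, w_n \nabla\psi_j\Big| \le \|Q_j\|_{L^2(|\nabla\psi_j|\mathrm{d}x)}\|w_n\|_{L^2}.
\]
Lemma \ref{est1} bounds the first factor by $\Upsilon_*^{-3}e^{-\frac{(v_\star(t-\Upsilon_1))^2}{8}}$, or by $(t-\Upsilon_1)^{-3}e^{-\frac{(v_\star(t-\Upsilon_1))^2}{8}}$ with $t-\Upsilon_1\ge\Upsilon_*$. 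Lemma \ref{bot1} bounds the second factor by $\Upsilon_*^{-1/2}e^{-\frac{(v_\star(t-\Upsilon_1))^2}{8}}$. The product is therefore $\lesssim \Upsilon_*^{-1}e^{-\frac{(v_\star(t-\Upsilon_1))^2}{4}}$, with room to spare.

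For the cross terms, $|\nabla\psi_j|\le 1$ gives
\[
\Big|\int \nabla Q_j\, Q_k\nabla\psi_j\Big|\le\int|\nabla Q_j||Q_k|,
\qquad
\Big|\int Q_j Q_k\nabla\psi_j\Big|\le\int|Q_j||Q_k|.
\]
Both right-hand sides are $\lesssim\Upsilon_*^{-1}e^{-\frac{(v_\star(t-\Upsilon_1))^2}{4}}$ directly by Lemma \ref{est4}.

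\textbf{Second quantity.} I read the stated exponent as a typo: the intended bound is $e^{-\frac{(v_\star(t-\Upsilon_1))^2}{4}}$, and that is what I would prove, since it implies the displayed one a fortiori. I write
\[
\int|w_n^j|^2\psi_j\lesssim \|w_n\|_{L^2}^2+\sum_{k\neq j}\|Q_k\|^2_{L^2(\psi_j\mathrm{d}x)}.
\]
The first term is controlled by Lemma \ref{bot1} and gives $\Upsilon_*^{-1}e^{-\frac{(v_\star(t-\Upsilon_1))^2}{4}}$. Each term in the sum is controlled by Lemma \ref{est2} with the roles of $j$ and $k$ swapped, which is allowed since $j,k\ge 1$ and $k\neq j$. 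That lemma gives $(t-\Upsilon_1)^{-6}e^{-\frac{(v_\star(t-\Upsilon_1))^2}{4}}\le \Upsilon_*^{-6}e^{-\frac{(v_\star(t-\Upsilon_1))^2}{4}}$, because $t\ge\Upsilon^\dagger\ge\Upsilon_0=\Upsilon_1+\Upsilon_*$.

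\textbf{Main obstacle.} The only delicate point is making sure every factor carries the right power of $\Upsilon_*$ and the full Gaussian rate $\frac{(v_\star(t-\Upsilon_1))^2}{4}$. The crude bootstrap bound \eqref{as} alone gives no $\Upsilon_*^{-1}$ gain. So wherever $w_n$ appears only in $L^2$, I must use Lemma \ref{bot1} rather than \eqref{as}. No gradient of $w_n$ ever enters these three quantities, so this always suffices.
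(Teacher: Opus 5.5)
Your proposal is correct and follows essentially the same route as the paper. You split $w_n^j = w_n + \sum_{k\neq j}Q_k$, bound the $w_n$ pieces by Cauchy--Schwarz with Lemma \ref{est1}, the cross terms by Lemma \ref{est4}, and the localized mass by Lemma \ref{est2}, and you are right that the middle bound should carry $e^{-\frac{(v_\star(t-\Upsilon_1))^2}{4}}$. Two small remarks. For the first and third quantities, the factor $\Upsilon_*^{-3}$ from Lemma \ref{est1} already yields the $\Upsilon_*^{-1}$ gain using only \eqref{as}, as the paper does, so Lemma \ref{bot1} is really needed only for the $\|w_n\|_{L^2}^2$ term in the middle quantity. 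For the cross terms, bounding $|\nabla\psi_j|\le 1$ and applying Lemma \ref{est4} directly is tidier than the paper's displayed Cauchy--Schwarz step, which uses the global norm $\|\nabla Q_j\|_{L^2}$.
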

 \begin{proof}
     We observe that
$$
w_n^j = w_n + \sum_{k \ne j} Q_k,
$$
so that
$$
\begin{aligned}
\Bigl| \int \Re \bigl( \nabla Q_j(t)  w_n^j(t) \bigr) \cdot \nabla \psi_j(t)  \mathrm{d}x \Bigr|
&\le \Bigl| \int \Re \bigl( \nabla Q_j(t)  w_n(t) \bigr) \cdot \nabla \psi_j(t)  \mathrm{d}x \Bigr| \\
&\quad + \sum_{k \ne j} \Bigl| \int \Re \bigl( \nabla Q_j(t)  Q_k(t) \bigr) \cdot \nabla \psi_j(t)  \mathrm{d}x \Bigr|.
\end{aligned}
$$
By the assumption \eqref{as} and Lemma \ref{est1}, there holds, for all $t \in [\Upsilon^\dagger, T_n ]$ and take $\Upsilon_*$ large enough,
$$
\Bigl| \int \Re \bigl( \nabla Q_j(t)  w_n(t) \bigr) \cdot \nabla \psi_j(t)  \mathrm{d}x \Bigr|
\le \| w_n(t) \|_{L^2}  \| \nabla Q_j(t) \nabla \psi_j(t) \|_{L^2}
\lesssim \Upsilon_*^{-1} e^{-\frac{(v_\star (t-\Upsilon_1))^2}{4}  }.
$$
Moreover, Lemma \ref{est4} gives
$$
\Bigl| \int \Re \bigl( \nabla Q_j(t)  Q_k(t) \bigr) \cdot \nabla \psi_j(t)  \mathrm{d}x \Bigr|
\le \| \nabla Q_j(t) \|_{L^2}  \| Q_k(t) \nabla \psi_j(t)\|_{L^2}
\lesssim  \Upsilon_*^{-1} e^{-\frac{(v_\star (t-\Upsilon_1))^2}{4} },
$$
which gives the desired bound for the first term. A similar computation applies to the last one. 

As for the second one, for $\Upsilon_*$ large enough,
\begin{equation*}
    \int \abs{w_n^j(t)}^2\psi_j(t)\mathrm{d}x\leq \| w_n(t) \|_{L^2}^2 + \sum_{k \ne j} \| Q_k(t)\psi_j(t) \|_{L^2}^2
\lesssim \Upsilon_*^{-1} e^{-\frac{(v_\star (t-\Upsilon_1))^2}{4}}.\qedhere
\end{equation*}
 \end{proof}

\begin{corollary}\label{bot11}
    For all $t\in [\Upsilon^\dagger,T_n]$ and for each $j = 1, \dots, N$, we have $$S_j^{\text{\rm loc}}(t,u_n(t))-S_j^{\rm loc}(t,Q_j(t))\geq \frac12 \int |\nabla w_n(t)|^2 \psi_j(t)  \mathrm{d}x
                   - C  \Upsilon_*^{-1/2} e^{-\frac{(v_\star (t-\Upsilon_1))^2}{4}} .$$
\end{corollary}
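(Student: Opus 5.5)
This is a direct combination of Lemma~\ref{ma}, the Coercivity lemma, and the preceding lemma bounding the two boundary terms. I would fix $t\in[\Upsilon^\dagger,T_n]$ and $j\in\{1,\dots,N\}$ and work through the following steps.

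First, apply Lemma~\ref{ma}. This gives
\[
S_j^{\rm loc}(t,u_n(t))-S_j^{\rm loc}(t,Q_j(t))\geq H_j(t,w_n^j(t)) - I_1 - I_2,
\]
where
\[
I_1=\int \Re\big(\nabla Q_j(t)\overline{w_n^j(t)}\nabla\psi_j(t)\big)\mathrm{d}x
\quad\text{and}\quad
I_2=\tfrac12 v_j\cdot\Im\int \overline{Q_j(t)}w_n^j(t)\nabla\psi_j(t)\mathrm{d}x .
\]

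Second, use the Coercivity lemma to bound the main term from below:
\[
H_j(t,w_n^j(t))\geq \tfrac12\int|\nabla w_n(t)|^2\psi_j(t)\,\mathrm{d}x - C\Upsilon_*^{-1/2}e^{-\frac{(v_\star(t-\Upsilon_1))^2}{4}}.
\]

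Third, control $I_1$ and $I_2$ with the lemma just before the corollary. It gives $|I_1|\lesssim \Upsilon_*^{-1}e^{-\frac{(v_\star(t-\Upsilon_1))^2}{4}}$. It also gives $|I_2|\lesssim |v_j|\,\Upsilon_*^{-1}e^{-\frac{(v_\star(t-\Upsilon_1))^2}{4}}$, where $|v_j|$ is a fixed constant absorbed into $\lesssim$. The only point to check is a discrepancy in notation: Lemma~\ref{ma} involves $\Re(\nabla Q_j\overline{w_n^j})$ and $\Im(\overline{Q_j}w_n^j)$, while the lemma is stated with $w_n^j$ unconjugated. This does not matter, because its proof only uses the Cauchy--Schwarz bounds
\[
\|w_n\|_{L^2}\,\|\nabla Q_j\nabla\psi_j\|_{L^2}
\quad\text{and}\quad
\|\nabla Q_j\|_{L^2}\,\|Q_k\nabla\psi_j\|_{L^2},
\]
together with Lemma~\ref{est1}, the assumption \eqref{as} and Lemma~\ref{est4}. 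These bounds depend only on absolute values, so conjugation plays no role.

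Finally, since $\Upsilon_*\geq 1$ for $\Upsilon_*$ large, we have $\Upsilon_*^{-1}\leq \Upsilon_*^{-1/2}$. All error terms therefore merge into a single $C\Upsilon_*^{-1/2}e^{-\frac{(v_\star(t-\Upsilon_1))^2}{4}}$, with $C$ independent of $t$ and $n$, which is the claimed inequality. There is no real obstacle. The only care needed is that every constant stays uniform in $n$ and $t$, which holds because each cited estimate is uniform.
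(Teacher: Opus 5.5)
Your proposal is correct and follows the paper's route exactly: the corollary is obtained by combining Lemma~\ref{ma}, the Coercivity lemma and the boundary-term lemma just before it, with $\Upsilon_*^{-1}\le \Upsilon_*^{-1/2}$ absorbed, and your remark that conjugation is harmless because only absolute values enter is right. One small precision in that aside: the Cauchy--Schwarz bound $\|\nabla Q_j\|_{L^2}\|Q_k\nabla\psi_j\|_{L^2}$ (inherited from the paper) only decays like $e^{-\frac{(v_\star(t-\Upsilon_1))^2}{8}}$, so for $k\neq j$ you should instead bound the cross term by $\int|\nabla Q_j||Q_k|\,\mathrm{d}x$, which Lemma~\ref{est4} controls by $\Upsilon_*^{-1}e^{-\frac{(v_\star(t-\Upsilon_1))^2}{4}}$ since $|\nabla\psi_j|\le 1$.
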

To complete the proof, we also need the same property for $j = 0$. Using an approach similar to that in \cite{15}, we can prove the following lemma.

\begin{lemma}\label{bot22}
    For all $t \in [\Upsilon^\dagger, T_n]$,
    $$S_0^{\text{\rm loc}}(t, u_n(t)) \geq \frac{1}{2} \int |\nabla w_n(t)|^2 \psi_0(t)  \mathrm{d}x
                   - C  \Upsilon_*^{-1} e^{-\frac{(v_\star (t-\Upsilon_1))^2}{4}}.$$
\end{lemma}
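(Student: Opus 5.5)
We write $S_0^{\rm loc}$ out explicitly. Since $\omega_0=0$ and $v_0=0$, it reduces to the localized energy:
$$
S_0^{\rm loc}(t,u_n)=E_0(t,u_n)=\frac12\int|\nabla u_n|^2\psi_0\,\mathrm{d}x-\frac12\int|u_n|^2\big(\log|u_n|^2-1\big)\psi_0\,\mathrm{d}x-\int G(|u_n|^2)\psi_0\,\mathrm{d}x .
$$
The plan is to write $u_n=w_n+Q$ on the support of $\psi_0$. There every $Q_k$ is exponentially small by Lemma \ref{est2}, applied with the index $0$ in the role of $k$ and each $j\ge1$. So $u_n$ is essentially $w_n$ in this region, and we show that the non-kinetic terms are all of order $\Upsilon_*^{-1}e^{-\frac{(v_\star(t-\Upsilon_1))^2}{4}}$. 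The sign of the logarithmic term helps us: where $|u_n|\le1$ the integrand $-|u_n|^2(\log|u_n|^2-1)$ is nonnegative, and only the region $|u_n|>1$ costs something.

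\textbf{Kinetic part.} We expand
$$
\int|\nabla u_n|^2\psi_0=\int|\nabla w_n|^2\psi_0+2\Re\sum_k\int\overline{\nabla w_n}\,\nabla Q_k\,\psi_0+\int\Big|\sum_k\nabla Q_k\Big|^2\psi_0 .
$$
The cross terms are bounded using \eqref{as} together with $\|\psi_0\nabla Q_k\|_{L^2}\lesssim\Upsilon_*^{-3}e^{-\frac{(v_\star(t-\Upsilon_1))^2}{8}}$ from Lemma \ref{est2}; this gives a total of $O(\Upsilon_*^{-3}e^{-\frac{(v_\star(t-\Upsilon_1))^2}{4}})$. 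The last term is nonnegative and can simply be dropped. Altogether the kinetic part is at least $\frac12\int|\nabla w_n|^2\psi_0-C\Upsilon_*^{-3}e^{-\frac{(v_\star(t-\Upsilon_1))^2}{4}}$.

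\textbf{Logarithmic and $G$ parts.} The function $y\mapsto y^2(\log y^2-1)$ is negative on $[0,1]$, and on $[1,\infty)$ it is bounded by $C_\delta y^{2+\delta}$. Hence
$$
-\frac12\int|u_n|^2(\log|u_n|^2-1)\psi_0\ge -C\int|u_n|^{2+\delta}\psi_0 .
$$
For $G$, assumptions \ref{A1}--\ref{A2} give $|G(y^2)|\lesssim y^{2+\delta}+y^{2\sigma+2}$. Indeed, $g(0)=0$ together with $sg'(s)\to0$ yields $|g(s)|\lesssim s^{\delta/2}$ near $0$, after shrinking $\delta$ if necessary; this is the delicate point, since only continuity of $g$ at $0$ is given, so we may need to use instead that $|g(s)|\le\epsilon$ for $s$ small. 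Next, $\int|u_n|^p\psi_0\lesssim\|w_n\|_{L^p}^p+\sum_k\|Q_k\|_{L^p(\psi_0\mathrm{d}x)}^p$. Sobolev embedding and \eqref{as} bound the first piece by $e^{-\frac{p(v_\star(t-\Upsilon_1))^2}{8}}$, and for $p>2$ this is $\le\Upsilon_*^{-1}e^{-\frac{(v_\star(t-\Upsilon_1))^2}{4}}$ once $\Upsilon_*$ is large, because of the extra exponential factor. The $Q_k$ pieces are handled by Lemma \ref{est2}, using the Gaussian bound and $\|Q_k\|_{L^\infty}\le C$ for the exponent $2\sigma+2$.

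\textbf{Main obstacle.} The difficulty is the small-amplitude region, where $|g(s)|$ might behave like $s^{\epsilon}$ with $\epsilon$ arbitrarily small. If no power gain is available, we would fall back on an $L^2$-type bound for the $G$ term: $\int|G(|u_n|^2)|\psi_0\le\epsilon\int|u_n|^2\psi_0+C_\epsilon\int|u_n|^{2\sigma+2}\psi_0$. The first piece is absorbed as follows. We keep the positive contribution $-\frac12|u_n|^2(\log|u_n|^2-1)\ge c|u_n|^2$, valid where $|u_n|$ is small, with $c>\epsilon$; since $-\log|u_n|^2\to+\infty$ there, this dominates $\epsilon|u_n|^2$. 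The remaining region $\{|u_n|\ge s_1\}$ has $|u_n|^2\le s_1^{-\delta}|u_n|^{2+\delta}$ and is treated as above. Collecting all terms gives the claim.
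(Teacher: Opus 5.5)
Your argument is correct. The kinetic part and the treatment of the large-amplitude logarithmic region are the same as in the paper. Dropping the nonnegative term $\int|\sum_k\nabla Q_k|^2\psi_0$, instead of estimating it by Lemma~\ref{est2}, is harmless.

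The difference is in the $G$ term. Discard your first attempt there: $g(0)=0$ and $sg'(s)\to0$ do not give $|g(s)|\lesssim s^{\delta/2}$. For instance, $g(s)=1/\log(1/s)$ near $0$ has $sg'(s)=1/\log^2(1/s)\to0$ but is not $O(s^{\epsilon})$ for any $\epsilon>0$. So your fallback is necessary, not optional.

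The fallback is sound. Take $y_1\le e^{-1}$ with $\max_{[0,y_1]}|g|\le 1$. Then for $y=|u_n|^2\le y_1$ you have pointwise $-\frac12 y(\log y-1)-G(y)\ge y-\frac12 y\ge 0$. For $y\ge y_1$, both pieces are bounded by $C\bigl(y^{1+\delta/2}+y^{\sigma+1}\bigr)$. Hence only superquadratic powers survive. Their $\Upsilon_*^{-1}$ gain comes from the extra factor $e^{-\frac{(p-2)(v_\star\Upsilon_*)^2}{8}}$ produced by \eqref{as} and Sobolev, together with Lemma~\ref{est2} and $\|Q_k\|_{L^\infty}\lesssim1$ for the soliton pieces.

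The paper instead uses the crude bound $|G(y)|\lesssim y+y^{\sigma+1}$ from Lemma~\ref{power}. It controls the quadratic piece $\|u_n\|_{L^2(\psi_0\mathrm{d}x)}^2$ through the improved $L^2$ estimate of Lemma~\ref{bot1}, $\|w_n\|_{L^2}\lesssim\Upsilon_*^{-1/2}e^{-\frac{(v_\star(t-\Upsilon_1))^2}{8}}$, plus Lemma~\ref{est2}. In the paper, the $\Upsilon_*^{-1}$ gain on that piece therefore comes from the time integration in Lemma~\ref{bot1}, not from a higher exponent.

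The paper's route is shorter once Lemma~\ref{bot1} is in hand. Yours is self-contained, needing only \eqref{as} and Lemma~\ref{est2}. It works because the focusing logarithmic term has a favorable sign at small amplitude and dominates any nonlinearity that is merely continuous and vanishing at $0$. It would still close even without an $L^2$ estimate carrying a gain.
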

\begin{proof}
Since $\omega_0=0$ and $v_0=0$, $S_0^{\rm loc}(t,u_n(t))=E_0(t,u_n(t))$ and we recall that its expansion is given by 
\begin{align*}
    S_0^{\mathrm{\rm loc}}(t, u_n(t)) 
&= \frac{1}{2} \int \lvert \nabla u_n(t) \rvert^2  \psi_0(t)  \mathrm{d}x
- \frac{1}{2} \int \lvert u_n(t) \rvert^2 \bigl( \log \lvert u_n(t) \rvert^2 - 1 \bigr)  \psi_0(t)  \mathrm{d}x \\&\quad - \int G(|u_n(t)|^2)\psi_0(t)\mathrm{d}x.
\end{align*}

$\circ$ \textit{The first term:}
$$
\int |\nabla u_n|^2  \psi_0(t)  \mathrm{d}x
= \int |\nabla w_n|^2  \psi_0(t)  \mathrm{d}x
+ 2 \int \mathrm{Re}  (\nabla w_n \cdot \nabla Q)  \psi_0(t)  \mathrm{d}x
+ \int |\nabla Q|^2  \psi_0(t)  \mathrm{d}x.
$$
The last two terms on the right-hand side can be easily estimated. 
First, we have
\begin{align*}
    \left| \int \mathrm{Re}  (\nabla w_n \cdot \nabla Q)  \psi_0(t)  \mathrm{d}x \right|
\le \|\nabla w_n\|_{L^2}  \|\psi_0(t) \nabla Q(t)\|_{L^2} 
&\le \|\nabla w_n\|_{L^2} \sum_{j \ge 1} \|\psi_0(t) \nabla Q_j(t)\|_{L^2} 
\\&\lesssim  \Upsilon_*^{-1} e^{-\frac{(v_\star (t-\Upsilon_1))^2}{4}}.
\end{align*}
Then we use Lemma \ref{est2} to estimate the last term.

$\circ$ \textit{The second term:} For all $\delta > 0$ and all $y > 1$, we have
$$
\log(y) \le \frac{1}{\delta}  y^{\delta}.
$$
 Thus,
 \begin{align*}
     \int |u_n|^2 \bigl(\log |u_n|^2 - 1\bigr)  \psi_0(t)  \mathrm{d}x&=\int |u_n|^2 \log (\frac{|u_n|^2}{e}) \psi_0(t)  \mathrm{d}x
\\&\le \int_{|u_n|^2 > e} |u_n|^2 \log (\frac{|u_n|^2}{e}) \psi_0(t)  \mathrm{d}x
\lesssim  \| u_n(t) \|_{L^{2 + \delta}(\psi_0(t)\mathrm{d}x)}^{2 + \delta}.
 \end{align*}
Then we choose $0<\delta<1$ such that the Sobolev embedding
$H^1(\mathbb{R}^d)\hookrightarrow L^{2+\delta}(\mathbb{R}^d)$ holds.
Then
$$
\begin{aligned}
\| u_n(t) \|_{L^{2 + \delta}(\psi_0(t) \mathrm{d}x)}
\le \| w_n(t) \|_{L^{2 + \delta}} 
+ \sum_{j \ge 1} \| Q_j(t) \|_{L^{2 + \delta}(\psi_0(t) \mathrm{d}x)} &\lesssim \| w_n(t) \|_{H^1}
+ \Upsilon_*^{-1}e^{-\frac{(v_\star (t-\Upsilon_1))^2}{8}} \\&\lesssim e^{-\frac{(v_\star (t-\Upsilon_1))^2}{8}} .
\end{aligned}
$$
In the second inequality, we have used Lemma~\ref{est2}.

$\circ$ \textit{The third term:} Thanks to Lemma \ref{power} and the Sobolev embedding $H^1(\mathbb{R}^d)\hookrightarrow L^{2\sigma+2}(\mathbb{R}^d)$, and using the same computation as for the second term, we obtain
\begin{align*}
\int |G(|u_n(t)|^2)|\psi_0(t) \mathrm{d}x
&\lesssim \norm{u_n(t)}_{L^2(\psi_0(t)dx)}^2+\| u_n(t) \|_{L^{2\sigma+2}(\psi_0(t) \mathrm{d}x)}^{2\sigma+2} \lesssim \Upsilon_*^{-1}e^{-\frac{(v_\star (t-\Upsilon_1))^2}{4}} .
\end{align*}
Indeed $$\norm{u_n(t)}_{L^2(\psi_0(t)dx)}\lesssim \norm{w_n(t)}_{L^2}+\sum_{j\geq 1}\norm{Q_j(t)}_{L^{2}(\psi_0(t)\mathrm{d}x)}\lesssim \Upsilon_*^{-1/2} e^{-\frac{(v_\star (t-\Upsilon_1))^2}{8}},$$ and $$\norm{u_n(t)}_{L^{2\sigma+2}(\psi_0(t)dx)}\lesssim \norm{w_n(t)}_{L^{2\sigma+2}}+\sum_{j\geq 1}\norm{Q_j(t)}_{L^{2\sigma+2}(\psi_0(t)\mathrm{d}x)}\lesssim  e^{-\frac{(v_\star (t-\Upsilon_1))^2}{8}}.$$
\end{proof}
Now return to estimate the third term as in \eqref{gui}
 \begin{lemma}\label{bot33}
   Let $\Upsilon_1$ be defined as in \eqref{time}. For all $j\in \{1,...,N\}$, for all $t\geq \Upsilon_1+\Upsilon_*$ and $\Upsilon_*$ large enough, $$\abs{ S_j(Q_j) - S_j^{\rm loc}(t,Q_j(t))}\lesssim \Upsilon_*^{-1/2}e^{-\frac{ (v_\star (t-\Upsilon_1))^2}{8}}.$$
\end{lemma}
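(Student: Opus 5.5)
The difference $S_j(Q_j) - S_j^{\rm loc}(t,Q_j(t))$ is an integral of the energy, mass and momentum densities of $Q_j(t)$ against the weight $1-\psi_j(t)$. Here we use that $S_j$ is the global action, i.e.\ the same density integrated against $1$; by translation and Galilean invariance $S_j(Q_j(t))$ does not depend on $t$. Explicitly,
\begin{align*}
S_j(Q_j) - S_j^{\rm loc}(t,Q_j(t)) &= \frac12\int |\nabla Q_j|^2(1-\psi_j)\,\mathrm{d}x - \frac12\int |Q_j|^2\big(\log|Q_j|^2-1\big)(1-\psi_j)\,\mathrm{d}x \\
&\quad - \int G(|Q_j|^2)(1-\psi_j)\,\mathrm{d}x + \Big(\omega_j+\frac{|v_j|^2}{4}\Big)\frac12\int |Q_j|^2(1-\psi_j)\,\mathrm{d}x \\
&\quad + \frac12 v_j\cdot \Im\int \overline{Q_j}\nabla Q_j(1-\psi_j)\,\mathrm{d}x .
\end{align*}
The plan is to bound each of these five integrals by the weighted norms of $Q_j$ outside its concentration region, as provided by Lemma~\ref{est1} and Lemma~\ref{est2}.

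For the kinetic and mass terms, $0\le 1-\psi_j\le 1$ gives $\int|\nabla Q_j|^2(1-\psi_j)\le \|\nabla Q_j\|^2_{L^2((1-\psi_j)\mathrm{d}x)}$ and similarly for $|Q_j|^2$. Lemma~\ref{est1} bounds these by $\Upsilon_*^{-6}e^{-(v_\star(t-\Upsilon_1))^2/4}$ and $(t-\Upsilon_1)^{-6}e^{-(v_\star(t-\Upsilon_1))^2/4}$ respectively. Since $t-\Upsilon_1\ge\Upsilon_*$, both are far better than the claimed bound. For the momentum term, Cauchy--Schwarz with weight $1-\psi_j$ gives the product $\|Q_j\|_{L^2((1-\psi_j)\mathrm{d}x)}\|\nabla Q_j\|_{L^2((1-\psi_j)\mathrm{d}x)}$, which is again of order $e^{-(v_\star(t-\Upsilon_1))^2/4}$.

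For the power term, assumptions \ref{A1}--\ref{A2} give $|G(s)|\lesssim s+s^{\sigma+1}$. Together with $\|Q_j\|_{L^\infty}\lesssim1$ this yields $|G(|Q_j|^2)|\lesssim |Q_j|^2$, so this term is controlled by the mass term.

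The main obstacle is the logarithmic term, because $|Q_j|^2\log|Q_j|^2$ is not controlled by $|Q_j|^2$ near the zero set of $Q_j$. I would write $|Q_j|^2|\log|Q_j|^2| = |Q_j|\cdot|Q_j\log|Q_j|^2|$ and apply Cauchy--Schwarz with weight $1-\psi_j$. This gives
\[
\|Q_j\|_{L^2((1-\psi_j)\mathrm{d}x)}\,\big\|Q_j\log|Q_j|^2\big\|_{L^2((1-\psi_j)\mathrm{d}x)} \lesssim e^{-(v_\star(t-\Upsilon_1))^2/4},
\]
where the second factor is bounded by the last estimate of Lemma~\ref{est2}.

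If one prefers to avoid that estimate, the alternative is to use the two-sided bounds of Lemma~\ref{011}. The lower bound $|Q_j|\ge C_1e^{-\delta|x-x_j^*|^2}$ gives $|\log|Q_j|^2|\lesssim 1+|x-x_j^*(t)|^2$, which reduces the term to $\|Q_j|x-x_j^*|^2\|_{L^2((1-\psi_j)\mathrm{d}x)}\|Q_j\|_{L^2((1-\psi_j)\mathrm{d}x)}$. Both factors are estimated in Lemma~\ref{est1}.

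Summing the five contributions gives a bound of order $e^{-(v_\star(t-\Upsilon_1))^2/4}$, at least as good as the claimed $\Upsilon_*^{-1/2}e^{-(v_\star(t-\Upsilon_1))^2/8}$ for $\Upsilon_*$ large.
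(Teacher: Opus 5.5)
Your proposal is correct and follows essentially the same route as the paper. You write the difference as the energy, mass and momentum densities of $Q_j$ integrated against $1-\psi_j$, then bound each piece with Lemma~\ref{est1}, Lemma~\ref{est2} (using the $Q_j\log|Q_j|^2$ estimate, via Cauchy--Schwarz, for the logarithmic term) and the bound $|G(|Q_j|^2)|\lesssim |Q_j|^2+|Q_j|^{2\sigma+2}$ from Lemma~\ref{power}. Your write-up is in fact more careful than the paper's one-line argument: your displayed identity correctly has $|\nabla Q_j|^2$ and the factors $\frac12$, where the paper's version has small typos.
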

\begin{proof}
With a simple computation,
    \begin{align*}
        S_j(Q_j) - S_j^{\rm loc}(t,Q_j(t))&=\frac12\int\abs{Q_j(t)}^2(1-\psi_j(t))\mathrm{d}x-\frac{1}{2}\int\abs{Q_j(t)}^2\big(\log\abs{Q_j(t)}^2-1\big)(1-\psi_j(t))\mathrm{d}x\\&\qquad-\int G(|Q_j(t)|^2)(1-\psi_j(t))\mathrm{d}x+\bigg(\omega_j+\frac{\abs{v_j}^2}{4}\bigg)\int\abs{Q_j(t)}^2(1-\psi_j(t))\mathrm{d}x\\&\qquad+v_j\cdot\Im\int\overline{Q_j(t)}\nabla Q_j(t)(1-\psi_j(t))\mathrm{d}x.
    \end{align*}
   Combining Lemma \ref{est1} and Lemma \ref{est2} with the last estimate in Lemma \ref{power}, we obtain the desired conclusion. 
\end{proof}
\begin{proof}[Proof of Proposition \ref{bot2}]
   We conclude the proof using the decomposition in \eqref{gui}. Each term is then estimated by applying Corollary \ref{bot11}, Lemma \ref{bot22}, and Lemma \ref{bot33}.
\end{proof}
\subsubsection{Slow variational}
Since the functional $S^{\mathrm{\rm loc}}$ is defined via a time-dependent partition of unity, it is not conserved along the flow, unlike the total energy. Therefore, it is necessary to control its time evolution in order to compare $S^{\mathrm{\rm loc}}(t,u_n(t))$ with the sum of the individual actions $S(Q_j)$. 

The following proposition ensures that this variation remains uniformly small over the entire time interval under consideration, provided that the solution stays close to the multi-soliton profile. This control is a crucial ingredient for propagating the coercivity properties established previously, and for preventing any significant transfer of mass or energy through the transition regions, thereby allowing the bootstrap argument to be closed.
\begin{proposition}(Slow variations)\label{slow}
    For all $t\in [\Upsilon^\dagger,T_n]$, $$S^{\text{\rm loc}}(t,u_n(t))-\sum_{j\geq 1}S_j(Q_j)\lesssim \Upsilon_*^{-1}e^{-\frac{(v_\star (t-\Upsilon_1))^2}{4}}.$$
\end{proposition}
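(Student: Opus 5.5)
Since $u_n(T_n)=Q(T_n)$, the natural plan is to integrate the time derivative of $S^{\rm loc}(t,u_n(t))$ backwards from $T_n$. We write
\[
S^{\rm loc}(t,u_n(t))-\sum_{j\ge1}S_j(Q_j)=\Big(S^{\rm loc}(T_n,Q(T_n))-\sum_{j\ge1}S_j(Q_j)\Big)-\int_t^{T_n}\frac{\mathrm{d}}{\mathrm{d}s}S^{\rm loc}(s,u_n(s))\,\mathrm{d}s .
\]
The first bracket is handled by the decomposition \eqref{gui} applied at $u_n=Q$, so that $w_n=0$. The term $S_0^{\rm loc}(T_n,Q)$ is bounded by the computation of Lemma~\ref{bot22}. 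Each difference $S_j^{\rm loc}(T_n,Q)-S_j^{\rm loc}(T_n,Q_j)$ involves only cross terms $Q_kQ_j$ and $Q_k$ on $\operatorname{supp}\psi_j$, which Lemmas~\ref{est2} and~\ref{est4} make $\lesssim\Upsilon_*^{-1}e^{-(v_\star(T_n-\Upsilon_1))^2/4}$. The remaining pieces $S_j(Q_j)-S_j^{\rm loc}(T_n,Q_j)$ need the sharper form of Lemma~\ref{bot33}. Using Lemma~\ref{est1} with the quadratic quantities $\|Q_j\|^2_{L^2((1-\psi_j))}$ and so on, one gets $e^{-(v_\star(T_n-\Upsilon_1))^2/4}$ times a power of $\Upsilon_*^{-1}$. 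All these are at time $T_n\ge t$, hence dominated by the target bound.

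The core step is computing $\frac{\mathrm{d}}{\mathrm{d}s}S_j^{\rm loc}(s,u_n(s))$. I would use the standard localized conservation-law identities, justified by regularization exactly as in \cite{15}, since $u_n\in W_1$ only. These are
\[
\tfrac{\mathrm d}{\mathrm ds}M_j=\Im\!\int \bar u\nabla u\cdot\nabla\psi_j+\tfrac12\!\int|u|^2\partial_s\psi_j ,
\]
together with the analogous formulas for $\mathcal J_j$ and $E_j$. These involve $\nabla\psi_j$, $\partial_s\psi_j$ and $\Delta\nabla\psi_j$, integrated against quadratic expressions in $u$, $\nabla u$ and the nonlinear densities $|u|^2\log|u|^2$ and $G(|u|^2)$. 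The key point is that the main terms telescope. On $\operatorname{supp}\nabla\psi_j$, where $\partial_s\psi_j=-(v_j\cdot\frac{x-x_j^*}{|x-x_j^*|}+\frac{v_\star}{2})\phi'$, the combination $\omega_jM_j+v_j\cdot\mathcal J_j+E_j$ yields only integrands supported in the annulus $\{|x-x_j^*(s)|\approx v_\star(s-\Upsilon_1)/2\}$. That annulus lies far from every soliton. There we replace $u_n=w_n+Q$ and note that every $Q_k$ restricted to the annulus is $\lesssim(s-\Upsilon_1)^{-3}e^{-(v_\star(s-\Upsilon_1))^2/8}$ by Lemma~\ref{est1}. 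Hence every term is either quadratic in $w_n$ or contains a tail of $Q_k$, which gives $|\frac{\mathrm d}{\mathrm ds}S^{\rm loc}_j|\lesssim e^{-(v_\star(s-\Upsilon_1))^2/4}$. The nonlinear terms on the annulus are controlled by $|u|^2|\log|u|^2|\lesssim|u|^{2-\delta}+|u|^{2+\delta}$ and by Lemma~\ref{power} for $G$, with \eqref{as} and Sobolev supplying the bounds for $w_n$.

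The main obstacle is this last point. A purely quadratic bound in $w_n$ only gives $\|w_n\|_{H^1}^2\le e^{-(v_\star(s-\Upsilon_1))^2/4}$. The low-power piece $|u|^{2-\delta}$ from the logarithm near zero is worse, since it lacks the full square. To handle the logarithm I would instead use Lemma~\ref{002}-type cancellations, namely the imaginary-part structure of the flux terms, as in \cite{15}. That structure keeps the logarithm from appearing in the mass and momentum fluxes. In the energy flux, $\partial_s\psi_j$ multiplies $F(u)$, and I would control the small-$|u|$ region through the $L^2$ estimate of Lemma~\ref{bot1}, using $|u|^2|\log|u||\lesssim|u|^{2-\delta}$ only where it can be paired with Gaussian tails. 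Finally, integrating $e^{-(v_\star(s-\Upsilon_1))^2/4}$ over $[t,\infty)$ with $t-\Upsilon_1\ge\Upsilon_*$ produces the factor $\Upsilon_*^{-1}$, as in Lemma~\ref{bot1}. This gives the claimed bound $\lesssim\Upsilon_*^{-1}e^{-(v_\star(t-\Upsilon_1))^2/4}$.
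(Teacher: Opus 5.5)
There is a genuine gap in your core step. Because you differentiate each $S_j^{\rm loc}$ separately, you must control the logarithmic part of the \emph{localized energy} flux on the transition annulus $A$. This includes the term $\int F(u_n)\,\partial_s\psi_j$ that you single out. It also includes a term of the form $\int \log|u_n|^2\,\Im(\bar u_n\nabla u_n)\cdot\nabla\psi_j$ coming from $\Re\int\nabla\psi_j\cdot\nabla\bar u_n\,\partial_s u_n$.

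On $A$ every $Q_k$ is negligible and $u_n\approx w_n$. So there is no Gaussian tail to pair the logarithm with, and your proposed remedy has nothing to act on. From \eqref{as} and Lemma~\ref{bot1} alone, Jensen gives at best
$\int_A|w_n|^2\bigl|\log|w_n|^2\bigr|\lesssim \|w_n\|_{L^2}^2\log\bigl(|A|/\|w_n\|_{L^2}^2\bigr)$, which is of order $\Upsilon_*^{-1}(s-\Upsilon_1)^2e^{-(v_\star(s-\Upsilon_1))^2/4}$. This loss is real: for $w_n=\varepsilon f$ the integral is $\approx\varepsilon^2\log(1/\varepsilon^2)\|f\|_{L^2}^2$. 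The route via $|u|^{2-\delta}$ and H\"older is worse, losing a factor $e^{\delta(v_\star(s-\Upsilon_1))^2/8}$. Integrating over $[t,T_n]$ then leaves a factor of order $\Upsilon_*^{-1}(t-\Upsilon_1)$ in front of $e^{-(v_\star(t-\Upsilon_1))^2/4}$. That factor is unbounded in $t$, so the bootstrap of Lemma~\ref{05} does not close.

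The missing idea is that the localized energies never need to be differentiated. Since $(\psi_j)_{0\le j\le N}$ is a partition of unity and $S^{\rm loc}$ includes the $j=0$ piece with $\omega_0=0$, $v_0=0$ (as your use of \eqref{gui} already presupposes), we have $\sum_{j=0}^N E_j(t,u)=E(u)$. Hence
\[
S^{\rm loc}(t,u_n(t))=E(u_n(t))+\sum_{j\ge1}\Big(\omega_j+\frac{|v_j|^2}{4}\Big)M_j(t,u_n(t))+\sum_{j\ge1}v_j\cdot\mathcal J_j(t,u_n(t)),
\]
and $E(u_n(t))$ is exactly conserved by Theorem~\ref{100}.

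In $\frac{\mathrm d}{\mathrm dt}M_j$ the logarithm drops out because $\Re\bigl(\bar u\,iu\log|u|^2\bigr)=0$. In $\frac{\mathrm d}{\mathrm dt}\mathcal J_j$ it only produces $\int|u_n|^2\nabla\psi_j$. With Lemma~\ref{der}, everything is bounded by $\int(|\nabla u_n|^2+|u_n|^2+|u_n|^{2\sigma+2})|\nabla\psi_j|+\int|u_n|^2|D^3_{xxx}\psi_j|\lesssim\|w_n\|_{H^1}^2+\|w_n\|_{H^1}^{2\sigma+2}+{}$Gaussian tails. That is $\lesssim e^{-(v_\star(s-\Upsilon_1))^2/4}$ with no polynomial loss, and integration yields the factor $\Upsilon_*^{-1}$. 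This is the paper's argument, and it also avoids having to justify a localized energy identity for $W_1$ solutions.

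The rest of your outline matches the decomposition \eqref{10}. Your remark that the tail terms $S_j(Q_j)-S_j^{\rm loc}(T_n,Q_j(T_n))$ need the quadratic (exponent $/4$) form of Lemma~\ref{bot33} is correct, and sharper than what the paper cites. Note, however, that Lemma~\ref{bot22} gives only a lower bound on $S_0^{\rm loc}$, whereas here you need the two-sided estimate of Lemma~\ref{55}.
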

The proof of this proposition is essentially the same as in \cite{25,22,15,12}. First, we decompose the left-hand side:  
\begin{equation}\label{10}
\begin{aligned}
S^{\mathrm{\rm loc}}(t, u_n(t)) - \sum_{j=0}^N S(Q_j)
&=
\bigl( S^{\mathrm{\rm loc}}(t, u_n(t)) - S^{\mathrm{\rm loc}}(T_n, Q(T_n)) \bigr)
+ S^{\mathrm{\rm loc}}_0(T_n, Q(T_n))\\
&\quad
+ \sum_{j \ge 1} \bigl( S^{\mathrm{\rm loc}}_j(T_n, Q(T_n)) - S^{\mathrm{\rm loc}}_j(T_n, Q_j(T_n)) \bigr)\\
&\quad
+ \sum_{j \ge 1} \bigl( S^{\mathrm{\rm loc}}_j(T_n, Q_j(T_n)) - S_j(Q_j) \bigr).
\end{aligned}
\end{equation}
Then, we estimate each term in the last equality. For the first term, 
remark that $u_n(T_n) = Q(T_n)$. Define
$$
\tilde{S}_n(t) := S^{\text{\rm loc}}\bigl(t, u_n(t)\bigr),
$$
so that
$$
\abs{S^{\mathrm{\rm loc}}(t, u_n(t)) - S^{\mathrm{\rm loc}}(T_n, Q(T_n))}
= \abs{\int_t^{T_n} \frac{d\tilde{S}_n(s)}{\mathrm{d}s}  \mathrm{d}s}.
$$
To estimate the derivative of $\tilde{S}_n(t)$ we first need to establish a link between the time and spatial derivatives of the partition.
\begin{lemma}\cite[Lemma 3.22]{15}\label{der}
$\forall j \ge 1, \ \forall t \ge 0, \ \forall x \in \mathbb{R}^d,$
$$|\partial_t \psi_j(t,x)| \lesssim  |\nabla \psi_j(t,x)|.$$
\end{lemma}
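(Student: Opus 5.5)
The plan is a direct chain-rule computation on the explicit formula for $\psi_j$. Write
$$\psi_j(t,x)=\phi\big(\rho_j(t,x)\big),\qquad \rho_j(t,x):=|x-x_j^*(t)|-\frac{v_\star(t-\Upsilon_1)}{2}-2,$$
with $x_j^*(t)=x_j+v_jt$. Away from the point $x=x_j^*(t)$ the map $\rho_j$ is smooth, and there
$$\nabla\psi_j=\phi'(\rho_j)\,\frac{x-x_j^*(t)}{|x-x_j^*(t)|},\qquad \partial_t\psi_j=\phi'(\rho_j)\Big(-v_j\cdot\frac{x-x_j^*(t)}{|x-x_j^*(t)|}-\frac{v_\star}{2}\Big).$$
Hence $|\nabla\psi_j|=|\phi'(\rho_j)|$. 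Since the bracket has modulus at most $|v_j|+v_\star/2$, we also get
$$|\partial_t\psi_j|\le\Big(\max_k|v_k|+\frac{v_\star}{2}\Big)\,|\phi'(\rho_j)|=C\,|\nabla\psi_j|,$$
and the constant $C$ depends only on the velocities.

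The only point to check is the singular set $x=x_j^*(t)$, where $|x-x_j^*(t)|$ is not differentiable. There are two cases. If $t\ge\Upsilon_1$, then $\rho_j(t,x_j^*(t))\le-2$, so $\rho_j<-1$ on a neighbourhood of $(t,x_j^*(t))$. There $\psi_j\equiv1$, both derivatives vanish, and the inequality is trivial. If $t<\Upsilon_1$ (the statement allows all $t\ge0$), the same argument works whenever $\rho_j(t,x_j^*(t))=\tfrac{v_\star(\Upsilon_1-t)}{2}-2$ avoids $[-1,1]$, because then $\phi'$ vanishes near that point. Otherwise the estimate should be read pointwise at $x\neq x_j^*(t)$, which is a null set exception and harmless in every integral where the lemma is used. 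In the bootstrap only $t\ge\Upsilon_1$ matters, so I would state the result for $t\ge\Upsilon_1$, or almost everywhere in $x$.

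The main (mild) obstacle is this nonsmooth point of the radial cutoff; it is handled by the observation above that $\phi'$ vanishes there for the relevant times. Everything else is a one-line chain rule with the assumption $\phi'\in[-1,0]$ not even needed beyond boundedness.
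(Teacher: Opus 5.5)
Your proof is correct and is the standard argument. The paper gives no proof and only cites \cite[Lemma 3.22]{15}, where the estimate comes from exactly this chain-rule computation, $\partial_t\psi_j=-\big(v_j\cdot\frac{x-x_j^*(t)}{|x-x_j^*(t)|}+\frac{v_\star}{2}\big)\phi'(\rho_j)$ and $|\nabla\psi_j|=|\phi'(\rho_j)|$. Your caveat is also accurate: with this paper's shift by $\Upsilon_1$, $\psi_j$ fails to be differentiable at $x=x_j^*(t)$ for some $t\in[0,\Upsilon_1)$, so the ``$\forall t\ge 0$'' should be read as $t\ge\Upsilon_1$, which is all that is used since $t\in[\Upsilon^\dagger,T_n]$ with $\Upsilon^\dagger\ge\Upsilon_0>\Upsilon_1$.
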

Similarly as in \cite{15}, some modifications are required due to the power-type perturbation; however, these can be handled using the Sobolev embedding and assumption \eqref{as}, yielding.
\begin{lemma}
For all $ t \in [\Upsilon^\dagger, T_n ],$ 
$$
\left| \frac{\rm d}{\mathrm{d}t} \tilde{S}_n(t) \right| \lesssim  e^{ - \frac{ (v_\star (t-\Upsilon_1))^2}{8} }.$$
\end{lemma}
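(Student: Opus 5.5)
We plan to compute $\frac{d}{dt}\tilde S_n(t)$ explicitly. The point is that the global quantities $E$, $M$, $\mathcal J$ are conserved, so only terms where a derivative falls on the cut-off $\psi_j$ survive. Since $\sum_{j=0}^N\psi_j=1$ and $\omega_0=0$, $v_0=0$, we can write
\[
\tilde S_n(t)=E(u_n(t))+\sum_{j=1}^N\Big(\omega_j+\tfrac{|v_j|^2}{4}\Big)M_j(t,u_n(t))+\sum_{j=1}^N v_j\cdot\mathcal J_j(t,u_n(t)).
\]
By Theorem \ref{100}, $E(u_n(t))$ is constant, so it suffices to differentiate the localized masses and momenta.

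\textbf{Step 1: the localized mass.} Using the equation \eqref{030} (first for smooth approximations, then by the standard density/regularization argument in $W_1$), we get
\[
\frac{d}{dt}M_j(t,u_n)=\frac12\int|u_n|^2\partial_t\psi_j\,\mathrm dx+\Im\int \bar u_n\nabla u_n\cdot\nabla\psi_j\,\mathrm dx .
\]
The nonlinear terms $u_n\log|u_n|^2+u_ng(|u_n|^2)$ contribute nothing, because they are real multiples of $u_n$.

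\textbf{Step 2: the localized momentum.} The usual virial-type computation gives
\begin{align*}
\frac{d}{dt}\mathcal J_j(t,u_n)&=\frac12\Im\int\bar u_n\nabla u_n\,\partial_t\psi_j\,\mathrm dx-\Re\int(\nabla\bar u_n\cdot\nabla\psi_j)\nabla u_n\,\mathrm dx+\frac14\int|u_n|^2\nabla\Delta\psi_j\,\mathrm dx\\
&\quad+\frac12\int\big(|u_n|^2+2G(|u_n|^2)-|u_n|^2 g(|u_n|^2)\big)\nabla\psi_j\,\mathrm dx,
\end{align*}
up to harmless constant conventions. Here the logarithmic nonlinearity produces the term $\frac12\int|u_n|^2\nabla\psi_j$, exactly as in \cite{15}. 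The new term $2G(|u_n|^2)-|u_n|^2g(|u_n|^2)$ is the power-type correction.

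\textbf{Step 3: bounding the integrands.} Every integrand is supported where $\nabla\psi_j\neq0$, i.e. in the annulus around $x_j^*(t)$ of radius $\frac{v_\star(t-\Upsilon_1)}{2}+O(1)$. By Lemma \ref{der}, the $\partial_t\psi_j$ terms are also controlled by $|\nabla\psi_j|$. Moreover $\nabla\psi_j$ and $D^3\psi_j$ are bounded, and their supports lie inside $\{\psi_k=0 \text{ for all } k\neq j\}\cup\operatorname{supp}\psi_0$-type regions where every $Q_k$ is Gaussian-small. We write $u_n=w_n+Q$ on this annulus and bound each quadratic term as follows:
\begin{itemize}[label=$\circ$]
\item terms of type $|w_n|^2$ or $|\nabla w_n|^2$ by \eqref{as}, giving $e^{-\frac{(v_\star(t-\Upsilon_1))^2}{4}}$;
\item cross terms by $\|w_n\|_{H^1}$ times $\|Q_k\|_{L^2(|\nabla\psi_j|dx)}+\|\nabla\psi_j\nabla Q_k\|_{L^2}$, using Lemmas \ref{est1}--\ref{est2}, giving $e^{-\frac{(v_\star(t-\Upsilon_1))^2}{4}}$;
\item pure $Q$ terms, which are at most $e^{-\frac{(v_\star(t-\Upsilon_1))^2}{8}}$ again by Lemmas \ref{est1}--\ref{est2}.
\end{itemize}
These pure $Q$ terms fix the final rate $e^{-(v_\star(t-\Upsilon_1))^2/8}$.

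\textbf{Step 4: the power correction (main obstacle).} We need to control $\int|2G(|u_n|^2)-|u_n|^2g(|u_n|^2)|\,|\nabla\psi_j|$. By \ref{A1}--\ref{A2} (and Lemma \ref{power}),
\[
|G(s^2)|+s^2|g(s^2)|\lesssim s^2+s^{2\sigma+2}.
\]
So this term is bounded by $\|u_n\|^2_{L^2(|\nabla\psi_j|dx)}+\|u_n\|^{2\sigma+2}_{L^{2\sigma+2}(|\nabla\psi_j|dx)}$. We split $u_n=w_n+Q$ and use the Sobolev embedding $H^1\hookrightarrow L^{2\sigma+2}$ with \eqref{as} for $w_n$. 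For the $Q_k$, we use the $L^{2+\delta}$ and $L^\infty$ bounds of Lemmas \ref{est1}--\ref{est2}, interpolating since $Q$ is bounded in $L^\infty$. Both pieces are at most $e^{-(v_\star(t-\Upsilon_1))^2/8}$.

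The delicate point is justifying the differentiation for $W_1$ solutions, where the logarithm is non-smooth. For this we rely on the regularization argument of \cite{15} combined with the local theory of \cite{ben2026}. Once that is in place, summing over $j$ gives the claim.
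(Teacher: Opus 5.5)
Your proposal is correct and follows essentially the same route as the paper. Energy conservation reduces everything to the localized masses and momenta, the $G$ and $|u_n|^2g(|u_n|^2)$ flux terms are bounded by $|u_n|^2+|u_n|^{2\sigma+2}$ against $|\nabla\psi_j|$, $\partial_t\psi_j$ is handled by Lemma~\ref{der}, and the splitting $u_n=w_n+Q$ together with \eqref{as}, Sobolev embedding and Lemmas~\ref{est1}--\ref{est2} closes the estimate. One small correction: the pure-$Q$ terms are quadratic in $Q$ on $\operatorname{supp}\nabla\psi_j$, so they are in fact $O(e^{-(v_\star(t-\Upsilon_1))^2/4})$ and do not ``fix'' the rate at $1/8$; this faster rate is what the paper's proof actually reaches, and it is the one needed when the bound is integrated in Corollary~\ref{44}.
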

\begin{proof}
First, remark that \begin{align*}
    \tilde{S}_n(t)&=\sum_{j=0}^N \bigl(E_j(u_n(t))+(\omega_j+\frac{\abs{v_j}^2}{4})M_j(t,u_n(t))+v_j\cdot\mathcal{J}_j(t,u_n(t))\bigr)\\&=E(u_n(t))+\sum_{j=1}^N (\omega_j+\frac{\abs{v_j}^2}{4})M_j(t,u_n(t))+\sum_{j=1}^N v_j\cdot\mathcal{J}_j(t,u_n(t)).
\end{align*}
Since the energy $E(u(t))$ is conserved along the flow of \eqref{NLS}, to estimate the variations of $S^{\text{\rm loc}}(t, u(t))$ it suffices to study the variations of the localized masses $M_j(t, u(t))$ and momenta $\mathcal{J}_j(t, u(t))$.  \\
Take any $j = 1, \dots, N$. We have
$$
\frac{\rm d}{\mathrm{d}t} M_j(t,u_n(t))
=
\int \mathrm{Im}(\Delta u_n  \overline{u_n})  \psi_j  \mathrm{d}x
+ \int |u_n|^2  \partial_t \psi_j  \mathrm{d}x
=
\int \mathrm{Im}(\nabla u_n  \overline{u_n}) \cdot \nabla \psi_j  \mathrm{d}x
+ \int |u_n|^2  \partial_t \psi_j  \mathrm{d}x.
$$
$$
\left| \frac{\rm d}{\mathrm{d}t} M_j(t,u_n(t)) \right|
\lesssim \int \left( |\nabla u_n|^2 + |u_n|^2 \right) |\nabla \psi_j|  \mathrm{d}x.
$$
For $\mathcal{J}_j(t, u_n(t))$, we compute the derivative of the $k$-th component. Let $k\in \{1,...,d\}$. Then
\begin{align*}
\frac{\rm d}{\mathrm{d}t} \int \mathrm{Im} \big( \partial_k u_n(t)  \overline{u_n(t)} \big)  \psi_j(t)  \mathrm{d}x
&= -\int \Re \big(i\partial_t \partial_k u_n(t) \overline{u_n(t)}\big)\psi_j(t)\mathrm{d}x+\int \Re\big(\overline{\partial_k u_n(t)} i\partial_t u_n(t)\big)\psi_j(t)\mathrm{d}x\\&\quad+\int \mathrm{Im} \big( \partial_k u_n(t)  \overline{u_n(t)} \big)  \partial_t\psi_j(t)  \mathrm{d}x.
\end{align*}
On the other hand, one has \begin{align*}
    \int \Re\Big( -\Delta u_n(t)\overline{\partial_k u_n(t)}\psi_j(t)+\partial_k \Delta u_n(t) \overline{u_n(t)}\psi_j(t)\Big)\mathrm{d}x&=\int \Re\big(\nabla u_n(t)\cdot \nabla \psi_j(t)\overline{\partial_k u_n(t)}\big)\mathrm{d}x\\&\quad-\int \Re\big(\partial_k\nabla u_n(t)\cdot\overline{u_n(t)}\nabla \psi_j(t)\big)\mathrm{d}x\\&=2\int \Re\big(\nabla u_n(t)\cdot \nabla \psi_j(t)\overline{\partial_k u_n(t)}\big)\mathrm{d}x\\&\quad+\int \Re\big(\partial_k u_n(t) \overline{u_n(t)}\Delta\psi_j(t)\big)\mathrm{d}x.
\end{align*}
An integration by parts yields
$$\int \Re\big(\partial_k u_n(t) \overline{u_n(t)}\Delta\psi_j(t)\big)\mathrm{d}x=-\frac12\int \abs{u_n(t)}^2 \partial_k \Delta\psi_j(t)\mathrm{d}x.$$
Then
\begin{align*}
    \int \Re\Big( -\Delta u_n(t)\overline{\partial_k u_n(t)}\psi_j(t)+\partial_k \Delta u_n(t) \overline{u_n(t)}\psi_j(t)\Big)\mathrm{d}x&=2\int \Re\big(\nabla u_n(t)\cdot \nabla \psi_j(t)\overline{\partial_k u_n(t)}\big)\mathrm{d}x\\&\quad -\frac12\int \abs{u_n(t)}^2 \partial_k \Delta\psi_j(t)\mathrm{d}x.
\end{align*}
For the logarithmic part, one obtains \begin{align*}
    \int \Re\Big(-\overline{u_n(t)}\partial_k (u_n(t)\log\abs{u_n(t)}^2)+u_n(t)\log\abs{u_n(t)}^2\overline{\partial_k u_n(t)}\Big)\psi_j(t)\mathrm{d}x&=-\int \partial_k \abs{u_n(t)}^2 \psi_j(t)\mathrm{d}x\\&=\int  \abs{u_n(t)}^2 \partial_k\psi_j(t)\mathrm{d}x.
\end{align*}
For the power term, we have \begin{align*}
    \int \Re\Big(-\overline{u_n(t)}&\partial_k (u_n(t)g(\abs{u_n(t)}^{2})+u_n(t)g(\abs{u_n(t)}^{2})\overline{\partial_k u_n(t)}\Big)\psi_j(t)\mathrm{d}x\\&=-\int \abs{u_n(t)}^2\partial_k \Big(g(\abs{u_n(t)}^{2})\Big) \psi_j(t)\mathrm{d}x\\&=-2\int G(|u_n(t)|^2)\partial_k \psi_j(t)+\int|u_n(t)|^2g(\abs{u_n(t)}^2)\partial_k\psi_j(t).
\end{align*}
In the second equality, we used 
\begin{align*}
    2\int \psi_j(t)\partial_k G(|u_n(t)|^2) &=\int g(\abs{u_n(t)}^2)\partial_k \abs{u_n(t)}^2 \psi_j(t)\\&=-\int g(\abs{u_n(t)}^2)\abs{u_n(t)}^2\partial_k\psi_j(t)-\int \abs{u_n(t)}^2\partial_k\Big(g(\abs{u_n(t)}^2)\Big) \psi_j(t)
\end{align*}$$ $$
Finally, combining all these calculations, we obtain \begin{align*}
    \frac{\rm d}{\mathrm{d}t} \int \mathrm{Im} \big( \nabla u_n(t)  \overline{u_n(t)} \big)  \psi_j(t)  \mathrm{d}x &=2\int \Re\big(\nabla u_n(t)\cdot \nabla \psi_j(t)\overline{\nabla u_n(t)}\big)\mathrm{d}x-\frac12\int \abs{u_n(t)}^2 \nabla \Delta\psi_j(t)\mathrm{d}x\\&\qquad +\int  \abs{u_n(t)}^2 \nabla\psi_j(t)\mathrm{d}x-2\int G(|u_n(t)|^2)\nabla \psi_j(t)\mathrm{d}x\\&\qquad +\int|u_n(t)|^2g(\abs{u_n(t)}^2)\nabla\psi_j(t)\mathrm{d}x+\int \mathrm{Im} \big( \nabla u_n(t)  \overline{u_n(t)} \big)  \partial_t\psi_j(t)  \mathrm{d}x.
\end{align*}
By assumption \ref{A1}-\ref{A2}, we have 
\begin{align*}
    \abs{\int G(|u_n(t)|^2)\nabla\psi_j(t)\mathrm{d}x}&+\abs{\int|u_n(t)|^2g(\abs{u_n(t)}^2)\nabla\psi_j(t)}\mathrm{d}x\\&\lesssim \int \abs{u_n(t)}^{2}\abs{\nabla \psi_j(t)}\mathrm{d}x+\int \abs{u_n(t)}^{2\sigma+2}\abs{\nabla \psi_j(t)}\mathrm{d}x.
\end{align*}
Thanks to Lemma \ref{der}, we have
$$
\left| \frac{\rm d}{\mathrm{d}t} \mathcal{J}_j(t, u_n(t)) \right|
\lesssim \int \big( |\nabla u_n(t)|^2 + |u_n(t)|^2 +\abs{u_n(t)}^{2\sigma+2}\big)  |\nabla \psi_j(t)|  \mathrm{d}x
+ \int |u_n(t)|^2  \| D^3_{xxx} \psi_j(t) \|  \mathrm{d}x.
$$
Now, remark that
\begin{align*}
\int \big( |\nabla u_n(t)|^2 &+ |u_n(t)|^2 +\abs{u_n(t)}^{2\sigma+2}\big)  |\nabla \psi_j(t)|  \mathrm{d}x
\\&\lesssim \int \big( |\nabla Q(t)|^2 + |Q(t)|^2 +\abs{Q(t)}^{2\sigma+2}\big)  |\nabla \psi_j(t)|  \mathrm{d}x+  \| w_n(t) \|_{H^1}^2+\norm{w_n(t)}_{H^1}^{2\sigma+2},
\end{align*}
and $$\int |u_n(t)|^2  \| D^3_{xxx} \psi_j(t) \|  \mathrm{d}x
\lesssim \int |Q(t)|^2  \| D^3_{xxx} \psi_j(t) \|  \mathrm{d}x
+  \| w_n(t) \|_{L^2(\mathbb{R})}^2 .$$
By the assumption \eqref{as} we know that for $t\in [\Upsilon^\dagger,T_n]$, we have $$\norm{w_n(t)}_{H^1}\lesssim e^{-\frac{(v_\star (t-\Upsilon_1))^2}{8}}.$$
Moreover, thanks to Lemma \ref{est2}, we have $$
\int \left( |\nabla Q(t)|^2 + |Q(t)|^2+\abs{Q(t)}^{2\sigma+2} \right) |\nabla \psi_j(t)|  \mathrm{d}x
\lesssim e^{-\frac{(v_\star (t-\Upsilon_1))^2}{4}},
$$
and $$\int |Q(t)|^2  \| D^3_{xxx} \psi_j(t) \|  \mathrm{d}x
\lesssim e^{-\frac{(v_\star (t-\Upsilon_1))^2}{4}}.$$
Finally, we combine all the estimations we obtained. \begin{equation*}
    \left| \frac{\rm d}{\mathrm{d}t} M_j(t,u_n(t)) \right|+\left| \frac{\rm d}{\mathrm{d}t} \mathcal{J}_j(t, u_n(t)) \right|\lesssim e^{-\frac{(v_\star (t-\Upsilon_1))^2}{4}}.\qedhere
\end{equation*}
\end{proof}
\begin{corollary}\label{44}
    For $n$ large enough and $t \in [\Upsilon^\dagger, T_n ]$, there holds
$$
\abs{S^{\mathrm{\rm loc}}(t, u_n(t)) - S^{\mathrm{\rm loc}}(T_n, Q(T_n))}
\lesssim \Upsilon_*^{-1} e^{-\frac{(v_\star (t-\Upsilon_1))^2}{4}}.
$$
\end{corollary}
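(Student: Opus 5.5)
The estimate follows from the fundamental theorem of calculus applied to $\tilde S_n(s)=S^{\rm loc}(s,u_n(s))$ on $[t,T_n]$, combined with the derivative bound of the preceding lemma and a Gaussian tail integral. Since $u_n(T_n)=Q(T_n)$, we have
$$
\bigl|S^{\rm loc}(t,u_n(t))-S^{\rm loc}(T_n,Q(T_n))\bigr|\le \int_t^{T_n}\Bigl|\frac{\mathrm d}{\mathrm ds}\tilde S_n(s)\Bigr|\,\mathrm ds .
$$
To justify differentiating $\tilde S_n$ for a $W_1$ solution that is only $\mathcal C^1$ into $W_1'$, I would use the standard regularization argument, as in \cite{15}. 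Alternatively, I would write the localized mass and momentum identities in integrated form, which is all that is needed.

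The next step is to insert the derivative bound. The statement of the preceding lemma gives only $e^{-(v_\star(s-\Upsilon_1))^2/8}$. Integrating that bound would produce only an exponent $/8$, which is too weak for the target. However, the conclusion of its proof actually yields
$$
\Bigl|\tfrac{\mathrm d}{\mathrm ds}M_j\Bigr|+\Bigl|\tfrac{\mathrm d}{\mathrm ds}\mathcal J_j\Bigr|\lesssim e^{-\frac{(v_\star(s-\Upsilon_1))^2}{4}}, \qquad s\in[\Upsilon^\dagger,T_n].
$$
This holds because every contribution is either quadratic in $w_n$, bounded by $\|w_n\|_{H^1}^2\le e^{-(v_\star(s-\Upsilon_1))^2/4}$ via \eqref{as} (and $\|w_n\|_{H^1}^{2\sigma+2}$ is smaller still), or quadratic in $Q$ localized on $\operatorname{supp}\nabla\psi_j$, bounded via Lemma~\ref{est1} and Lemma~\ref{est2}. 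Cross terms $\int |\nabla Q||\nabla w_n||\nabla\psi_j|$ are handled by Cauchy--Schwarz in the same way. Since $E(u_n)$ is conserved, these two derivatives are the only contributions. I will therefore use this $/4$ rate.

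Finally, we integrate the tail. With $a=v_\star/2$ and $\tau=s-\Upsilon_1$,
$$
\int_t^{\infty}e^{-\frac{(v_\star(s-\Upsilon_1))^2}{4}}\,\mathrm ds=\int_{t-\Upsilon_1}^\infty e^{-a^2\tau^2}\,\mathrm d\tau\le \frac{1}{2a^2(t-\Upsilon_1)}e^{-a^2(t-\Upsilon_1)^2}.
$$
This follows from $\int_T^\infty e^{-a^2\tau^2}\mathrm d\tau\le \int_T^\infty\frac{\tau}{T}e^{-a^2\tau^2}\mathrm d\tau$. Since $t\ge\Upsilon^\dagger\ge\Upsilon_0$, we have $t-\Upsilon_1\ge\Upsilon_*$, so the prefactor is $\lesssim \Upsilon_*^{-1}$ with constants depending only on $v_\star$. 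This is exactly the same computation as at the end of Lemma~\ref{bot1}. The condition ``$n$ large enough'' only serves to ensure $T_n>\Upsilon_0$, so that the interval is nonempty and the bootstrap hypothesis \eqref{as} is available.

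The main obstacle is the mismatch of exponents just described. If the lemma's stated $/8$ rate were all one had, the $/4$ conclusion would fail. The key point is therefore to re-extract from that proof the quadratic structure in $w_n$ and in the tails of $Q$, which gives the $/4$ rate. I would also check the justification of the time derivative in the non-$\mathcal C^2$ setting, but that part is routine.
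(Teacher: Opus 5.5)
Your proposal is correct and follows the paper's own argument. You apply the fundamental theorem of calculus to $\tilde S_n$ on $[t,T_n]$ using $u_n(T_n)=Q(T_n)$, insert the derivative bound, and integrate the Gaussian tail; the paper cites Lemma~\ref{gausse} for that last step, whereas you do the one-dimensional computation directly. You are also right that this needs the rate $e^{-(v_\star(s-\Upsilon_1))^2/4}$ that the proof of the preceding lemma actually delivers, not the $/8$ written in its statement, and the paper uses this rate implicitly.
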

\begin{proof}
    We can estimate this difference thanks to the previous estimate and Lemma \ref{gausse}: \begin{equation*}
        \abs{S^{\text{\rm loc}}(t, u_n(t)) - S^{\text{\rm loc}}(T_n, Q(T_n))}\leq \int _t^{T_n}\abs{\frac{\mathrm{d} \tilde{S}}{\mathrm{d}s}}\mathrm{d}s\lesssim \Upsilon_*^{-1}e^{-\frac{(v_\star (t-\Upsilon_1))^2}{4}}.\qedhere
    \end{equation*}
\end{proof}
Once again, thanks to Lemma \ref{011}, we obtain the estimate.
\begin{lemma}\label{55}
For all $t\in [\Upsilon^\dagger,T_n]$ and each $j\in\{1,...,N\}$ we have,
    $$
\abs{S_0^{\mathrm{\rm loc}}(t, Q(t))}
\lesssim \Upsilon_*^{-1} e^{-\frac{(v_\star (t-\Upsilon_1))^2}{4}},
$$
$$
\abs{S_j^{\mathrm{\rm loc}}(t, Q(t)) - S_j^{\mathrm{\rm loc}}(t, Q_j(t))}
\lesssim \Upsilon_*^{-1} e^{-\frac{(v_\star (t-\Upsilon_1))^2}{4}}.
$$
\end{lemma}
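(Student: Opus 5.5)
Both estimates follow by writing out $S_0^{\rm loc}$ and $S_j^{\rm loc}$ term by term, splitting $Q=Q_j+\sum_{k\neq j}Q_k$, and bounding every piece that involves a soliton outside its own region with Lemmas \ref{est1}, \ref{est2} and \ref{est4}. Every such piece carries either a factor $\psi_k$ with $k\neq j$ (or $\psi_0\le 1-\psi_j$) or a cross product $Q_jQ_k$. Since only $Q$ enters, and no $w_n$, the bootstrap assumption \eqref{as} is not needed.

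\emph{First estimate.} Since $\omega_0=0$ and $v_0=0$, we have $S_0^{\rm loc}(t,Q)=E_0(t,Q)$. For the kinetic part, $|\nabla Q|^2\lesssim\sum_j|\nabla Q_j|^2$ and $\psi_0\le 1-\psi_j$, so Lemma \ref{est1} bounds it by $\Upsilon_*^{-6}e^{-(v_\star(t-\Upsilon_1))^2/4}$. For the $G$ term, assumptions \ref{A1}--\ref{A2} give $|G(s)|\lesssim s+s^{\sigma+1}$. Because $Q$ is bounded in $L^\infty$, this reduces to $\int|Q|^2\psi_0$, which is controlled by the $L^2((1-\psi_j)\mathrm{d}x)$ bounds. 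For the logarithmic term $|Q|^2(\log|Q|^2-1)$, I would use $\bigl||z|^2\log|z|^2\bigr|\lesssim |z|^{2-\delta}+|z|^{2+\delta}$ on bounded $z$. A cleaner route, however, is to write $\bigl||Q|^2\log|Q|^2\bigr|\le |Q|\,\bigl|Q\log|Q|^2\bigr|\cdot$ (corrections), and to estimate $|Q\log|Q|^2|\lesssim\sum_j|Q_j\log|Q_j|^2|+\sum_j|Q_j|$ in the tail region. The last line of Lemma \ref{est2} then applies, and Cauchy--Schwarz with the $L^2((1-\psi_j)\mathrm{d}x)$ bound on $Q_j$ produces the squared Gaussian factor $e^{-(v_\star(t-\Upsilon_1))^2/4}$.

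\emph{Second estimate.} I would expand $S_j^{\rm loc}(t,Q)-S_j^{\rm loc}(t,Q_j)$ with $Q=Q_j+R_j$, where $R_j:=\sum_{k\ne j}Q_k$. The gradient, mass and momentum parts are quadratic, so their differences are sums of terms of the form $\int\psi_j\nabla Q_j\cdot\nabla\overline{R_j}$, $\int\psi_j|\nabla R_j|^2$, $\int\psi_j Q_j\overline{R_j}$, and similar. The pure $R_j$ terms are bounded by Lemma \ref{est2}, and the cross terms by Lemma \ref{est4}. Both bounds are of size $\Upsilon_*^{-1}e^{-(v_\star(t-\Upsilon_1))^2/4}$. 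For the $G$ part I would use the mean value theorem together with Lemma \ref{power}:
\[
|G(|Q|^2)-G(|Q_j|^2)|\lesssim |R_j|\,(|Q_j|+|R_j|),
\]
because all the functions involved are bounded. Integrated against $\psi_j$, this again reduces to cross terms and pure $R_j$ terms.

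\emph{Main obstacle.} The hardest step is the non-Lipschitz difference $F(Q)-F(Q_j)$, where $F(s)=|s|^2(\log|s|^2-1)$. I would first try the pointwise bound
\[
|F(z_1)-F(z_2)|\lesssim |z_1-z_2|\bigl(|z_1|+|z_2|\bigr)\bigl(1+|\log|z_1||+|\log|z_2||\bigr),
\]
which follows from $dF(z)(h)=2\Re(z\bar h)\log|z|^2$ along the segment. On that segment the logarithm is only dangerous where both values are tiny, and there the quantity $|z|\,|\log|z||$ is small, so it can be absorbed. This yields terms like $|R_j|\,|Q_j\log|Q_j|^2|$ and $|R_j|\,|R_j\log|R_j|^2|$, plus a remainder $|R_j|(|Q_j|+|R_j|)$. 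The first two are handled by Cauchy--Schwarz using the log bounds in Lemma \ref{est2} on $\psi_k$ and $1-\psi_j$, together with the $L^2$ bounds on $Q_k$ over those regions. Because the lower bound in Lemma \ref{011} says $|\log|Q_j||\lesssim 1+|x-x_j^*|^2$, any leftover logarithmic weight becomes a polynomial weight. That weight is then absorbed by the weighted cross-term bound $(1+|x-x_\ell^*|^2)$ in Lemma \ref{est4}.
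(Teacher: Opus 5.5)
Your architecture is the paper's. You expand with $Q=Q_j+R_j$, send pure $R_j$ pieces to Lemma~\ref{est2} and cross pieces to Lemma~\ref{est4}, and use the Gaussian lower bound of Lemma~\ref{011} to turn $\log|Q_j|$ into a polynomial weight. Your gradient, mass, momentum and $G$ parts match the paper.

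For the first estimate your route is more direct than the paper's, which compares $S_0^{\rm loc}(t,Q)$ with $S_0^{\rm loc}(t,Q_j)$ and then bounds the latter. On $\operatorname{supp}\psi_0$ every soliton is in its tail. The pointwise bound $|Q\log|Q|^2|\lesssim\sum_j(|Q_j\log|Q_j|^2|+|Q_j|)$ does hold, by monotonicity of $s|\log s|$ near $0$ and boundedness. So Cauchy--Schwarz with two tail factors from Lemmas~\ref{est1}--\ref{est2} gives the full rate without any cross-term lemma, and this works. Your first idea, $\bigl||z|^2\log|z|^2\bigr|\lesssim|z|^{2-\delta}+|z|^{2+\delta}$, must genuinely be dropped: the upper bound of Lemma~\ref{011} only gives $\int|Q_j|^{2-\delta}(1-\psi_j)\,\mathrm{d}x\lesssim e^{-(1-\delta/2)(v_\star(t-\Upsilon_1))^2/4}$ up to polynomial factors.

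The logarithmic part of the second estimate has a step that fails as written.

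(i) Your displayed bound is true but too lossy. With $z_1=Q$, $z_2=Q_j$, expanding $(|z_1|+|z_2|)(1+|\log|z_1||+|\log|z_2||)$ produces $|R_j||Q_j||\log|Q||$. None of the available estimates controls this, since $|Q|$ has no lower bound and may vanish where $Q_j$ and $R_j$ cancel. You need $|F(z_1)-F(z_2)|\lesssim|z_1-z_2|\,M(1+|\log M|)$ with $M=\max(|z_1|,|z_2|)$, followed by subadditivity of $s(1+|\log s|)$. That is what your phrase ``can be absorbed'' has to mean.

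(ii) More seriously, the resulting term $\int|R_j|\,|Q_j\log|Q_j|^2|\,\psi_j\,\mathrm{d}x$ cannot be handled by Cauchy--Schwarz with Lemma~\ref{est2}. That lemma bounds $Q_j\log|Q_j|^2$ only on $1-\psi_j$ and on $\psi_k$ for $k\neq j$. Meanwhile $\|Q_j\log|Q_j|^2\|_{L^2(\psi_j\mathrm{d}x)}$ is of order one, so you only get $(t-\Upsilon_1)^{-3}e^{-(v_\star(t-\Upsilon_1))^2/8}$, half the required exponent. This term must be bounded pointwise by $\sum_{k\neq j}|Q_k||Q_j|(1+|x-x_j^*(t)|^2)$ and fed into Lemma~\ref{est4} with $\ell=j$. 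Cauchy--Schwarz is fine only for $|R_j|\,|R_j\log|R_j|^2|$, where both factors sit in tails.

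The paper avoids (i) and (ii) at once. After normalizing by $C_N$, it applies Lemma~\ref{liplog} to the reals $|\widetilde Q_j|^2,|\widetilde Q|^2$, which puts the logarithm on $|Q_j|$ only. This gives $\bigl||Q|^2\log|Q|^2-|Q_j|^2\log|Q_j|^2\bigr|\lesssim|R_j|(|Q|+|Q_j|)(1+|x-x_j^*(t)|^2)$. Every term is then a weighted product handled by Lemmas~\ref{est2} and~\ref{est4}, and no $\log|Q|$ or $\log|R_j|$ ever appears.
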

\begin{proof}
    We have: $$
\begin{aligned}
S_j^{\mathrm{\rm loc}}(t, Q(t)) - S_j^{\mathrm{\rm loc}}(Q_j(t)) 
&= \frac{1}{2} \Biggl(
\int \lvert \nabla Q(t) \rvert^2  \psi_j(t)  \mathrm{d}x 
- \int \lvert \nabla Q_j(t) \rvert^2  \psi_j(t)  \mathrm{d}x
\Biggr) \\
&\quad -  \frac{1}{2}\Biggl(
\int \lvert Q(t) \rvert^2 \log \lvert Q(t) \rvert^2  \psi_j(t)  \mathrm{d}x 
- \int \lvert Q_j(t) \rvert^2 \log \lvert Q_j(t) \rvert^2  \psi_j(t)  \mathrm{d}x
\Biggr) \\
&\quad - \Biggl( \int G(|Q(t)|^2)\psi_j(t)\mathrm{d}x-\int G(|Q_j(t)|^2)\psi_j(t)\mathrm{d}x\Biggr) \\
&\quad + \left(  1+\omega_j + \frac{|v_j|^2}{4} \right)
\left( M_j(t, Q(t)) - M_j(t, Q_j(t)) \right) \\
&\quad + v_j \cdot \left( \mathcal{J}_j(t, Q(t)) - \mathcal{J}_j(t, Q_j(t)) \right).
\end{aligned}
$$

$\circ$ \textit{The first term:}
$$|\nabla Q(t,x)|^{2}
= |\nabla Q_j(t,x)|^{2}
+ 2\Re\!\left( \nabla Q_j(t,x) \overline{\sum_{k\neq j} \nabla Q_k(t,x)} \right)
+ \left| \sum_{k\neq j} \nabla Q_k(t,x) \right|^{2}.$$
Thus, \begin{align*}
    \frac{1}{2}\abs{
\int \lvert \nabla Q(t) \rvert^2  \psi_j(t)  \mathrm{d}x 
- \int \lvert \nabla Q_j(t) \rvert^2  \psi_j(t)  \mathrm{d}x
}&\leq \sum_{k\neq j}\int \abs{\nabla Q_j(t)}\abs{\nabla Q_k(t)}\psi_j(t)\mathrm{d}x\\&\quad +\frac{N-1}{2}\sum_{k\neq j}\int \abs{ \nabla Q_k}^2\mathrm{d}x.
\end{align*}

$\circ$ \textit{The second term:} Let us normalize $Q$ and $Q_j$ in order to apply Lemma~\ref{liplog}.
  Set
$$
C_N := \bigl( N \max_j C_2^{(j)} \bigr)^{-1},
$$
where $C_2^{(j)}$ is the upper constant defined in Lemma \ref{011} associated with $Q_j$, so that
$$
C_N |Q(t,x)| \leq 1 \quad \text{ and } \quad C_N |Q_j(t,x)| \leq 1.
$$
Now define
$$
\widetilde Q(t,x) := C_N Q(t,x),
\qquad
\widetilde Q_j(t,x) := C_N Q_j(t,x) .
$$
And we have 
\begin{align*}
\Bigl| |Q(t,x)|^2 \log |Q(t,x)|^2 &-  |Q_j(t,x)|^2 \log |Q_j(t,x)|^2 \Bigr|
\\&=C_N^{-2}\Bigl| |\tilde{Q}(t,x)|^2 \big(\log |\tilde{Q}(t,x)|^2+\log C_N^{-2}\big) -  |\tilde{Q}_j(t,x)|^2 \big(\log |\tilde{Q}_j(t,x)|^2+\log C_N^{-2}\big) \Bigr|
\\&\lesssim
\abs{\abs{\tilde{Q}(t,x)}^2-\abs{\tilde{Q}_j(t,x)}^2}  \abs{1 - \log |\tilde{Q}_j(t,x)|^2} \\
&\lesssim \sum_{m\neq j}\abs{Q_m(t,x)}\bigg(\sum_k\abs{Q_k(t,x)}+\abs{Q_j(t,x)}\bigg)\bigg(1+\abs{x-x_j^*(t)}^2\bigg)\\
&\lesssim \sum_{m\neq j}\sum_{k=1}^N\abs{Q_m(t,x)}\abs{Q_k(t,x)}\bigg(1+\abs{x-x_j^*(t)}^2\bigg)\\ 
&=\sum_{k=1}^N\sum_{m\notin\{j,k\}}^N\abs{Q_m(t,x)}\abs{Q_k(t,x)}\bigg(1+\abs{x-x_j^*(t)}^2\bigg)\\& \quad+\sum_{k\neq j}^N \abs{Q_k(t,x)}^2\bigg(1+\abs{x-x_j^*(t)}^2\bigg).
\end{align*}
The first inequality is a consequence of Lemma~\ref{liplog}. Taking the $L^2(\mathbb{R}^d)$ inner product with $\psi_j(t,x)$, we obtain
\begin{align*}
    \int\Bigl| |Q(t)|^2 \log |Q(t)|^2 &-  |Q_j(t)|^2 \log |Q_j(t)|^2 \Bigr|\psi_j(t)\mathrm{d}x\\&\lesssim \sum_{k=1}^N\sum_{m\notin\{j,k\}}^N\int\abs{Q_m(t)}\abs{Q_k(t)}\bigg(1+\abs{x-x_j^*(t)}^2\bigg)\psi_j(t)\mathrm{d}x\\& \quad+\sum_{k\neq j}^N \int \abs{Q_k(t)}^2\bigg(1+\abs{x-x_j^*(t)}^2\bigg)\psi_j(t)\mathrm{d}x
    \\&
    \lesssim \Upsilon_*^{-1} e^{-\frac{(v_\star (t-\Upsilon_1))^2}{4}}.
\end{align*}
In the last inequality, we used Lemma \ref{est4} for the first term. For the second term, we have, for $k \neq j$,
\begin{align*}
    \int \abs{Q_k(t)}^2\Big(1+\abs{x-x_j^*(t)}^2\Big)\psi_j(t)\mathrm{d}x
    &\lesssim \int \abs{Q_k(t)}^2\Big(1+\abs{x-x_k^*(t)}^2\Big)\psi_j(t)\mathrm{d}x \\
    &\qquad + \abs{x_k^*(t)-x_j^*(t)}^2 \int \abs{Q_k(t)}^2\psi_j(t)\mathrm{d}x.
\end{align*}
which yields the desired conclusion by applying Lemma~\ref{est2}.

$\circ$ \textit{The third term:}
 By Lemma \ref{power}, we have 
 \begin{equation}
     \abs{G(|Q(t)|^2)-G(|Q_j(t)|^2)}\lesssim  \abs{\lvert Q(t) \rvert^{2}-\abs{Q_j(t)}^{2}}+ \abs{\lvert Q(t) \rvert^{2\sigma+2}-\abs{Q_j(t)}^{2\sigma+2}}.
 \end{equation}
We apply the mean value theorem to the function $x \mapsto x^{\sigma+1}$ and use the fact that $Q_j(t)$ and $Q(t)$ are bounded in $L^{\infty}$ uniformly in time to obtain
 $$\int\abs{G(|Q(t)|^2)-G(|Q_j(t)|^2)}\psi_j(t)\mathrm{d}x\lesssim \int \abs{\lvert Q(t) \rvert^2-\abs{Q_j(t)}^2} \psi_j(t)  \mathrm{d}x.$$
On the other hand, we have $$|Q(t,x)|^{2}
= |Q_j(t,x)|^{2}
+ 2\Re\!\left( Q_j(t,x) \overline{\sum_{k\neq j} Q_k(t,x)} \right)
+ \left| \sum_{k\neq j} Q_k(t,x) \right|^{2}.$$
Hence, $$
\int\abs{G(|Q(t)|^2)-G(|Q_j(t)|^2)}\psi_j(t)\mathrm{d}x
\lesssim \sum_{k\neq j}\int \abs{Q_j(t)}\abs{Q_k(t)}\psi_j(t)\mathrm{d}x+(N-1)\sum_{k\neq j}\int \abs{Q_k(t)}^2\psi_j(t)\mathrm{d}x.
$$
For the first term, we use Lemma~\ref{est4}, for the second one, we use Lemma~\ref{est2}. We conclude that
$$
\int\abs{G(|Q(t)|^2)-G(|Q_j(t)|^2)}\psi_j(t)\mathrm{d}x
\lesssim \Upsilon_*^{-1} e^{-\frac{(v_\star (t-\Upsilon_1))^2}{4}}.
$$

$\circ$\textit{ The last two term:} We have $$M_j(Q(t))=M_j(Q_j(t))+\int\Re(Q_j(t)\overline{\sum_{k\neq j}Q_k(t)})\psi_j(t)\mathrm{d}x+\frac12\int \abs{\sum_{k\neq j}Q_k(t)}^2\psi_j(t)\mathrm{d}x,$$
$$\mathcal{J}_j(Q(t))-\mathcal{J}_j(Q_j(t))=\frac12\sum_{(k,m)\neq (j,j)}\Im \int \nabla Q_k(t)\overline{Q_m(t)}\psi_j(t)\mathrm{d}x.$$
The conclusion comes in the same way. This completes the proof for the second inequality.

For the first inequality, we observe that $\psi_0(t)\leq 1-\psi_\ell(t)$ for all $\ell\in \{1,...,N\}$, then we repeat a similar computation, and for some $j\in \{1,...,N\}$, we obtain
$$
\abs{S_0^{\mathrm{\rm loc}}(t,Q(t)) - S_0^{\mathrm{\rm loc}}(t,Q_j(t))}
\lesssim \Upsilon_*^{-1} e^{-\frac{(v_\star (t - \Upsilon_1))^2}{4}}.
$$
Thanks again to Lemma~\ref{est1} and Lemma \ref{est2}, we get \begin{equation*}
    \abs{S_0^{\mathrm{\rm loc}}(t,Q_j(t))}
\lesssim \Upsilon_*^{-1} e^{-\frac{(v_\star (t - \Upsilon_1))^2}{4}},
\end{equation*} thus
\begin{equation*}
    \abs{S_0^{\mathrm{\rm loc}}(t,Q(t))}
\lesssim \Upsilon_*^{-1} e^{-\frac{(v_\star (t - \Upsilon_1))^2}{4}}.\qedhere
\end{equation*}
\end{proof}
\begin{proof}[Proof of Porposition \ref{slow}]
    We conclude the proof using the decomposition in \eqref{10}. Each term is then estimated by applying Corollary \ref{44}, Lemma \ref{55}, and Lemma \ref{bot33}.
\end{proof}
\subsection{Proof of the Bootstrap Property }
Now we prove Lemma~\ref{05}. Let $t \in [\Upsilon^\dagger, T_n]$.  
Thanks to Lemma~\ref{bot2}, we have
$$
\|\nabla w_n(t)\|_{2}^2 \lesssim 
S^{\mathrm{\rm loc}}(t,u_n(t)) - \sum_{j=1}^N S_j(Q_j)
+ C \Upsilon_*^{-1/2} \exp\!\left(-\frac{(v_\star (t-\Upsilon_1))^2}{4}\right).
$$
Applying Lemma~\ref{slow}, we obtain
$$
\|\nabla w_n(t)\|_{2}^2 \lesssim 
\Upsilon_*^{-1/2} \exp\!\left(-\frac{(v_\star (t-\Upsilon_1))^2}{4}\right).
$$
Finally, thanks to Lemma~\ref{bot1}, we conclude that
$$
\|w_n(t)\|_{H^1}^2 \lesssim 
\Upsilon_*^{-1/2} \exp\!\left(-\frac{(v_\star (t-\Upsilon_1))^2}{4}\right).
$$
Choosing $\Upsilon_*$ sufficiently large yields the desired conclusion.
\section{Proof of the compactness property}\label{5}
This subsection is devoted to the proof of the compactness property i.e Lemma \ref{07}.
The proof is completely similar to that in~\cite{25,22,12,15}.
The main idea is to show that $u_n(\Upsilon_0)$ is uniformly bounded in $W_1(\mathbb{R}^d)$.

\begin{proposition}\label{cv}
    The initial data $u_n(\Upsilon_0)$ is uniformly bounded in $W_1(\mathbb{R}^d)$. In particular, up to the extraction of a subsequence, there exists $u_0 \in W_1(\mathbb{R}^d)$ such that
$$
u_n(\Upsilon_0) \rightharpoonup u_0
\quad \text{weakly in } W_1(\mathbb{R}^d).
$$
\end{proposition}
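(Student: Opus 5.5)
We need to bound $u_n(\Upsilon_0)$ uniformly in $W_1(\mathbb{R}^d)$, i.e.\ bound $\|u_n(\Upsilon_0)\|_{H^1}$ and $\int |u_n(\Upsilon_0)|^2\,\big|\log|u_n(\Upsilon_0)|^2\big|\,\mathrm{d}x$ uniformly in $n$. The $H^1$ bound is immediate: by Proposition~\ref{04},
\[
\|u_n(\Upsilon_0)\|_{H^1}\le \|Q(\Upsilon_0)\|_{H^1}+1,
\]
and $\|Q(\Upsilon_0)\|_{H^1}$ is finite by Lemma~\ref{011} and Corollary~\ref{0111}. It remains to control the Orlicz part. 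I split $|u|^2|\log|u|^2|$ into the region $|u|\ge 1$ and the region $|u|<1$. On $\{|u|\ge1\}$, the bound $\log y\lesssim y^{\delta}$ together with the Sobolev embedding $H^1\hookrightarrow L^{2+\delta}$ gives a bound by $\|u_n(\Upsilon_0)\|_{H^1}^{2+\delta}$. The real issue is the small-amplitude region, where $-|u|^2\log|u|^2$ is essentially controlled by a weighted $L^2$ norm. Indeed, for $0<y<1$ one has $y^2|\log y^2|\lesssim y^{2-\epsilon}$, and by the standard Cazenave trick, $\int_{|u|<1}|u|^2|\log|u|^2|\lesssim \|u\|_{L^2}^2+\|\langle x\rangle u\|_{L^2}^{2}$ up to an interpolation. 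So the plan is to obtain a uniform bound on $\int |x|^2|u_n(\Upsilon_0)|^2\,\mathrm{d}x$.

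For the weighted bound, I would use the virial-type identity on $[\Upsilon_0,T_n]$. At $t=T_n$ we have $u_n(T_n)=Q(T_n)$, so $\int|x|^2|u_n(T_n)|^2$ grows like $T_n^2$. Hence one should not propagate $|x|^2$ directly. Instead, I would propagate a localized quantity. Compute
\[
\frac{\mathrm{d}}{\mathrm{d}t}\int |x|^2|u_n|^2=4\,\Im\int \bar u_n\, x\cdot\nabla u_n ,
\]
which contains no nonlinear contribution, since both the logarithmic term and $g$ are gauge invariant. Then I would compare with the same quantity computed for $Q$, using $u_n-Q=w_n$ and the estimate of Proposition~\ref{04}. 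In detail, set
\[
I_n(t)=\int |x|^2\big(|u_n(t)|^2-|Q(t)|^2\big)\,\mathrm{d}x .
\]
Then $I_n(T_n)=0$, and $I_n'(t)=4\Im\int x\cdot(\bar u_n\nabla u_n-\bar Q\nabla Q)$ plus a correction, since $Q$ is not an exact solution; that correction is expressible via the interaction terms of Lemma~\ref{est5}/\ref{est6}, which are Gaussian small. The main term is bounded by $|x|\,|w_n|(|\nabla u_n|+|\nabla Q|)$ plus $|x||\nabla w_n|\,|Q|$. The unweighted factor $|x|$ is dangerous, so I would bound $\|x w_n\|_{L^2}\le \sqrt{I_n\text{-type quantity}}$ and close a Gronwall inequality backward from $T_n$. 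The Gaussian decay in $t$ from Proposition~\ref{04} beats any polynomial factor $|x|\lesssim 1+t$ on the support of $Q$.

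The \textbf{main obstacle} is precisely this step: controlling $\|x\,w_n(t)\|_{L^2}$ without an a priori weighted bound. I would first try the direct estimate
\[
\frac{\mathrm{d}}{\mathrm{d}t}\|x w_n\|_{L^2}^2 \lesssim \|x w_n\|_{L^2}\,\|\nabla w_n\|_{L^2}+(\text{Gaussian-small terms in }t).
\]
This comes from the equation for $w_n$: the nonlinear differences contribute nothing weighted, by Lemma~\ref{002} and Lemma~\ref{012}, after multiplying by $|x|^2\bar w_n$ and taking the imaginary part. It gives $\frac{\mathrm{d}}{\mathrm{d}t}\|xw_n\|\lesssim e^{-(v_\star(t-\Upsilon_1))^2/8}(1+t)$. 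Integrating backward from $T_n$, where $xw_n(T_n)=0$, yields $\|x w_n(\Upsilon_0)\|_{L^2}\lesssim 1$ uniformly in $n$. Since $\|xQ(\Upsilon_0)\|_{L^2}$ is finite, $u_n(\Upsilon_0)$ is bounded in $H^1\cap L^2(\langle x\rangle^2\mathrm{d}x)$, hence in $W_1$. The weak convergence then follows from the reflexivity of $W_1$ (a reflexive Orlicz space) and Banach--Alaoglu. Moreover, the compact embedding $H^1\cap L^2(|x|^2)\hookrightarrow L^2$ directly yields Proposition~\ref{07}.
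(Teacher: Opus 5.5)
Your route differs from the paper's, and as written it breaks at exactly the step you flag as the main obstacle.

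The logarithmic difference is fine. Lemma~\ref{002} is pointwise, so after multiplying by $|x|^2\overline{w_n}$ it contributes $\lesssim\|xw_n\|_{L^2}^2$. You dropped this term, but it is harmless: a backward Gronwall from $T_n$ gives $\|xw_n(t)\|_{L^2}^2\lesssim\int_t^{\infty}e^{C(s-t)}F(s)\,\mathrm{d}s$, where the forcing $F$ is Gaussian in $s$, so the bound is uniform in $n$.

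The $g$-difference is the gap. Lemma~\ref{012} is an unweighted integrated bound. Its pointwise source, Lemma~\ref{power}, leaves the term $\int|x|^2|w_n|^{2\sigma+2}\,\mathrm{d}x$. This is not controlled by $\|xw_n\|_{L^2}$ and $\|w_n\|_{H^1}$: for $d\ge 2$ you have no $L^\infty$ bound on $w_n$ and no control of $x\nabla w_n$. To close it you need the structure of the imaginary part,
\[
\Im\Big[\big(z_1g(|z_1|^2)-z_2g(|z_2|^2)\big)\overline{(z_1-z_2)}\Big]=-\big(g(|z_1|^2)-g(|z_2|^2)\big)\,\Im\big((z_1-z_2)\overline{z_2}\big),
\]
which produces a factor $|Q|$. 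On $\{|w_n|\le |Q|/2\}$ the integrand is then $\lesssim |x|^2|w_n|^2$. On the complement it is $\lesssim |x|^2|Q|\,(|w_n|+|w_n|^{2\sigma+1})$. Since $|x|^2|Q(t)|$ is Gaussian-localized with size $(1+t)^2$, H\"older and Sobolev turn this into a Gaussian-small forcing.

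You would also have to justify that the $W_1$ flow of Theorem~\ref{100} propagates $xu_n\in L^2$ and that the weighted identity holds, e.g.\ via weights $|x|^2e^{-\varepsilon|x|^2}$. Nothing in the paper provides this. With these repairs your argument does work. It even yields Proposition~\ref{07} directly, through the compact embedding $H^1\cap L^2(|x|^2\mathrm{d}x)\hookrightarrow L^2$.

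The paper avoids weights entirely by using energy conservation, which is the idea you missed. The energy $E(u_n(\Upsilon_0))=E(Q(T_n))$ is bounded uniformly in $n$ by the Gaussian bounds of Lemma~\ref{011} and Corollary~\ref{0111}. Moreover, the part of $-\frac12\int|u|^2\log|u|^2$ coming from $\{|u|\le1\}$ equals $+\frac12\int_{|u|\le 1}|u|^2\big|\log|u|^2\big|$. Hence, at $t=\Upsilon_0$,
\[
\tfrac12\int_{|u_n|\le1}|u_n|^2\big|\log|u_n|^2\big|=E(u_n)-\tfrac12\|u_n\|_{H^1}^2+\int G(|u_n|^2)+\tfrac12\int_{|u_n|>1}|u_n|^2\log|u_n|^2 ,
\]
and every term on the right is bounded by the uniform $H^1$ bound and Sobolev. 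The small-amplitude part of the Orlicz norm thus has a good sign in the conserved energy, so no moment estimate is needed. Your approach buys a stronger conclusion, a uniform bound in $H^1\cap L^2(|x|^2\mathrm{d}x)$ and compactness, at the cost of a considerably heavier and more delicate argument.
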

\begin{proof}
Since $ u_n(\Upsilon_0) $ is uniformly bounded in $ H^1(\mathbb{R}^d) $, it remains only to prove that the logarithmic term
$$
\int_{\mathbb{R}^d} |u_n(\Upsilon_0)|^2  \bigl| \log|u_n(\Upsilon_0)|^2 \bigr|  \mathrm{d}x,
$$
is uniformly bounded with respect to $ n $.

We first remark that
$$
E(u_n(t)) = E(Q(T_n)).
$$
Thanks to Lemma~\ref{011} and Corollary \ref{0111}, it follows that the energy $ E(u_n) $ is uniformly bounded.

Recall that the energy is given by
$$
E(u_n)
= \frac{1}{2} \int |\nabla u_n|^2  \mathrm{d}x
+ \frac{1}{2} \int |u_n|^2  \mathrm{d}x
- \frac{1}{2} \int |u_n|^2 \log|u_n|^2  \mathrm{d}x
-  \int G(|u_n|^2)  \mathrm{d}x .
$$
We decompose the logarithmic term into two regions:
\begin{align*}
    \int |u_n(\Upsilon_0)|^2 \bigl| \log|u_n(\Upsilon_0)|^2 \bigr|\mathrm{d}x
&= \int_{|u_n(\Upsilon_0)|^2 \le 1} |u_n(\Upsilon_0)|^2 \bigl| \log|u_n(\Upsilon_0)|^2 \bigr|\mathrm{d}x
\\&\quad + \int_{|u_n(\Upsilon_0)|^2 > 1} |u_n(\Upsilon_0)|^2 \log|u_n(\Upsilon_0)|^2\mathrm{d}x.
\end{align*}
From the definition of the energy, we obtain
\begin{align*}
\frac12\int_{|u_n(\Upsilon_0)|^2 \le 1} |u_n(\Upsilon_0)|^2 \bigl| \log|u_n(\Upsilon_0)|^2 \bigr|
&= E(u_n(\Upsilon_0))
- \frac{1}{2} \|u_n(\Upsilon_0)\|_{H^1}^2 + \int G(|u_n(\Upsilon_0)|^2)  \mathrm{d}x  \\
&\quad + \frac12\int_{|u_n(\Upsilon_0)|^2 > 1} |u_n(\Upsilon_0)|^2 \log|u_n(\Upsilon_0)|^2\mathrm{d}x.
\end{align*}
By Lemma \ref{power} and the Sobolev embedding $ H^1(\mathbb{R}^d) \hookrightarrow L^{2\sigma+2}(\mathbb{R}^d) $, the third term is uniformly bounded.

For the last term, recall that for any $ \delta > 0 $ and any $ y > 1 $,
$$
\log(y) \le \frac{1}{\delta}  y^{\delta}.
$$
Therefore,
$$
\int_{|u_n(\Upsilon_0)|^2 > 1} |u_n(\Upsilon_0)|^2 \log|u_n(\Upsilon_0)|^2\mathrm{d}x
\lesssim \int |u_n(\Upsilon_0)|^{2+2\delta}  \mathrm{d}x.
$$
Choosing $ \delta > 0 $ sufficiently small, the Sobolev embedding yields
$$
\int |u_n(\Upsilon_0)|^{2+2\delta}  \mathrm{d}x
\lesssim \|u_n(\Upsilon_0)\|_{H^1}^{2+2\delta}.
$$
Since $ \|u_n(\Upsilon_0)\|_{H^1} $ is uniformly bounded, we conclude that
$$
\int_{\mathbb{R}^d} |u_n(\Upsilon_0)|^2  \bigl| \log|u_n(\Upsilon_0)|^2 \bigr|  \mathrm{d}x,
$$
is uniformly bounded with respect to $ n $, Thus, $u_n(\Upsilon_0)$ is bounded in $W_1(\mathbb{R}^d)$ uniformly in $n$.
\end{proof}

\begin{proof}[Proof of Proposition \ref{07}] The proof of the proposition follows exactly the same arguments as in \cite{12,15,22,25}. Since $u_n(\Upsilon_0)$ is uniformly bounded in $H^1(\mathbb{R}^d)$ , we only have to prove compactness at infinity in $L^2(\mathbb{R}^d)$. 

Choose $\delta>0$ and let $T_\delta \ge \Upsilon_0$ be such that
$$
e^{-\frac{(v_\star T_\delta)^2}{4}} \le \frac{\delta}{4}.
$$
Thanks to the assumption \eqref{as}, for any $n \in \mathbb{N}$ we have
$$
\| u_n(T_\delta) - Q(T_\delta) \|_{L^2}^2 \le \frac{\delta}{4}.
$$
By Lemma \ref{011}, we can find $\bar r_\delta > 0$ such that
$$
\int_{|x| > \bar r_\delta} |Q(T_\delta,x)|^2  \mathrm{d}x < \frac{\delta}{4}.
$$
Combining these two estimates, we get, for all $n \in \mathbb{N}$, 
$$
\int_{|x| > \bar r_\delta} |u_n(T_\delta, x)|^2  \mathrm{d}x < \frac{\delta}{2}.
$$
We want to show that there exists $r_\delta > 0$ such that, for any $n \in \mathbb{N}$,
$$
\int_{|x| > r_\delta} |u_n(\Upsilon_0, x)|^2  \mathrm{d}x < \delta.
$$
To this end, we transfer the property from $T_\delta$ to $\Upsilon_0$. Let $\hat r_\delta > 0$ be fixed later, and let $\rho \in \mathcal{C}^1(\mathbb{R}, \mathbb{R})$ be a cut-off function satisfying
$$
\rho(s) = 0 \text{ for } s < 0, \quad 
\rho(s) = 1 \text{ for } s > 1, \quad 
0 \le \rho(s) \le 1 \text{ for all } s \in \mathbb{R}.
$$
Now, define
$$
R(t) := \int |u_n(t, x)|^2  \rho\bigg( \frac{|x| - \bar r_\delta}{\hat r_\delta} \bigg)  \mathrm{d}x.
$$
After straightforward calculations, we have
$$
R'(t) = \frac{2}{\hat r_\delta} \int 
\Im \Bigl( \overline{u_n(t, x)}  \frac{x}{|x|} \cdot \nabla u_n(t, x) \Bigr) 
 \rho'\Bigl( \frac{|x| - \bar r_\delta}{\hat r_\delta} \Bigr)  \mathrm{d}x.
$$
Since $\| u_n(t) \|_{H^1(\mathbb{R}^d)}$ is bounded independently of $n$ and $t$, we define
$$
C_0 := \sup_{n, t} \| u_n(t) \|_{H^1(\mathbb{R}^d)}^2<+\infty,
$$
so that
$$
|R'(t)| \le \frac{2 C_0}{\hat r_\delta}.
$$
Now choose $\hat r_\delta$ such that
$$
\frac{2C_0}{\hat r_\delta} T_\delta < \frac{\delta}{2}.
$$
Then, by integrating from $\Upsilon_0$ to $T_\delta$, we obtain
$$
R(\Upsilon_0) - R(T_\delta) \le \frac{\delta}{2}.
$$
However,
$$
R(T_\delta) = \int |u_n(T_\delta, x)|^2  \rho\Bigl( \frac{|x| - \bar r_\delta}{\hat r_\delta} \Bigr)  \mathrm{d}x \le \frac{\delta}{2}.
$$
It follows that
$$
R(\Upsilon_0) \le \delta.
$$
Finally, we set
$$
r_\delta := \bar r_\delta + \hat r_\delta.
$$
By the definition of $\rho$, we conclude that, for all $n \in \mathbb{N}$,
$$
\int_{|x| > r_\delta} |u_n(\Upsilon_0, x)|^2  \mathrm{d}x \le R(\Upsilon_0) \le \delta.
$$
This proves the compactness argument in $L^2(\mathbb{R}^d)$: there exists $u_0 \in H^1(\mathbb{R}^d)$ such that (up to a subsequence)
$$
u_n(\Upsilon_0) \to u_0 \quad \text{in } L^2(\mathbb{R}^d) \text{ as } n \to \infty.
$$
Moreover, by Proposition \ref{cv}, we have $u_0 \in W_1(\mathbb{R}^d)$.
\end{proof}
\section*{Acknowledgments}
The authors acknowledge the support of the CDP C2EMPI, as well as the French State under the France-2030 programme, the University of Lille, the Initiative of Excellence of the University of Lille, and the European Metropolis of Lille for their funding and support of the R-CDP-24-004-C2EMPI project.
I would like to express my deepest gratitude to my supervisors, Vianney Combet, Guillaume Ferriere, and Sahbi Keraani, for their invaluable guidance, availability, and support throughout the completion of this work.

\renewcommand{\appendixpagename}{Appendix}
\renewcommand{\appendixtocname}{Appendix}

\appendix
\appendixpage
\addappheadtotoc
\setcounter{theorem}{0}

In this appendix, the first section is devoted to the interaction between the solitons  (associated with the power-type nonlinearity and the log-type nonlinearity), 
while the second section is devoted to the proof of Lemma~\ref{08}.
\section{Estimates of the interaction between the solitons}
To estimate the interaction between the solitons associated with the subcritical nonlinearity we use Lamma \ref{est1} and Lemma \ref{est2}. Before stating the result, we recall the following inequality.
\begin{lemma}\cite[p.84]{6}\label{power}
For all $z_1, z_2 \in \mathbb{C}$ and $\alpha \ge 0$, we have
$$
| z_1 |z_1|^{\alpha} - z_2 |z_2|^{\alpha} | \le (\alpha + 1) \big( |z_1|^{\alpha} + |z_2|^{\alpha} \big) |z_1 - z_2|.
$$
Moreover,
$$\abs{(z_1g(\abs{z_1}^{2})-z_2g(\abs{z_2}^{2}))\overline{(z_1 - z_2)} }\lesssim (\abs{z_2}^{2\sigma}+1)\abs{z_1-z_2}^2+\abs{z_1-z_2}^{2\sigma+2},$$

$$
| z_1 g(|z_1|^2) - z_2 g(|z_2|^2) | \lesssim \big( 1+|z_1|^{2\sigma} + |z_2|^{2\sigma} \big) |z_1 - z_2|,
$$ 
and 
$$\abs{G(|z_1|^2)-G(|z_2|^2)}\lesssim \abs{\abs{z_1}^2-\abs{z_2}^2}+\abs{\abs{z_1}^{2\sigma+2}-\abs{z_2}^{2\sigma+2}}.$$
\end{lemma}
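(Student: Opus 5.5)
The first inequality is the classical one from Cazenave. I would prove it by writing $f(z)=z|z|^\alpha$ and integrating along the segment $z_2+\tau(z_1-z_2)$, $\tau\in[0,1]$. The real differential of $f$ at $z\neq 0$ has operator norm at most $(\alpha+1)|z|^\alpha$. Moreover $|z_2+\tau(z_1-z_2)|^\alpha\le \max(|z_1|,|z_2|)^\alpha\le |z_1|^\alpha+|z_2|^\alpha$. The mean value inequality then gives the bound. When the segment passes through $0$, or when $\alpha<1$ makes $f$ only $C^1$ away from the origin, I would handle it by continuity or by splitting the segment at the origin; $f$ is locally Lipschitz, so nothing is lost.

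For the remaining inequalities I would first derive pointwise bounds on $g$ from \ref{A1}--\ref{A2}. Set $h(z)=zg(|z|^2)$. Its real derivative is bounded by $|g(|z|^2)|+2|z|^2|g'(|z|^2)|$.
\begin{itemize}
\item On $s\in(0,1]$: $g$ is continuous on $[0,1]$ and $sg'(s)\to0$, so $sg'(s)$ is bounded there and $|g(s)|$ is bounded.
\item On $s\ge1$: integrating $|g'(s)|\le Cs^{\sigma-1}$ gives $|g(s)|\le C(1+s^\sigma)$ and $s|g'(s)|\le Cs^\sigma$.
\end{itemize}
Hence $|Dh(z)|\lesssim 1+|z|^{2\sigma}$ for all $z\neq0$. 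Applying the same segment argument as above yields the third estimate,
\[
|h(z_1)-h(z_2)|\lesssim (1+|z_1|^{2\sigma}+|z_2|^{2\sigma})|z_1-z_2|.
\]
The second estimate follows by multiplying by $|z_1-z_2|$ and using $|z_1|^{2\sigma}\lesssim |z_2|^{2\sigma}+|z_1-z_2|^{2\sigma}$, which is valid for any $\sigma>0$ with a constant $2^{2\sigma}$.

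For the last estimate, $G(y)=\frac12\int_0^y g$. The bound $|g(s)|\lesssim 1+s^\sigma$ gives
\[
|G(a)-G(b)|\lesssim |a-b|+\Big|\int_b^a s^\sigma\,\mathrm{d}s\Big| \lesssim |a-b|+|a^{\sigma+1}-b^{\sigma+1}|.
\]
Taking $a=|z_1|^2$ and $b=|z_2|^2$ gives exactly the stated form.

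The main obstacle is only the non-smoothness of $h$ at $z=0$, where $g$ is merely $C^0$. I would handle it with the flatness assumption $sg'(s)\to0$ from \ref{A1}, which makes $|Dh|$ bounded near $0$. Then $h$ is Lipschitz on bounded sets, and the mean value inequality applies along any segment, including through the origin, by approximation.
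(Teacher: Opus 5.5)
Your proof is correct: the mean-value inequality along the segment $z_2+\tau(z_1-z_2)$ gives the first bound with the exact constant $(\alpha+1)$, and the bounds $|g(s)|\lesssim 1+s^\sigma$ and $s|g'(s)|\lesssim 1+s^\sigma$ that you extract from \ref{A1}--\ref{A2} yield the remaining three. (Incidentally, $h(z)=zg(|z|^2)$ is in fact $\mathcal{C}^1$ on all of $\mathbb{C}$ with $Dh(0)=0$, so no approximation argument at the origin is needed.) The paper gives no proof of this lemma beyond the citation to Cazenave; your argument is the standard one behind that reference, with the $g$-dependent bounds spelled out, which the paper leaves implicit.
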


\begin{lemma}\label{est5}
    For $t$ sufficiently large, we have
$$
\bigl\| Q(t)g(|Q(t)|^{2}) - \sum_{j=1}^N Q_j(t)g(|Q_j(t)|^{2}) \bigr\|_{L^2} 
\lesssim \exp\!\Biggl( -\frac{(v_\star (t-\Upsilon_1))^2}{8} \Biggr).
$$
\end{lemma}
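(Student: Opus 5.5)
We have to bound $\|Q g(|Q|^2)-\sum_j Q_j g(|Q_j|^2)\|_{L^2}$. The plan is to use the partition of unity $(\psi_k)_{0\le k\le N}$ and split space accordingly. Write
\[
Q g(|Q|^2)-\sum_{j} Q_j g(|Q_j|^2)=\sum_{k=0}^N \psi_k\Big(Q g(|Q|^2)-\sum_{j} Q_j g(|Q_j|^2)\Big).
\]

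On the support of $\psi_k$ with $k\ge1$, $Q$ is close to $Q_k$. Write
\[
Qg(|Q|^2)-\sum_j Q_jg(|Q_j|^2)=\big(Qg(|Q|^2)-Q_kg(|Q_k|^2)\big)-\sum_{j\neq k}Q_jg(|Q_j|^2).
\]
For the first bracket, the third inequality of Lemma \ref{power} with $z_1=Q$, $z_2=Q_k$ gives
\[
|Qg(|Q|^2)-Q_kg(|Q_k|^2)|\lesssim (1+|Q|^{2\sigma}+|Q_k|^{2\sigma})\Big|\sum_{j\ne k}Q_j\Big|\lesssim \sum_{j\neq k}|Q_j|,
\]
since every $Q_j$ is uniformly bounded in $L^\infty$ (Lemma \ref{011}). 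For the second bracket, the same inequality with $z_2=0$ and $g(0)=0$ yields $|Q_jg(|Q_j|^2)|\lesssim |Q_j|$. Hence on $\operatorname{supp}\psi_k$ the integrand is pointwise bounded by $\sum_{j\ne k}|Q_j|$. Its $L^2(\psi_k\,\mathrm{d}x)$ norm is then controlled by $\sum_{j\neq k}\|Q_j\|_{L^2(\psi_k\mathrm{d}x)}$. Since $0\le\psi_k\le1$, we have $\psi_k^2\le\psi_k$, so this is legitimate. By Lemma \ref{est2} this quantity is $\lesssim (t-\Upsilon_1)^{-3}e^{-(v_\star(t-\Upsilon_1))^2/8}$.

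On $\operatorname{supp}\psi_0$, every soliton is in its tail. Apply the same pointwise bound with $z_2=0$ to both $Qg(|Q|^2)$ and each $Q_jg(|Q_j|^2)$; the integrand is then $\lesssim\sum_j|Q_j|$. Since $\psi_0\le 1-\psi_j$, Lemma \ref{est1} (or Lemma \ref{est2} with $k=0$) again gives the bound $e^{-(v_\star(t-\Upsilon_1))^2/8}$.

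Summing the $N+1$ pieces with the triangle inequality concludes, for $t\ge\Upsilon_1+\Upsilon_*$ (this is where "$t$ sufficiently large" enters, so that Lemmas \ref{est1} and \ref{est2} apply).

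The only delicate point is the local Lipschitz bound $|zg(|z|^2)|\lesssim |z|$ for bounded $z$, which must hold uniformly. This requires $g$ to be bounded on bounded sets. That follows from \ref{A1}: $g$ is continuous on $[0,\infty)$ with $g(0)=0$. One also needs the factor $1+|z_1|^{2\sigma}+|z_2|^{2\sigma}$ to stay bounded, which is exactly what the uniform $L^\infty$ bound on $Q$ provides. In contrast with the logarithmic term, no singularity at $0$ arises here, so the estimate is essentially linear in the tails of the $Q_j$.
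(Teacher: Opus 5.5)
Your proposal is correct and follows the paper's proof. Both arguments use the partition of unity, the pointwise bound $\lesssim \sum_{j\neq k}|Q_j|$ obtained from Lemma \ref{power} together with the uniform $L^\infty$ bound on the solitons, and then close with Lemma \ref{est2}. Your explicit treatment of the $\psi_0$ piece (bounding the integrand by $\sum_j|Q_j|$ and using $\psi_0\le 1-\psi_j$) fills in the case the paper skips when it invokes $\sum_k\psi_k=1$ after handling only $k\ge 1$.
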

\begin{proof}
    Let $k \in \mathbb{N}^*$. Then, applying Lemma~\ref{power}, we obtain
\begin{align*}
    \bigl| Q(t,x) g(|Q(t,x)|^{2}) &- \sum_{j=1}^N Q_j(t,x) g(|Q_j(t,x)|^{2}) \bigr|
\\&\leq\abs{Q(t,x) g(|Q(t,x)|^{2})-Q_k(t,x) g(|Q_k(t,x)|^{2})}+\sum_{j\neq k}\abs{Q_j(t,x)g(\abs{Q_j(t,x)}^2)}
\\&\lesssim \sum_{j \neq k} |Q_j(t,x)| \Bigl( 1+|Q(t,x)|^{2\sigma} + |Q_k(t,x)|^{2\sigma} + \abs{g(|Q_j(t,x)|^{2})} \Bigr)
\\&\lesssim \sum_{j \neq k} |Q_j(t,x)|.
\end{align*}
Hence by Lemma \ref{est2},
$$
\bigl\| \psi_k(t) \bigl(Q(t) g(|Q(t)|^{2}) - \sum_{j=1}^N Q_j(t) g(|Q_j(t)|^{2}) \bigr) \bigr\|_{L^2}
\le \sum_{j \neq k} \| \psi_k(t) |Q_j(t)| \|_{L^2}
\lesssim \exp\!\Biggl( -\frac{(v_\star (t-\Upsilon_1))^2}{8} \Biggr).
$$
Using $\sum_k \psi_k(t) = 1$, we deduce that
\begin{equation*}
    \bigl\| Q(t) g(|Q(t)|^{2}) - \sum_{j=1}^N Q_j(t) g(|Q_j(t)|^{2}) \bigr\|_{L^2}
\lesssim \exp\!\Biggl( -\frac{(v_\star (t-\Upsilon_1))^2}{8} \Biggr).\qedhere
\end{equation*}
\end{proof}
\begin{lemma}\cite[Lemma 3.3]{16}\label{liplog}
For all $z_1,z_2\in \mathbb{C}$ such that $|z_1|, |z_2| \le 1$ and $z_1\neq 0$, we have
    $$
\bigl| z_1\log |z_1| - z_2\log |z_2| \bigr| \lesssim (1 - \log |z_1|) |z_1 - z_2|.
$$
\end{lemma}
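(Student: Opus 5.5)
The idea is to split according to whether $z_2$ is close to $z_1$ relative to the size of $|z_1|$. When it is close, a mean value argument applies along the segment $[z_1,z_2]$, which then stays away from the singularity at $0$. When it is far, each term is bounded separately using the monotonicity of $s\mapsto s|\log s|$ near $0$. Throughout, set $f(z):=z\log|z|$ (with $f(0)=0$) and $\delta:=|z_1-z_2|$, and note that $-\log|z_1|\ge 0$ since $|z_1|\le 1$.

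\emph{Case 1: $\delta<|z_1|/2$.} For every $z$ on the segment $[z_1,z_2]$ we have $|z_1|/2\le |z|\le \tfrac32|z_1|\le \tfrac32$, so the segment avoids $0$. On $\mathbb{C}\setminus\{0\}$, $f$ is $\mathcal{C}^1$ as a function of $(\Re z,\Im z)$, and a direct computation gives $|Df(z)|\le 1+|\log|z||$. On the segment this yields
$$
|\log|z||\le \max\bigl(\log 2-\log|z_1|,\ \log\tfrac32\bigr)\le \log 2+(-\log|z_1|),
$$
so $|Df(z)|\lesssim 1-\log|z_1|$. The mean value inequality along the segment then gives $|f(z_1)-f(z_2)|\lesssim(1-\log|z_1|)\,\delta$.

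\emph{Case 2: $\delta\ge|z_1|/2$.} We bound the two terms separately.

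For the first term, $|f(z_1)|=|z_1|(-\log|z_1|)\le 2\delta(-\log|z_1|)$.

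For the second term, we have $|z_2|\le|z_1|+\delta\le 3\delta$. We use that $h(s)=s|\log s|$ is increasing on $[0,1/e]$ and satisfies $h\le 1/e$ on $[0,1]$.
\begin{itemize}
\item If $3\delta>1/e$, then $|f(z_2)|\le 1/e<3\delta$.
\item If $3\delta\le 1/e$, then monotonicity gives $|f(z_2)|\le h(3\delta)=3\delta(-\log 3\delta)\le 3\delta(-\log\delta)$. Since $\delta\ge|z_1|/2$, we get $-\log\delta\le \log 2-\log|z_1|$, and hence $|f(z_2)|\lesssim \delta(1-\log|z_1|)$.
\end{itemize}
The case $z_2=0$ is included trivially.

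Summing the two bounds gives the claim in Case 2, which completes the argument.

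There is no serious obstacle. The only point needing care is Case 2 when $|z_2|$ is much smaller than $\delta$. There the factor $|\log|z_2||$ could be large, and it is exactly the monotonicity of $h$ that converts it into $|\log\delta|$. That quantity is in turn controlled by $-\log|z_1|$ precisely because we are in the regime $\delta\gtrsim|z_1|$.
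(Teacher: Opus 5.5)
Your proof is correct and complete. In Case 1 the segment stays in the annulus $|z_1|/2\le|z|\le\frac32|z_1|$, where $|Df(z)|\le 1+|\log|z||\le 1+\log 2-\log|z_1|$. In Case 2, the monotonicity of $s\mapsto s|\log s|$ on $[0,1/e]$, combined with $\delta\ge|z_1|/2$, turns the potentially large factor $|\log|z_2||$ into $\log 2-\log|z_1|$. The subcase $3\delta>1/e$ is also fine, since $1-\log|z_1|\ge 1$.

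The paper gives no proof of this lemma; it imports it from \cite{16}, so there is no internal argument to compare against.

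For comparison, there is a shorter route without a case split. It starts from the identity
\[
z_1\log|z_1|-z_2\log|z_2|=(z_1-z_2)\log|z_1|+z_2\bigl(\log|z_1|-\log|z_2|\bigr).
\]
Set $a=|z_1|$ and $x=|z_2|$. Two scalar bounds control the second term:
\begin{itemize}
\item $x\log(a/x)\le a-x$ for $0<x\le a$, which follows from $\log y\le y-1$;
\item $x\log(x/a)\le(1-\log a)(x-a)$ for $a\le x\le 1$, since the difference vanishes at $x=a$ and has derivative $-\log x\ge 0$.
\end{itemize}
Together they give the explicit constant $2$:
\[
\bigl|z_1\log|z_1|-z_2\log|z_2|\bigr|\le 2(1-\log|z_1|)\,|z_1-z_2|.
\]
That route relies on the product structure of $z\log|z|$. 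Your near/far splitting is more robust: it only uses a derivative bound $|Df(z)|\lesssim 1+|\log|z||$ away from $0$ and a monotone majorant of $|f(z)|$ near $0$. It would therefore carry over unchanged to other nonlinearities with a logarithmic singularity at the origin.
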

In \cite{15}, the previous lemma is used, together with the explicit Gausson, to obtain interaction estimates for the logarithmic nonlinearity. In our case, we do not have an explicit soliton, however, by using the lower bound obtained in Lemma~\ref{011}, we can derive the same estimate for the interactions arising from the logarithmic nonlinearity. 
\begin{lemma}\label{logest}For any $j\in\{1,...,N\}$ and all $x\in \mathbb{R}^d$ as in $t\to +\infty$, we have 
     $$\abs{Q(t,x)\log\abs{Q(t,x)}^2-\sum_{k=1}^NQ_k(t,x)\log\abs{Q_k(t,x)}^2} \lesssim \sum_{k\neq j}^N|Q_k(t,x)|\abs{1+(t-\Upsilon_1)^2+\abs{x-x_k^*(t)}^2}.$$
\end{lemma}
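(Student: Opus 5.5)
The plan is to split the difference into one term where $Q$ is compared with $Q_j$ and a sum of isolated terms:
\[
Q\log|Q|^2-\sum_{k}Q_k\log|Q_k|^2=\bigl(Q\log|Q|^2-Q_j\log|Q_j|^2\bigr)-\sum_{k\neq j}Q_k\log|Q_k|^2 .
\]
We fix $j$ and then estimate these two pieces pointwise.

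\emph{The sum over $k\neq j$.} Lemma \ref{011} gives the two-sided bound
\[
C_1^{(k)}e^{-\delta^{(k)}|x-x_k^*(t)|^2}\le|Q_k(t,x)|\le C_2^{(k)} .
\]
Hence $|\log|Q_k|^2|\lesssim 1+|x-x_k^*(t)|^2$, and so
\[
|Q_k\log|Q_k|^2|\lesssim|Q_k|\,\bigl(1+|x-x_k^*(t)|^2\bigr).
\]
This is already of the required form.

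\emph{The term $Q\log|Q|^2-Q_j\log|Q_j|^2$.} We first normalize as in the proof of Lemma \ref{55}, setting $\widetilde Q=C_NQ$ and $\widetilde Q_j=C_NQ_j$, so that both have modulus at most $1$. This gives
\[
Q\log|Q|^2-Q_j\log|Q_j|^2=C_N^{-1}\Bigl(2\bigl(\widetilde Q\log|\widetilde Q|-\widetilde Q_j\log|\widetilde Q_j|\bigr)-\log(C_N^2)\,(\widetilde Q-\widetilde Q_j)\Bigr).
\]
The second term is bounded by $\sum_{k\neq j}|Q_k|$. For the first term we apply Lemma \ref{liplog} with $z_1=\widetilde Q_j$, which is nonzero since $\Phi_{\omega_j}>0$. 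This yields
\[
\bigl|\widetilde Q\log|\widetilde Q|-\widetilde Q_j\log|\widetilde Q_j|\bigr|\lesssim\bigl(1-\log|\widetilde Q_j|\bigr)\,|Q-Q_j| .
\]
By the lower bound of Lemma \ref{011}, $1-\log|\widetilde Q_j|\lesssim 1+|x-x_j^*(t)|^2$, while $|Q-Q_j|\le\sum_{k\neq j}|Q_k|$.

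\emph{Rewriting the weight (main obstacle).} The weight produced above is centred at $x_j^*(t)$, but the statement requires weights centred at $x_k^*(t)$. We use
\[
|x-x_j^*(t)|^2\le 2|x-x_k^*(t)|^2+2|x_k^*(t)-x_j^*(t)|^2 .
\]
Since $x_k^*(t)-x_j^*(t)=x_k-x_j+(v_k-v_j)t$ grows linearly, $|x_k^*(t)-x_j^*(t)|^2\lesssim 1+(t-\Upsilon_1)^2$ for $t$ large. Collecting the three contributions gives
\[
\Bigl|Q\log|Q|^2-\sum_kQ_k\log|Q_k|^2\Bigr|\lesssim\sum_{k\neq j}|Q_k|\bigl(1+(t-\Upsilon_1)^2+|x-x_k^*(t)|^2\bigr),
\]
which is the claimed bound. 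This step is only a triangle inequality, but it is exactly where the factor $(t-\Upsilon_1)^2$ in the statement comes from.

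The one substantive point is that $\log|Q_j|$ may be controlled polynomially even though $Q$ itself could vanish. This works because the bound is arranged so that the logarithm is only ever taken of $Q_j$, which is strictly positive, and the lower Gaussian bound of Lemma \ref{011} then does the work.
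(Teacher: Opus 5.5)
Your proof is correct and follows essentially the same route as the paper. You split off $Q\log|Q|^2-Q_j\log|Q_j|^2$, normalize by $C_N$ and apply Lemma~\ref{liplog} so that the logarithm falls on the nonvanishing $Q_j$. You bound the isolated terms $Q_k\log|Q_k|^2$ through the two-sided bounds of Lemma~\ref{011}, then recentre the weight from $x_j^*(t)$ to $x_k^*(t)$ by the triangle inequality, which produces the $(t-\Upsilon_1)^2$ factor. The only cosmetic difference is that you normalize just the $Q$-versus-$Q_j$ piece and track the extra term $\log(C_N^2)(\widetilde Q-\widetilde Q_j)$ explicitly. The paper instead rescales the whole expression, which is exact because $Q=\sum_k Q_k$.
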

\begin{proof}
   Let us normalize $Q$ and $Q_j$ in order to apply Lemma~\ref{liplog}. Set
$$
C_N := \bigl( N \max_j C_2^{(j)} \bigr)^{-1},
$$
where $C_2^{(j)}$ is the upper constant defined in Lemma \ref{011} associated with $Q_j$, so that
$$
C_N |Q(t,x)| \leq 1 \quad \text{ and } \quad C_N |Q_j(t,x)| \leq 1.
$$
Now define
$$
\widetilde Q(t,x) := C_N Q(t,x),
\qquad
\widetilde Q_j(t,x) := C_N Q_j(t,x) .
$$
And we have 
$$
C_N \Bigl| Q(t,x) \log |Q(t,x)|^2 - \sum_{k=1}^N Q_k(t,x) \log |Q_k(t,x)|^2 \Bigr|
=
\Bigl| \widetilde Q(t,x) \log |\widetilde Q(t,x)|^2
- \sum_{k=1}^N \widetilde Q_k(t,x) \log |\widetilde Q_k(t,x)|^2 \Bigr|.
$$
We can apply Lemma~\ref{liplog}, and we obtain
\begin{align*}
\Bigl| Q(t,x) \log |Q(t,x)|^2 &- \sum_{k=1}^N Q_k(t,x) \log |Q_k(t,x)|^2 \Bigr|
\\&= C_N^{-1}\Bigl| \tilde{Q}(t,x) \log |\tilde{Q}(t,x)|^2 - \sum_{k=1}^N \tilde{Q}_k(t,x) \log |\tilde{Q}_k(t,x)|^2 \Bigr|
\\
&\lesssim
\bigl| \tilde{Q}(t,x) \log |\tilde{Q}(t,x)|^2 - \tilde{Q}_j(t,x) \log |\tilde{Q}_j(t,x)|^2 \bigr|
+ \sum_{k \neq j} |\tilde{Q}_k(t,x)|  |\log |\tilde{Q}_k(t,x)|^2| \\
&\lesssim
\sum_{k \neq j} |Q_k(t,x)|  |1 - \log |Q_j(t,x)|^2|
+  \sum_{k \neq j} |Q_k(t,x)|\Big(|x - x_k^*(t)|^2+1\Big) \\
&\lesssim
\sum_{k \neq j} |Q_k(t,x)|
\Big( 1 + |x - x_k^*(t)|^2 + |x - x_j^*(t)|^2 \Big) \\
&\lesssim
\sum_{k \neq j} |Q_k(t,x)|
\Big( 1 + (t-\Upsilon_1)^2 + |x - x_k^*(t)|^2 \Big).
\end{align*}
In the last inequality, we used the estimate
\begin{equation*}
     |x - x_j^*(t)|
\le |x - x_k^*(t)| + |x_j^*(t) - x_k^*(t)|
\le |x - x_k^*(t)| + |x_j - x_k| + |v_j - v_k| t.\qedhere
\end{equation*}
\end{proof}
\begin{lemma}\label{est6}
For $t$ sufficiently large, we have
$$
\bigl\| Q(t) \log|Q(t)|^2 - \sum_{j=1}^N Q_j(t) \log|Q_j(t)|^2 \bigr\|_{L^2}
\lesssim \exp\!\Biggl( -\frac{(v_\star (t-\Upsilon_1))^2}{8} \Biggr).
$$
\end{lemma}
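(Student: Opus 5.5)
We prove the bound by the same partition-of-unity argument used for Lemma~\ref{est5}, now with Lemma~\ref{logest} as the pointwise input. Set
$$R(t,x) := Q(t,x)\log|Q(t,x)|^2-\sum_{k=1}^N Q_k(t,x)\log|Q_k(t,x)|^2 .$$
Since $\sum_{j=0}^N\psi_j(t)=1$ and $0\le\psi_j\le 1$, we have
$$\|R(t)\|_{L^2}\le\sum_{j=0}^N\|\psi_j(t)R(t)\|_{L^2}.$$
It therefore suffices to bound each localized piece by $e^{-(v_\star(t-\Upsilon_1))^2/8}$.

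\textbf{Pieces $j\ge1$.} We apply Lemma~\ref{logest} with this index $j$, which gives
$$|\psi_jR|\lesssim\sum_{k\neq j}\psi_j|Q_k|\bigl(1+(t-\Upsilon_1)^2+|x-x_k^*(t)|^2\bigr).$$
Taking $L^2$ norms reduces matters to two terms:
$$(1+(t-\Upsilon_1)^2)\,\|Q_k(t)\|_{L^2(\psi_j(t)\mathrm{d}x)} \quad\text{and}\quad \|Q_k(t)|x-x_k^*(t)|^2\|_{L^2(\psi_j(t)\mathrm{d}x)}.$$
We would like Lemma~\ref{est2} to give the first term the bound $(t-\Upsilon_1)^{-3}e^{-(v_\star(t-\Upsilon_1))^2/8}$. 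That lemma is stated with weight $\psi_k\,\mathrm{d}x$, though, so we use $\psi_j^2\le\psi_j\le 1-\psi_k$ and the $(1-\psi_k)\,\mathrm{d}x$ estimates of Lemma~\ref{est1}. The polynomial factor $(t-\Upsilon_1)^2$ is then absorbed by $(t-\Upsilon_1)^{-3}$. The second term is bounded by Lemma~\ref{est1} directly, again using $\psi_j\le1-\psi_k$.

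\textbf{Piece $j=0$.} Lemma~\ref{logest} holds for any choice of $j$, so we fix $j=1$ and bound $|R|\psi_0$ by $\sum_{k\neq1}\psi_0|Q_k|(\dots)$. Since $\psi_0\le 1-\psi_k$ for every $k$, the same estimates from Lemma~\ref{est1} apply.

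\textbf{Main obstacle.} The delicate point is Lemma~\ref{logest} itself, which is what lets us avoid having the explicit Gausson. Its proof uses the lower bound $|Q_j|\ge C_1e^{-\delta|x-x_j^*|^2}$ from Lemma~\ref{011} to control $|\log|Q_j||\lesssim 1+|x-x_j^*|^2$. That quadratic weight is the source of the factor $(t-\Upsilon_1)^2$, and it is compensated only because the tail estimates of Lemma~\ref{est1} carry the extra decay $(t-\Upsilon_1)^{-3}$. We must also check that the moment estimates of Lemma~\ref{est1} come with the Gaussian factor $e^{-(v_\star(t-\Upsilon_1))^2/8}$. This follows from Lemma~\ref{gausse}, because on the support of $1-\psi_j$ we have $|x-x_j^*|\ge v_\star(t-\Upsilon_1)/2+1$ and $r_0$ is absorbed into $\Upsilon_1$. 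Taking $t$ large so that all these polynomial factors are at most $1$, and summing over $j$, gives the stated estimate.
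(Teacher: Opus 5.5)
Your argument is correct and is essentially the paper's proof. You localize with the partition of unity, apply Lemma~\ref{logest} on each $\psi_j$-piece with $j$ the localization index, and bound $(1+(t-\Upsilon_1)^2)\|Q_k\|_{L^2(\psi_j\,\mathrm{d}x)}$ and $\||x-x_k^*|^2Q_k\|_{L^2(\psi_j\,\mathrm{d}x)}$ by Lemmas~\ref{est1}--\ref{est2}. You also handle the $\psi_0$-piece more explicitly than the paper does. Your detour through $\psi_j\le 1-\psi_k$ is harmless but unnecessary, since Lemma~\ref{est2} is exactly that estimate once its dummy indices are renamed.

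One aside is inaccurate: $r_0$ is not ``absorbed into $\Upsilon_1$''. With $\psi_j$ as defined, the cut-off radius $v_\star(t-\Upsilon_1)/2+1$ and the target rate both depend only on $t-\Upsilon_1$, so Lemma~\ref{gausse} only gives a tail $\lesssim e^{-(v_\star(t-\Upsilon_1)/2+1-r_0)^2}$, up to polynomial factors. Fixing this would require enlarging the radius of $\psi_j$ by $r_0$, which the $2r_0$ in $\Upsilon_1$ keeps compatible with disjoint supports. This is an issue with Lemma~\ref{est1} itself, which you, like the paper, take as given, so your derivation of the statement is unaffected.
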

\begin{proof}
    Thanks to Lemma \ref{logest}
    \begin{align*}
        \abs{Q(t,x)\log\abs{Q(t,x)}^2-\sum_{k=1}^NQ_k(t,x)\log\abs{Q_k(t,x)}^2} \lesssim \sum_{k\neq j}^N|Q_k(t,x)|\abs{1+(t-\Upsilon_1)^2+\abs{x-x_k^*(t)}^2}.
    \end{align*}
    Taking the $L^2(\psi_k(t)\mathrm{d}x)$-norm, we obtain
    \begin{align*}
        \norm{Q(t)\log\abs{Q(t)}^2-\sum_{k=1}^NQ_k(t)\log\abs{Q_k(t)}^2}_{L^2(\psi_k(t)\mathrm{d}x)}&\lesssim(1+(t-\Upsilon_1)^2) \sum_{k\neq j}^N\norm{Q_k(t)}_{L^{2}(\psi_k(t))\mathrm{d}x}\\&\quad+\sum_{k\neq j}^N \norm{\abs{x-x_k^*(t)}^2Q_k(t)}_{L^2(\psi_k(t)\mathrm{d}x)}.
    \end{align*}
The first term is estimated by the first estimate in Lemma~\ref{est2}, while the second is controlled by the last estimate in Lemma~\ref{est1}, and using $\sum_{k=0}^N \psi_k(t) = 1$ we get the conclusion.
\end{proof}
\section{Proof of Lemma \ref{est1} and Lemma \ref{est2}}
\begin{proof}[Proof of Lemma \ref{est1}]
By the definition of partition of unity, we have
$$
\psi_j(t,x) = 1 \quad \text{for } x \in B_j(t):=B\bigg(x_j^*(t), \frac{v_\star (t-\Upsilon_1)}{2} + 1\bigg).
$$
Therefore, the first quantity in the first estimate can be controlled outside $B_j(t)$.\\
Then, setting
$
B_0(t) := B(0, \frac{v_\star (t-\Upsilon_1)}{2} + 1),
$
we can easily compute for all $t\geq \Upsilon_1+\Upsilon_*$ and $\Upsilon_*$ large enough
\begin{align*}
\int_{B_j(t)} |Q_j(t)|^2 (1 - \psi_j(t))  \mathrm{d}x 
&= \int_{B_j^c(t)} |Q_j(t)|^2  \mathrm{d}x \\
&\lesssim \int_{B_j^c(t)} \exp\Bigl(-(|x - x_j^*(t)|-r_0)^2\Bigr)  \mathrm{d}x \\
&\lesssim \int_{B_0^c(t)} \exp\Bigl(-(|y|-r_0)^2\Bigr)  dy \\
&\lesssim \bigg(\frac{v_\star (t-\Upsilon_1)}{2} + 1\bigg)^{d-2}e^{-v_\star(t-\Upsilon_1)}\exp\Bigl(-\frac{(v_\star (t - \Upsilon_1))^2}{4}\Bigr)\\
&\lesssim \Upsilon_*^{-6} \exp\Bigl(-\frac{(v_\star (t - \Upsilon_1))^2}{4}\Bigr).
\end{align*}
In the last inequality, we used Lemma \ref{gausse} with $r=\frac{v_\star (t-\Upsilon_1)}{2} + 1$. For the last two terms, we just need to observe that
$
\partial_x \psi_k(t,x) = - \partial_x \bigl(1 - \psi_k(t,x)\bigr),
$
which is why the case $k = j$ is also included. 
\end{proof}

\begin{proof}[Proof of Lemma \ref{est2}]
The first two estimates can be deduced in the same way as in the proof of Lemma \ref{est1}. The last estimate can be deduced similarly to \cite{15}: instead of using an explicit expression for the soliton (such as the Gaussons in~\cite{15}), we only use the lower and upper bounds proved in Lemma~\ref{011}. Thus, we have \begin{equation}
    - \lvert x - x_j^*(t) \rvert^2
\lesssim
 \log \lvert Q_j(t) \rvert^2 
\lesssim
- (\lvert x - x_j^*(t) \rvert-r_0)^2.
\end{equation}
Observe that for $k \neq j$ we have $
\psi_k \le 1 - \psi_j
$. Then, 
\begin{align*}
    \bigl\| Q_j(t) \log |Q_j(t)|^2 \bigr\|_{L^2((1-\psi_j(t))\mathrm{d}x)}
+\bigl\| Q_j(t) \log |Q_j(t)|^2 \bigr\|_{L^2(\psi_k(t)\mathrm{d}x)}&\lesssim
\bigl\| Q_j(t) |x - x_j^*(t)|^2 \bigr\|_{L^2((1-\psi_j(t))\mathrm{d}x)}
\end{align*}
By Lemma \ref{est1}, we then obtain the desired conclusion.
\end{proof}

\setcounter{section}{2} 
\renewcommand{\thesection}{\arabic{section}}

\section*{Declaration of competing interest}
No competing interest.

\end{document}